\newtheorem{theorem}{Theorem}[section]
\newtheorem{lemma}[theorem]{Lemma}
\newtheorem{proposition}[theorem]{Proposition}
\newtheorem{definition}[theorem]{Definition}
\newtheorem{remark}[theorem]{Remark}
\numberwithin{equation}{section}
\newcommand{\N}{\mathbb{N}}
\newcommand{\R}{\mathbb{R}}
\newcommand{\Rn}{\mathbb{R}^{n}}
\newcommand{\dif}{\mathrm{d}}
\newcommand{\BBV}{\mathbb{B}_{\mathbb{V}}}
\newcommand{\Dy}{\mathscr{D}}
\newcommand{\epy}{\bm{\varepsilon}}
\newcommand{\BV}{\mathrm{BV}}
\newcommand{\ONE}{\mathbf{1}}
\newcommand{\restrict}{\begin{picture}(10,8)\put(2,0){\line(0,1){7}}\put(1.8,0){\line(1,0){7}}\end{picture}}
\newcommand{\lebn}{\mathscr{L}^{n}}
\newcommand{\lip}{\operatorname{lip}}
\newcommand{\CC}{\operatorname{C}}
\newcommand{\WW}{\operatorname{W}}
\newcommand{\hold}{\operatorname{C}}
\newcommand{\lebe}{\operatorname{L}}
\newcommand{\LIP}{\operatorname{LIP}}
\newcommand{\sobo}{\operatorname{W}}
\newcommand{\bv}{\operatorname{BV}}
\newcommand{\wstar}{\stackrel{*}{\rightharpoonup}}
\newcommand{\toY}{\overset{\mathrm{Y}}{\to}}
\newcommand{\dd}{\mathrm{d}}
\newcommand{\cala}{\mathscr{A}}
\newcommand{\bala}{\mathscr{B}}
\newcommand{\calA}{\mathcal{A}}
\newcommand{\V}{\mathbb{V}}
\newcommand{\W}{\mathbb{W}}
\newcommand{\HH}{\mathbb{H}}
\newcommand{\Meas}{\mathcal{M}^{+}_{1}}
\newcommand{\SV}{\mathbb{S}_{\V}}
\newcommand{\X}{\mathbb{X}}
\newcommand{\U}{\mathbb{U}}
\def\Xint#1{\mathchoice
   {\XXint\displaystyle\textstyle{#1}}%
   {\XXint\textstyle\scriptstyle{#1}}%
   {\XXint\scriptstyle\scriptscriptstyle{#1}}%
   {\XXint\scriptscriptstyle\scriptscriptstyle{#1}}%
   \!\int}
\def\XXint#1#2#3{{\setbox0=\hbox{$#1{#2#3}{\int}$}
     \vcenter{\hbox{$#2#3$}}\kern-.5\wd0}}
\def\dashint{\Xint-}
\begin{document}
\title[Oscillation and concentration in sequences of
PDE constrained measures]{Oscillation and concentration in sequences of \\
	PDE constrained measures}

\author[J.~Kristensen]{Jan Kristensen}
\author[B.~Rai\cb{t}\u{a}]{Bogdan Rai\cb{t}\u{a}}

\begin{abstract}
We show that for constant rank partial differential operators $\cala$, generalized Young measures generated by sequences
of $\cala$-free measures can be characterized by duality with $\cala$-quasiconvex integrands of 
linear growth.
\end{abstract}

\maketitle

\vspace{-8.5pt}
\section{Introduction}

Since their introduction in \cite{Y}, Young measures have been used to effectively describe oscillation phenomena in non-convex problems in the
calculus of variations and optimal control theory and, later, in non-linear partial differential equations \cite{Tartar1,Tartar3,DiPerna}. In
more general terms \cite{DPM,AlBo,balder,BeLa,Sychev}, one can think of Young measures as wieldy tools to describe the gap between weak and strong convergence in the
interaction with non-linear functionals. More precisely, Young measures provide the one-point statistics for oscillation and (aspects of) concentration
in a sequence and thus allows the computation of its moments.

The interaction of weak convergence and non-linear quantities represents a highly non-trivial problem that can be investigated from many angles.
The classical results in \cite{MorreyB,Reshetnyak,Ball} show that the structure of continuous functionals acting on weakly convergent sequences of
gradients is fairly rigid. The theory of compensated compactness \cite{Tartar1,Tartar2,Tartar3,Murat1,Murat2} was built around the question of
characterizing the non-linear functionals that are weakly continuous when evaluated on sequences that satisfy certain linear partial differential constraints.
One needs only make the elementary observation that these weakly continuous quantities are necessarily polynomials to see that their structure is indeed
very restricted. Moreover, under the constant rank assumption of \cite{Murat2} on the differential constraint, it was shown in \cite{GR} that all such
polynomials are computable and explicit examples show that weakly continuous functionals are very few.

On the other hand, it was already shown in \cite{Tartar1} that the structure of weakly lower semicontinuous quadratic forms is much richer.
Therein, the natural question of characterizing the  functionals that are weakly lower semicontinuous when acting on constrained sequences was asked.
Of interest to us is the comprehensive answer given in \cite{FM99}, where it is shown building on \cite{MorreyB,DaLN,Murat2} that, under the constant rank condition,
weak lower semi-continuity is equivalent with $\cala$-quasiconvexity (see Section~\ref{sec:prel} for notation and terminology). Moreover, the duality via
Jensen-type inequalities of $\cala$-quasiconvex functions of $p$-growth and oscillation Young measures generated by weakly-$\lebe^p$ convergent
$\cala$-free sequences was established, extending the results in \cite{KP1,KP2}.

Similar characterizations can  account for concentration effects, as can be seen from \cite{FMP,FK}. Such extensions rely on decomposition lemmas for
$\lebe^p$-weakly convergent sequences for $1<p<\infty$, which highlight the decoupling of oscillation and concentration effects in this range (see \cite{JK,FMP,Kr}).
At the endpoint $p=1$, concentration effects arising from the application of functionals of linear growth can only be detected for sequences that converge
weakly-* in the sense of measures. In this case, the oscillation and concentration effects of an $\cala$-free sequence may fail to display $\cala$-free
features, as can be seen from the example of a single gradient measure \cite{Alberti_lusin}. Consequently, the lower semicontinuity problem in this case
is substantially more difficult and has been settled in \cite{AmbrosioDalMaso,FM,KR1} in the case of gradient measures and in \cite{ARDPR,BDG} in more general
contexts, building on the recent fundamental results in \cite{DPR,KirKri} (see also \cite{BCMS}, which deals with the case of first order operators $\cala$,
without using a rank-one type theorem \cite{Alberti,DPR} or the automatic convexity result in \cite{KirKri}). 

The characterization of oscillation and concentration in the spirit of \cite{KP1,KP2} was extended to the $\BV$ set-up in \cite{KR2,KR1},
and extended to symmetrized gradient measures in \cite{GDPFR2}. A fairly restricted class of $\cala$-free measures was considered in \cite{BMS}.

The purpose of the present paper is two-fold. We provide a general characterization of $\cala$-free generalized Young measures by duality with $\cala$-quasiconvex
functions of linear growth via Jensen-type inequalities under the assumptions that the operator $\cala$ has constant rank and that its wave cone is spanning (for the definitions of these assumptions, as well as the terminology used in the statement below, see Sections~\ref{sec:prel},~\ref{sec:ym}). However, perhaps
more importantly, our purpose is also expository: when restricting our proof to the basic gradient case, the argument is significantly shorter and more streamlined than
the existing ones. The same can be said about the case without concentration effects (cf. \cite{KP2,JK,Kr,FM99}).

Recall from \cite{Rpot} that there exists another differential operator $\bala$ such that $\cala$-quasiconvexity is equivalent with
$\cala$-$\bala$-quasiconvexity in the sense of \cite{DaLN,DaFo}; in particular, $\cala\circ\bala\equiv 0$. Say that the order of $\bala$ is $l$ and
let $\Omega\subset\R^n$ be a bounded open set such that $\mathscr{L}^n(\partial\Omega)=0$.
\begin{theorem}\label{main}
  Let $\nu$ be a Young measure on $\Omega$ such  that $\lambda(\partial\Omega)=0$ and let $v\in\mathcal{M}(\Omega,\mathbb V)$ be its barycentre. Then there exist a sequence of mollifiers
  $\phi_j\wstar\delta_0$ in $\mathcal M(\Omega)$ and a sequence of maps $u_j\in\hold^\infty_c(\Omega,\mathbb{U})$ such that
$$
\begin{cases}
\phi_j*v+\bala u_j\toY \nu\\
\|u_j\|_{\sobo^{l-1,1}}\rightarrow 0
\end{cases}
$$
if and only if $\cala v=0$ and for all $\cala$-quasiconvex $f\colon\mathbb{V}\rightarrow\R$ of linear growth it holds that
$$
\int_{\mathbb{V}}f\dif \nu_x+\lambda^a(x)\int_{\mathbb{S}_\mathbb{V}}f^\infty\dif\nu_x^\infty\geq
f(\overline{\nu}_x+\lambda^a(x)\overline{\nu}_x^\infty)\quad\text{for $\lebn$-a.e. }x\in\Omega,
$$
where $\lambda=\lambda^a\lebn\restrict\Omega+\lambda^s$, $\lambda^s\perp \lebn$, is a Radon--Nikod\'ym decomposition of $\lambda$.
\end{theorem}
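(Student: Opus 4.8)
The plan is to prove the two implications separately; the substance is entirely in the sufficiency (``if'') direction, the necessity being a localisation of the lower semicontinuity inequality for $\cala$-free measures. For necessity, suppose $w_j:=\phi_j*v+\bala u_j\toY\nu$ with $\|u_j\|_{\sobo^{l-1,1}}\to0$. Since $\cala\circ\bala\equiv0$ one has $\cala(\phi_j*v+\bala u_j)=\phi_j*(\cala v)$; the admissible generating sequences here being $\cala$-free, and $\phi_j\wstar\delta_0$, we get $\cala v=0$. Now fix an $\cala$-quasiconvex $f\colon\V\to\R$ of linear growth. For each ball $\ball(x_0,r)\Subset\Omega$ with $\lambda(\partial\ball(x_0,r))=0$ (all but countably many $r$), $\int_{\ball(x_0,r)}f(w_j)\,\dif x$ converges to the generalised Young measure pairing $\int_{\ball(x_0,r)}\bigl(\int_\V f\,\dif\nu_x+\lambda^a(x)\int_{\SV}f^\infty\,\dif\nu_x^\infty\bigr)\dif x+\int_{\ball(x_0,r)}f^\infty\,\dif\lambda^s$ (here $\lambda$ charges no part of $\partial\ball(x_0,r)$), while $w_j\restrict\ball(x_0,r)$ is $\cala$-free with barycentre $v\restrict\ball(x_0,r)$, so the lower semicontinuity theorem of \cite{ARDPR,BDG} bounds that limit below by $\int_{\ball(x_0,r)}f(v^a)\,\dif x+\int_{\ball(x_0,r)}f^\infty\!\bigl(\tfrac{\dif v^s}{\dif|v^s|}\bigr)\dif|v^s|$. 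Dividing by $\lebn(\ball(x_0,r))$ and letting $r\downarrow0$ at a common Lebesgue point of the $\lebn$-densities involved, and discarding the singular terms via $\lambda^s(\ball(x_0,r))/\lebn(\ball(x_0,r))\to0$ and $|f^\infty|\le C|\cdot|$, gives the claimed pointwise inequality at $\lebn$-a.e.\ $x_0$, since $v^a(x_0)=\overline\nu_{x_0}+\lambda^a(x_0)\overline\nu_{x_0}^\infty$.

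For sufficiency, assume $\cala v=0$ and that the Jensen inequalities hold. The core is the \emph{homogeneous model case}: a homogeneous Young measure $(\sigma,m\lebn\restrict Q,\tau)$ on the unit cube $Q$ with barycentre $\xi_0=\overline\sigma+m\overline\tau$ and $\int f\,\dif\sigma+m\int f^\infty\,\dif\tau\ge f(\xi_0)$ for every $\cala$-quasiconvex $f$ of linear growth is generated by a sequence $\xi_0+\bala\psi_k$ with $\psi_k\in\hold^\infty_c(Q,\U)$ and $\|\psi_k\|_{\sobo^{l-1,1}(Q)}\to0$. I would obtain this by Hahn--Banach separation in the space of admissible integrands (those $f\in\hold(\V)$ possessing a recession $f^\infty$): the set $\F$ of generalised Young measures so generated is convex and weakly-$*$ closed, by metrisability of the convergence and a diagonal argument; if $(\sigma,m\lebn,\tau)\notin\F$ there is a separating integrand, which one may take $\cala$-quasiconvex of linear growth (replace it by its $\cala$-$\bala$-quasiconvex envelope --- equal to its $\cala$-quasiconvex envelope by \cite{Rpot} --- truncating to retain linear growth), contradicting the hypothesis. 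The bound $\|\psi_k\|_{\sobo^{l-1,1}}\to0$ is automatic: the oscillation correctors ($\bala\psi$ bounded in $\LL^\infty$ at frequency $k$) and concentration correctors ($\bala\psi$ a mass-$O(1)$ bump of width $1/k$) used to build $\F$ all satisfy it, being one full order below the scale on which $\bala\psi$ is $\LL^1$. Here the spanning wave cone supplies concentrating $\bala$-potentials pointing in enough directions to realise an arbitrary $\tau\in\Meas(\SV)$, and the automatic convexity of recession functions \cite{KirKri} is what leaves the singular part of $\nu$ unconstrained by the (purely $\lebn$-a.e.) Jensen inequality stated.

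To globalise, split according to the mutual singularity of $\lebn$ and $\lambda^s$. On a portion of $\Omega$ carrying $\lebn$ but essentially no $\lambda^s$ (so there $v=v^a$), tile by cubes of side $1/k$; for a mollification scale $\varepsilon_k\ll1/k$, on each cube $\phi_k*v$ is $\cala$-free and, up to a small $\cala$-free perturbation (harmless in the limit, $f$ being Lipschitz of linear growth), equal to the constant $v(Q_i^{(k)})/\lebn(Q_i^{(k)})$, about which the blow-up of $\nu$ is close to a homogeneous Young measure satisfying, by the $\lebn$-a.e.\ Jensen inequality, the hypothesis of the model case; apply the model case on each cube to obtain $\psi_{i,k}\in\hold^\infty_c(Q_i^{(k)},\U)$. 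On the complementary $\lebn$-null part one adds instead a concentrating $\bala$-potential corrector producing the singular Young measure $(\nu^\infty,\lambda^s)$ about the direction $\overline\nu^\infty/|\overline\nu^\infty|$ already generated there by $\phi_k*v^s$ --- again using the spanning wave cone and \cite{KirKri}. Summing all correctors yields $u_k\in\hold^\infty_c(\Omega,\U)$ (cubes meeting $\partial\Omega$ are discarded at vanishing cost since $\lambda(\partial\Omega)=0=\lebn(\partial\Omega)$), with $\phi_k*v+\bala u_k$ $\cala$-free; a diagonal choice of the internal frequencies makes $\phi_k*v+\bala u_k\toY\nu$ as $k\to\infty$ and, the $\sim k^n$ cell correctors each being $O(k^{-n})$ in $\sobo^{l-1,1}$ once their frequencies are sent to infinity, also $\|u_k\|_{\sobo^{l-1,1}}\to0$; relabelling in $j$ concludes.

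I expect the main obstacle to be the homogeneous model case: extracting from the abstract family of Jensen inequalities an actual generating sequence of the constrained form $\xi_0+\bala\psi_k$ with $\psi_k$ compactly supported and the corrector small in $\sobo^{l-1,1}$, while keeping control of the linear-growth/recession class of test integrands through the Hahn--Banach and truncation steps and handling oscillation and concentration \emph{simultaneously} at the endpoint $p=1$, where no decomposition lemma is available. The reduction of $\cala$-quasiconvexity to $\cala$-$\bala$-quasiconvexity from \cite{Rpot} is precisely what makes this tractable and is the source of the simplification over \cite{ARDPR,BDG} claimed in the introduction.
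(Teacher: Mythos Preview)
Your overall strategy---Hahn--Banach for the homogeneous case, then a regular/singular split with tiling for inhomogenization---is precisely the paper's. There is, however, one genuine gap in your treatment of the singular part.

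At $\lambda^s$-a.e.\ $x$ the Jensen inequality is \emph{not} part of the hypothesis (which is stated only $\lebn$-a.e.), so to apply the homogeneous Hahn--Banach lemma locally near the singular set you must \emph{derive} it. You invoke \cite{KirKri}, but that result only gives convexity of the $1$-homogeneous recession function $f^\infty$ at points of the wave cone $\Lambda_{\cala}$; it does not make $f^\infty$ globally convex (indeed it generally is not, cf.\ \cite{Mu}). The missing ingredient is the rank-one theorem of \cite{DPR} (Lemma~\ref{rankonetheorem} here): since $\cala v=0$ and $v^s=\overline{\nu}^\infty_\cdot\,\lambda^s$, one has $\overline{\nu}^\infty_x\in\Lambda_{\cala}$ for $\lambda^s$-a.e.\ $x$. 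Only then does the chain
\[
f\bigl(r\,\overline{\nu}^\infty_x\bigr)\ \le\ f(0)+r\,f^\infty\bigl(\overline{\nu}^\infty_x\bigr)\ \le\ f(0)+r\int_{\SV}f^\infty\,\dif\nu^\infty_x
\]
go through, the first inequality by $\Lambda_{\cala}$-directional convexity (three-slope, Lemma~\ref{rcoflineargrowth}) and the second by \cite{KirKri} (Lemma~\ref{1homo}). The spanning wave cone enters only as a hypothesis of Lemma~\ref{1homo} and of Lemma~\ref{lem:finite_valued}; it does not by itself ``supply concentrating $\bala$-potentials in enough directions to realise an arbitrary $\tau$''---that is the job of the Hahn--Banach lemma once the Jensen inequality is in hand.

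A smaller issue: your necessity argument declares the generating sequence $\phi_j\ast v+\bala u_j$ to be $\cala$-free in order to conclude $\cala v=0$, which is circular, and then feeds it into the lower semicontinuity theorems of \cite{ARDPR,BDG}, which themselves require (approximate) $\cala$-freeness. The paper instead localises via Proposition~\ref{local}: the blow-ups of $\nu$ at $\lebn$-a.e.\ point are generated by genuinely $\cala$-free periodic sequences, after which the ``only if'' half of Lemma~\ref{hb} gives the pointwise Jensen inequality directly.
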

Our argument to prove this follows the broad two-step structure that several previous proofs have, namely to first deal with the case of homogeneous
Young measures and then employ an approximation procedure. The main conceptual novelty of this work is that our approximation procedure is completely
measure theoretic, as can be seen from Section~\ref{sec:inhom}. In particular, we do not use any fine structure properties of $\cala$-free measures;
these are only used in the homogeneous step, see Section~\ref{sec:HB}.

A preliminary version of our result, covering the case when the barycentre $v$ has no singular part, appeared in \cite[Chapter~3]{Rthesis}.
As we show here, the strategy used there is flexible enough to also deal with the interaction between the singular parts of the barycentre and
concentration measures, as we implement in Lemmas~\ref{reggen} and \ref{singgen}. We record that
a similar result is asserted in the recent preprint \cite{Adolf}; 
{there, the Helmholtz-type decomposition from \cite{FM99} plays a key role, whereas here we rely on local potentials as in \cite{Rpot}.}

Finally, in Section~\ref{sec:diff_conc} we will focus on the (diffuse) concentration  angle measure. Recall that as a consequence of the main results in \cite{DPR,KirKri}, the measures $\nu_x^\infty $ are unconstrained for $\lambda^s$ almost every $x$. Here, we will present a new necessary Jensen-type inequality that $\nu_x^\infty$ satisfies $\lambda^a\lebn$ almost everywhere if $\nu$ is $\cala$-free. Under a decoupling condition, we then show that this inequality is also sufficient to generate $\nu$ with $\cala$-free fields.

This paper is organized as follows: In Section~\ref{sec:prel} we recall some properties of the Kantorovich norm, linear partial differential operators,
and quasiconvex and directionally convex functions. In Section~\ref{sec:proof} we introduce Young measures with some detail on the functional analytic
aspects, as well as more technical properties. In Section~\ref{sec:HB} we essentially prove Theorem~\ref{main} in the case of homogeneous Young measures,
whereas in Section~\ref{sec:inhom} we perform the crucial approximation argument. In Section~\ref{sec:diff_conc} we comment on the diffuse concentration of $\cala$-free Young measures.

\section{Preliminaries}\label{sec:prel}

\subsection{Kantorovich norm}
Let $\mathrm{X}$ be a compact and separable metric space. We recall
that $\CC (\mathrm{X})$ with the supremum norm is a separable Banach space
whose dual can be identified with the space $\mathcal{M}( \mathrm{X})$ of signed bounded Radon measures on
$\mathrm{X}$. The subspace $\LIP (\mathrm{X})$ consisting of all Lipschitz
functions $\Phi \colon \mathrm{X} \to \R$ is a (non-separable) Banach space under the norm
$$
\| \Phi \|_{\LIP} = \sup_{x \in \mathrm{X}} | \Phi (x)| + \lip (\Phi ).
$$
The Kantorovich norm of a signed bounded Radon measure $\mu$ on $\mathrm{X}$ is here defined as
$$
\| \mu \|_{\mathrm{K}} = \sup \biggl\{ \langle \mu , \Phi \rangle : \, \Phi \in \LIP ( \mathrm{X}), \, \| \Phi \|_{\LIP} \leq 1 \biggr\} ,
$$
so it is the dual norm of $\| \cdot \|_{\LIP}$ restricted to $\mathcal{M}( \mathrm{X})$. 
We shall be interested in its restriction to the space $\mathcal{M}^{+}(\mathrm{X})$ of positive bounded Radon measures
that becomes a metric space under the Kantorovich metric $\mathrm{d}_{\mathrm{K}}(\mu, \nu ) = \| \mu - \nu \|_{\mathrm{K}}$. We record
the useful fact that for $\mu \in \mathcal{M}^{+}( \mathrm{X})$ we have
\begin{equation}\label{kantnorm}
\| \mu \|_{\mathrm{K}} = \mu ( \mathrm{X}).
\end{equation}

\begin{proof}
Since $\ONE_{\mathrm{X}} \in \LIP (\mathrm{X})$ and $\| \ONE_{\mathrm{X}} \|_{\LIP} =1$ we clearly have $\| \mu \|_{\mathrm{K}} \geq \mu (\mathrm{X})$.
Conversely, if $\Phi \in \LIP (\mathrm{X})$ with $\| \Phi \|_{\LIP} \leq 1$, then in particular $\max_{\mathrm{X}} | \Phi | \leq 1$ so
using that $\mu$ is a positive measure we get
$$
\langle \mu , \Phi \rangle \leq \langle \mu , \ONE_{\mathrm{X}} \rangle = \mu (\mathrm{X}),
$$
hence taking supremum over such $\Phi$ we arrive at the opposite inequality.
\end{proof}
As a subset of $\CC (\mathrm{X})^{\ast}$ the space $\mathcal{M}^{+}(\mathrm{X})$ also inherits the weak$\mbox{}^{\ast}$ topology:
$$
\biggl\{ \mathcal{O} \cap \mathcal{M}^{+}(\mathrm{X}) : \, \mathcal{O} \in \sigma \bigl( \CC (\mathrm{X})^{\ast},\CC (\mathrm{X}) \bigr) \biggr\} .
$$
\begin{lemma}\label{weakstarkanto}
On $\mathcal{M}^{+}(\mathrm{X})$ the relative weak$\mbox{}^{\ast}$ topology is exactly the topology determined by the
Kantorovich metric $\mathrm{d}_{\mathrm{K}}$.
\end{lemma}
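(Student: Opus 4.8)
The plan is to prove Lemma~\ref{weakstarkanto} by establishing that the identity map between $\mathcal{M}^{+}(\mathrm{X})$ with the weak$^{\ast}$ topology and $\mathcal{M}^{+}(\mathrm{X})$ with the Kantorovich metric topology is a homeomorphism. Since both topologies make $\mathcal{M}^{+}(\mathrm{X})$ into a topological space in which sequences determine the topology is \emph{not} automatic, I would be careful about whether to argue at the level of nets or to first reduce to sequences; since $\mathrm{X}$ is compact metric, $\CC(\mathrm{X})$ is separable, so bounded subsets of $\CC(\mathrm{X})^{\ast}$ are weak$^{\ast}$ metrizable, and one checks directly that $\mathcal{M}^{+}(\mathrm{X})$ is contained in the closed unit-mass-bounded pieces on which everything is metrizable once we note (via \eqref{kantnorm}) that Kantorovich-convergent sequences have bounded mass. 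So it suffices to show that for sequences $\mu_j,\mu\in\mathcal{M}^{+}(\mathrm{X})$ one has $\mu_j\wstar\mu$ if and only if $\mathrm{d}_{\mathrm{K}}(\mu_j,\mu)\to 0$.

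For the implication $\mathrm{d}_{\mathrm{K}}\to 0 \implies$ weak$^{\ast}$: I would first observe that $\mathrm{d}_{\mathrm{K}}(\mu_j,\mu)\to 0$ forces $\langle\mu_j,\Phi\rangle\to\langle\mu,\Phi\rangle$ for every $\Phi\in\LIP(\mathrm{X})$ (by homogeneity, testing against $\Phi/\|\Phi\|_{\LIP}$), and in particular, taking $\Phi=\ONE_{\mathrm{X}}$, that $\mu_j(\mathrm{X})\to\mu(\mathrm{X})$, so the masses are bounded. Then since $\LIP(\mathrm{X})$ is dense in $\CC(\mathrm{X})$ (Stone--Weierstrass, or an explicit inf-convolution/McShane extension argument), a standard $3\varepsilon$-estimate using the uniform mass bound upgrades convergence against Lipschitz test functions to convergence against all of $\CC(\mathrm{X})$, which is exactly $\mu_j\wstar\mu$.

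For the reverse implication $\mu_j\wstar\mu \implies \mathrm{d}_{\mathrm{K}}\to 0$: here the point is that the unit ball $\{\Phi\in\LIP(\mathrm{X}):\|\Phi\|_{\LIP}\le 1\}$, viewed inside $\CC(\mathrm{X})$, is equibounded and equicontinuous, hence by Arzel\`a--Ascoli it is relatively compact in $\CC(\mathrm{X})$. Weak$^{\ast}$ convergence $\mu_j\wstar\mu$ gives $\langle\mu_j-\mu,\Phi\rangle\to 0$ \emph{uniformly} over any compact (indeed, any totally bounded) subset of $\CC(\mathrm{X})$ --- this is the classical fact that a weak$^{\ast}$-null sequence in the dual of a Banach space, being norm-bounded by uniform boundedness, converges to $0$ uniformly on norm-compact sets. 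Taking the supremum over the $\LIP$ unit ball then yields $\|\mu_j-\mu\|_{\mathrm{K}}\to 0$.

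The main obstacle is bookkeeping the two topological facts cleanly: (i) that it genuinely suffices to test with sequences, which rests on weak$^{\ast}$ metrizability of mass-bounded sets of $\CC(\mathrm{X})^{\ast}$ together with the observation from \eqref{kantnorm} that Kantorovich balls control total mass; and (ii) the uniform-convergence-on-compacta step in the reverse direction, where one must invoke both the uniform boundedness principle (to get a uniform norm bound on $\mu_j-\mu$ from pointwise weak$^{\ast}$ convergence) and Arzel\`a--Ascoli (to get compactness of the $\LIP$ unit ball in $\CC(\mathrm{X})$). Everything else is routine density and $\varepsilon/3$ estimation.
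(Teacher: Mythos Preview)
Your approach is sound but differs from the paper's. The paper works directly at the level of open sets rather than via sequences: for one inclusion it approximates a given $\Phi\in\CC(\mathrm{X})$ from above by Lipschitz functions via sup-convolution to trap a basic weak$^{\ast}$ neighborhood inside a $d_{\mathrm{K}}$-ball, and for the other it uses Arzel\`a--Ascoli to extract a finite $\delta$-net $\Delta$ (containing $\ONE_{\mathrm{X}}$) from the Lipschitz unit sphere and then exhibits an explicit weak$^{\ast}$ neighborhood, defined by testing against $\Delta$, that sits inside a prescribed $d_{\mathrm{K}}$-ball. This sidesteps any metrizability discussion entirely. Your sequential route---density of $\LIP$ in $\CC$ plus the fact that weak$^{\ast}$-null bounded sequences converge uniformly on the Arzel\`a--Ascoli-compact Lipschitz unit ball---is arguably more conceptual, and both directions of the sequential equivalence are correctly argued. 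The one point that needs sharpening is your reduction to sequences: $\mathcal{M}^{+}(\mathrm{X})$ is \emph{not} mass-bounded, so the weak$^{\ast}$ topology on it is not globally metrizable, and your sentence about it being ``contained in the closed unit-mass-bounded pieces'' is not right as written. The correct localization is exactly what \eqref{kantnorm} gives you: any $d_{\mathrm{K}}$-ball $B_{d_{\mathrm{K}}}(\mu_{0},r)$ lies inside the weak$^{\ast}$-open, mass-bounded set $\{\mu:\mu(\mathrm{X})<\mu_{0}(\mathrm{X})+r\}$, on which weak$^{\ast}$ \emph{is} metrizable; hence to show that $d_{\mathrm{K}}$-balls are weak$^{\ast}$ open it suffices to check sequences there, and your argument does the rest. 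Make this step explicit and the proof is complete.
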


\begin{proof}
In order to show that $\mathcal{M}^{+}(\mathrm{X}) \cap \mathcal{O}$ is open relative to the Kantorovich metric for each
weak$\mbox{}^{\ast}$ open $\mathcal{O}$ it suffices to show that for each $\Phi \in \CC (\mathrm{X})$ and $t \in \R$ the set
\begin{equation}\label{basicweakstar}
\biggl\{ \mu \in \mathcal{M}^{+}(\mathrm{X}) : \, \langle \mu , \Phi \rangle < t \biggr\}
\end{equation}
is open relative to the Kantorovich metric on $\mathcal{M}^{+}( \mathrm{X})$. To that end we fix $\mu_0 \in \mathcal{M}^{+}( \mathrm{X})$
with $\langle \mu_{0} , \Phi \rangle < t$. Next, we employ a standard approximation scheme and put for each $j \in \N$,
$$
\Phi_{j}(x) = \sup \bigl\{ \Phi (y)-jd_{\mathrm{X}}(x,y) : \, y \in \mathrm{X} \bigr\} .
$$
Hereby $\lip (\Phi_{j}) \leq j$ and $\Phi_{j}(x) \searrow \Phi (x)$ pointwise in $x \in \mathrm{X}$ (hence uniformly) as $j \nearrow \infty$.
Select $j \in \N$ so
$$
\langle \mu_{0}, \Phi_{j} \rangle <   \frac{t + \langle \mu_{0},\Phi \rangle}{2}.
$$
If we take any positive $r < (t-\langle \mu_{0} , \Phi \rangle )/2j$, then we have for each $\mu \in \mathcal{M}^{+}(\mathrm{X})$ with
$\| \mu - \mu_{0} \|_{\mathrm{K}} < r$ that
\begin{eqnarray*}
\langle \mu , \Phi \rangle &\leq& \langle \mu , \Phi_{j} \rangle\\
&\leq& \langle \mu-\mu_{0},\Phi_{j} \rangle + \frac{t+\langle \mu_{0},\Phi \rangle}{2}\\
&<& r.
\end{eqnarray*}
It follows that the set defined at (\ref{basicweakstar}) is open relative to $\mathrm{d}_{\mathrm{K}}$.

For the opposite inclusion we fix $\mu_{0} \in \mathcal{M}^{+}(\mathrm{X})$ and $r > 0$ and must show that the open ball
$\bigl\{ \mu \in \mathcal{M}^{+}(\mathrm{X}) : \, \| \mu - \mu_{0} \|_{\mathrm{K}} < r \bigr\}$ is weak$\mbox{}^{\ast}$ open relative to
$\mathcal{M}^{+}(\mathrm{X})$. To get started we note that by the Arzela-Ascoli Theorem 
the set $\mathbb{S}=\bigl\{ \Phi : \, \| \Phi \|_{\LIP} =1 \bigr\}$ is totally bounded in $\CC (\mathrm{X})$. Hence for each $\delta > 0$ we
can find a finite $\delta$-net $\Delta$ in $\mathbb{S}$. Without loss in generality we may assume that $\ONE_{\mathrm{X}} \in \Delta$.
Now put, for a $t \in \R$ to be specified,
$$
H_{\Delta} = \biggl\{ \mu \in \mathcal{M}^{+}(\mathrm{X}) : \, \max_{\Phi \in \Delta} \langle \mu-\mu_{0}, \Phi \rangle < t \biggr\} 
$$
and note that for $\mu \in H_{\Delta}$ and $\Psi \in \mathbb{S}$ we have
\begin{eqnarray*}
\langle \mu - \mu_{0}, \Psi \rangle &=& \min_{\Phi \in \Delta} \biggl( \langle \mu - \mu_{0},\Phi \rangle + \langle \mu - \mu_{0},\Psi - \Phi \rangle \biggr)\\
&<& t+\bigl( \mu (\mathrm{X}) + \mu_{0}( \mathrm{X}) \bigr)\delta .
\end{eqnarray*}
In order to bound the last term we use that $\ONE_{\mathrm{X}} \in \Delta$. It entails that $\mu ( \mathrm{X}) < t + \mu_{0}( \mathrm{X})$ and consequently
$$
\langle \mu - \mu_{0}, \Psi \rangle < t + \bigl( 2\mu_{0}( \mathrm{X}) +t \bigr) \delta .
$$
We leave it to the reader to check that we may choose
$$
t= \frac{r}{2} \, \mbox{ and then any } \, \delta \in (0,\frac{r}{4 \mu_{0}( \mathrm{X}) +r})
$$
to complete the proof.
\end{proof}
\subsection{Linear partial differential operators}
We will work with linear homogeneous partial differential operators on $\R^n$
\begin{align}\label{pdos}
    \cala\equiv\sum_{|\alpha|=k}A_\alpha\partial^\alpha \quad \text{and}\quad \bala\equiv\sum_{|\beta|=l}B_\beta\partial^\beta,
\end{align}
where $A_\alpha\in \mathscr{L}(\V.\W)$ and $B_\beta\in \mathscr{L}(\U, \V)$ for all $\alpha,\beta\in \mathbb{N}_0^n$ with $|\alpha|=k$ and $|\beta|=l$. We write for $\xi\in\R^n$
$$
\cala(\xi)\equiv\sum_{|\alpha|=k}\xi^\alpha A_\alpha \quad \text{and}\quad \bala(\xi)\equiv\sum_{|\beta|=l}\xi^\beta B_\beta.
$$
Throughout we will assume that $\cala$ has constant rank, meaning that $\mathrm{rank}\cala(\xi)$ is independent of $\xi\neq 0$. Moreover, this assumption is equivalent with the existence of $\bala$ as above such that
$$
\ker\cala (\xi)=\mathrm{im\,}\bala(\xi)\quad \text{for }\xi\neq 0,
$$
see \cite{Rpot}. The existence of such $\bala$ is assumed implicitly in what follows.

We also record the definition of the wave cone of $\cala$,
$$
\Lambda_\cala\equiv \bigcup_{\xi\in\mathbb{S}^{n-1}}\ker\cala(\xi).
$$

Our core object of study will be $\cala$-free measures, for which we record the following remarkable structure theorem from {\cite{DPR}}:

\begin{lemma}\label{rankonetheorem}
If $v \in \mathcal{M}( \Omega , \V )$ satisfies $\cala v = 0$ in the sense of distributions, then
$$
\frac{\dd v^s}{\dd |v^{s}|} \in \Lambda_{\cala} \quad |v^{s}|\mbox{-almost everywhere in } \Omega .
$$
\end{lemma}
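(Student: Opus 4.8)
The natural first move is a blow-up argument. Suppose the conclusion fails, so that there is a Borel set $E\subseteq\Omega$ with $|v^s|(E)>0$ on which the polar $P:=\tfrac{\dd v}{\dd |v|}$ satisfies $\dista(P(x),\Lambda_\cala)\geq\delta$ for some $\delta>0$. Restricting $E$ and combining Lusin's theorem with the Besicovitch differentiation theorem, I would fix a point $x_0\in E$ at which $P$ is approximately continuous with respect to $|v|$, with value $P_0:=P(x_0)\notin\Lambda_\cala$, at which $\Theta^{*n}(|v^s|,x_0)=+\infty$ (valid for $|v^s|$-a.e.\ $x_0$ precisely because $|v^s|\perp\lebn$), and at which standard tangent measure theory provides a sequence $r_j\downarrow0$ with $c_{r_j}(T_{x_0,r_j})_\#|v|\to\tau\neq0$ weakly-$*$ locally. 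Since $\cala$ is homogeneous of order $k$, the rescaling $v\mapsto c_r(T_{x_0,r})_\#v$ is compatible with the constraint — each rescaled measure is again $\cala$-free, hence so is any weak-$*$ limit — and approximate continuity of $P$ at $x_0$ forces the limit to be $P_0\tau$. So one is reduced to ruling out a nonzero $\sigma\in\mathcal M_{\locc}(\Rn,\V)$ with $\cala\sigma=0$, $\sigma=P_0\tau$ for $\tau\geq0$ and $P_0\notin\Lambda_\cala$, arising as a tangent measure of $|v^s|$ at a point of $\lebn$-singularity.

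The last clause is essential, and this is what makes the statement nontrivial. On the Fourier side $\cala\sigma=0$ reads $\cala(\xi)P_0\,\widehat\tau(\xi)=0$; since $P_0\notin\ker\cala(\xi)$ for every $\xi\neq0$ (because $\ker\cala(\xi)\subseteq\Lambda_\cala$), the vector $\cala(\xi)P_0$ never vanishes for $\xi\neq0$, so one would \emph{like} to conclude $\widehat\tau\equiv0$ away from the origin, i.e.\ that $\tau$ is a polynomial times $\lebn$. But $\tau$ need not be tempered, so this is not rigorous — and it would not help even if it were, because $P_0\lebn$ (or $P_0\,p(x)\lebn$ with $\deg p<k$) is a perfectly good $\cala$-free measure with constant polar $P_0\notin\Lambda_\cala$. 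Hence any purely "rigidity of homogeneous measures" argument is doomed; the $\lebn$-singularity of $|v^s|$ must be used \emph{quantitatively, at finite scales}, not softly in the limit. In particular the blow-up above is at best a reduction, never the engine.

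The engine — and the technical heart of the proof — is a harmonic-analytic dimensional estimate for $\cala$-free measures. Using constant rank, the orthogonal projection $\mathbb P(\xi)$ onto $\ker\cala(\xi)=\mathrm{im}\,\bala(\xi)$ is a smooth $0$-homogeneous symbol on $\Rn\setminus\{0\}$, so $\mathrm{Id}-\mathbb P$ defines a Calderón--Zygmund operator, bounded on $L^p$ for every $1<p<\infty$, that formally annihilates any $\cala$-free $v$. The plan is to exploit this at finite scales — localising $v\restriction B_r(x_0)$ with cut-offs and absorbing the commutators against $(\mathrm{Id}-\mathbb P)(D)$ — to show that the component of $v$ along directions transversal to $\Lambda_\cala$ gains integrability over $L^1$; concretely, that $|v|$ restricted to $\{x:\dista(P(x),\Lambda_\cala)\geq\delta\}$ admits no point of positive upper $n$-density. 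Near $x_0$, approximate continuity of $P$ makes the mass of $|v|$ in that set comparable to all of $|v|(B_r(x_0))$, whereas $\Theta^{*n}(|v^s|,x_0)=+\infty$ forces $\Theta^{*n}(|v|,x_0)=+\infty$; this is the contradiction (which, note, no longer even needs the blow-up of the first paragraph).

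The main obstacle is entirely contained in that last step: choosing the multiplier so that precisely the $P_0$-direction becomes elliptic, controlling the localisation commutators tightly enough to preserve the gain, and — most delicate — converting the resulting $L^p$ bound with $p>1$ into a statement about $n$-densities. The restriction $p>1$ is unavoidable, Calderón--Zygmund operators not being bounded on $L^1$, and this is exactly the shadow cast by the endpoint nature of linear growth; negotiating it is the substance of the De Philippis--Rindler argument, while the blow-up reduction and the Fourier heuristic above are soft.
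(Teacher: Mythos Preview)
The paper does not prove this lemma; it is simply recorded with attribution to De Philippis and Rindler \cite{DPR}. Your outline is essentially theirs: the blow-up reduction, the correct observation that tangent-measure rigidity alone cannot conclude (since $P_0\lebn$ is itself $\cala$-free with constant polar $P_0$), and the identification of the real engine as a finite-scale harmonic-analytic estimate producing an integrability gain that collides with $\Theta^{*n}(|v^s|,x_0)=+\infty$.

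One point does need correcting. The operator $(\mathrm{Id}-\mathbb P)(D)$ annihilates \emph{all} of an $\cala$-free $v$, so it carries no information about where the polar $P(x)$ points; it cannot isolate ``the component of $v$ along directions transversal to $\Lambda_\cala$'' as you claim. The multiplier that actually does the work is tailored to the fixed bad direction $P_0$: since $P_0\notin\Lambda_\cala$ means $\cala(\xi)P_0\neq0$ for every $\xi\neq0$, the map $\xi\mapsto\cala(\xi)P_0$ is an \emph{elliptic} order-$k$ symbol from scalars to $\W$, and composing with a smooth homogeneous left inverse of order $-k$ gives a $0$-homogeneous Calder\'on--Zygmund operator that recovers the scalar density $|v|$ near $x_0$ (where $v\approx P_0|v|$ by approximate continuity) from lower-order commutator data. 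This is what delivers the $L^p$ gain; your diagnosis of the $L^1$-endpoint difficulty is otherwise on target. Note too that the De Philippis--Rindler theorem does not require constant rank, whereas smoothness of $\mathbb P(\xi)$ does --- a further hint that $(\mathrm{Id}-\mathbb P)$ is not the right object here.
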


\subsection{$\cala$-quasiconvex integrands}

We recall from \cite[Definition~3.1]{FM99} that an integrand $f \colon \V \to \R$ is said to be $\cala$-quasiconvex if 
\begin{align*}
f(z)\leq\int_\X f(z+v(x))\dif x
\end{align*}
holds for all $z \in\V$ and all $v\in\hold^\infty_{\mathrm{per}}(\X,\V)$ such that $\int_\X v\dif x=0$ and $\cala v=0$.
We wrote $\X$ for the unit cube $(-\tfrac{1}{2},\tfrac{1}{2})^n$. If $\Phi\in\hold(\V)$ may fail to be $\cala$-qusiconvex, then one defines
the $\cala$-quasiconvex envelope $\Phi^{\mathrm{qc}}$ of $\Phi$ as
\begin{align*}
\Phi^{\mathrm{qc}}(\xi)=\inf\left\{\int_\X\Phi(\xi+v(x))\dif x\colon v\in\hold^\infty_{\mathrm{per}}(\X,\V),\,\int_\X v=0,\,\cala v=0 \right\}.
\end{align*}
\begin{lemma}[{\cite[Proposition~3.4]{FM99}}]\label{qcrc}
If $f \in \hold(\V)$, then $f^{\mathrm{qc}}$ is $\cala$-quasiconvex. If $f$ is $\cala$-quasiconvex, then it is also  $\Lambda_{\cala}$-convex. 
\end{lemma}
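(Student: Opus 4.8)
The statement bundles two facts that I would prove independently: that the $\cala$-quasiconvex envelope of a continuous integrand is again $\cala$-quasiconvex, and that an $\cala$-quasiconvex integrand is automatically convex along the directions of the wave cone $\Lambda_\cala$.

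For the first fact I would verify the defining inequality $f^{\mathrm{qc}}(z)\le\int_\X f^{\mathrm{qc}}(z+w)\,\dif x$ directly, for $z\in\V$ and admissible $w\in\hold^\infty_{\mathrm{per}}(\X,\V)$ (so $\int_\X w=0$, $\cala w=0$), assuming as usual that $f^{\mathrm{qc}}$ is locally bounded. The plan is a localization argument: given $\eta>0$, cut $\X$ into $M^n$ congruent subcubes $Q_i=c_i+M^{-1}\X$, choose on each a competitor $v_i\in\hold^\infty_{\mathrm{per}}(\X,\V)$ with $\int_\X v_i=0$, $\cala v_i=0$ and $\int_\X f(z+w(c_i)+v_i)\,\dif y<f^{\mathrm{qc}}(z+w(c_i))+\eta$, rescale $v_i$ to $Q_i$ (which, $\cala$ being $k$-homogeneous, keeps it mean-zero and $\cala$-free on $Q_i$), and paste the pieces into a field $V$ on $\X$. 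Since $w(x)\approx w(c_i)$ on $Q_i$, continuity of $f$ together with a Riemann-sum estimate for $f^{\mathrm{qc}}\circ(z+w)$ would give $\int_\X f(z+V)\,\dif x\le\int_\X f^{\mathrm{qc}}(z+w)\,\dif x+O(\eta)$, and since $f^{\mathrm{qc}}(z)\le\int_\X f(z+V)\,\dif x$ we conclude on letting $\eta\downarrow0$. The real work here, I expect, is that a naive pasting ruins both periodicity and the constraint $\cala V=0$, since adjacent cells carry different competitors; this has to be repaired, for instance by passing to local potentials $v_i=\bala u_i$ (available because $\ker\cala(\xi)=\mathrm{im}\,\bala(\xi)$), cutting the $u_i$ off near the cell faces and absorbing the lower-order commutator $\bala(\chi u_i)-\chi\,\bala u_i$, or else by projecting the cut-off fields back onto $\cala$-free fields with the constant-rank Fourier multiplier and controlling the resulting $\LL^p$-error; in either case the constant rank hypothesis is used, the cut-off scales must be matched to $M^{-1}$, and one also needs the local competitors to be uniformly bounded over the compact set of values $z+w(\overline\X)$.

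For the second fact I would argue with plane waves. First I would reduce, by the obvious affine change of variables, to the inequality
\begin{equation*}
f\bigl(\theta z_-+(1-\theta)z_+\bigr)\le\theta f(z_-)+(1-\theta)f(z_+),\qquad \theta\in(0,1),
\end{equation*}
for arbitrary $z_\pm\in\V$ with $v_0:=z_+-z_-\in\ker\cala(\xi)$ and $\xi\in\mathbb{S}^{n-1}$ --- this is $\Lambda_\cala$-convexity verbatim. Writing $z:=\theta z_-+(1-\theta)z_+$, so that $z_-=z-(1-\theta)v_0$ and $z_+=z+\theta v_0$, suppose first $\xi\in\mathbb{Q}^n$ and fix $q\in\N$ with $q\xi\in\mathbb{Z}^n$. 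Take $h\colon\R\to\R$ to be $1$-periodic with $h\equiv-(1-\theta)$ on $[0,\theta)$ and $h\equiv\theta$ on $[\theta,1)$ (so $\int_0^1 h=0$) and let $h_\varepsilon\in\hold^\infty(\R)$ be its periodic mollification; then $v_\varepsilon(x):=h_\varepsilon(q\xi\cdot x)v_0$ is admissible, in that it lies in $\hold^\infty_{\mathrm{per}}(\X,\V)$, has $\int_\X v_\varepsilon=0$, and
\begin{equation*}
\cala v_\varepsilon=\sum_{|\alpha|=k}A_\alpha\,\partial^\alpha\bigl(h_\varepsilon(q\xi\cdot x)\bigr)v_0=h_\varepsilon^{(k)}(q\xi\cdot x)\,q^{k}\,\cala(\xi)v_0=0.
\end{equation*}
Hence $\cala$-quasiconvexity yields $f(z)\le\int_\X f(z+h_\varepsilon(q\xi\cdot x)v_0)\,\dif x$, and since the integrand is uniformly bounded (its argument stays in the compact segment $z+[-1,1]v_0$ because $\|h_\varepsilon\|_\infty\le1$) and converges a.e., while $x\mapsto q\xi\cdot x\bmod 1$ pushes $\lebn\restrict\X$ forward to Lebesgue measure on $\R/\mathbb{Z}$, dominated convergence gives
\begin{equation*}
f(z)\le\int_0^1 f(z+h(s)v_0)\,\dif s=\theta f(z_-)+(1-\theta)f(z_+).
\end{equation*}
Finally, for irrational $\xi\in\mathbb{S}^{n-1}$ I would take $\xi_m\in\mathbb{Q}^n$ with $\xi_m\to\xi$ and replace $v_0$ by $v_0^m:=P(\xi_m)v_0$, the orthogonal projection onto $\ker\cala(\xi_m)$; under the constant rank hypothesis $P$ is smooth on $\R^n\setminus\{0\}$, so $v_0^m\to P(\xi)v_0=v_0$, and applying the rational case with $v_0^m$ and letting $m\to\infty$, continuity of $f$ delivers the general inequality.

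In short, the plane-wave argument for the second fact is essentially mechanical once one knows that rational frequencies suffice and that $\ker\cala(\xi)$ varies smoothly --- both consequences of constant rank. The step I expect to be genuinely delicate is the correction in the localization proof of the first fact: making the pasted field exactly $\cala$-free and periodic while keeping the induced error negligible.
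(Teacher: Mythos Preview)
The paper does not supply its own proof of this lemma; it is quoted verbatim from \cite[Proposition~3.4]{FM99} and treated as a black box. So there is nothing in the present paper to compare your argument against.

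That said, your strategy is essentially the one in the cited reference. For the envelope statement, the localization-plus-correction scheme you outline (pick near-optimal competitors on subcubes, rescale, and repair the $\cala$-free/periodic constraint via cut-off potentials or the constant-rank projection) is exactly how Fonseca--M\"uller proceed, and you have correctly flagged the only genuinely delicate step. For the $\Lambda_\cala$-convexity, the plane-wave construction is the standard one; your handling of irrational $\xi$ via rational approximation and the smooth dependence of $\ker\cala(\xi)$ under constant rank is correct, though one can also avoid this by rescaling the frequency (taking $h_\varepsilon(j\,\xi\cdot x)$ for large $j$) and using that the boundary layer where periodicity fails has vanishing measure. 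Either route works and neither is materially shorter.
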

We have the following elementary property for integrands of linear growth:
\begin{lemma}\label{lem:finite_valued}
Suppose that $\cala$ is an operator of the form \eqref{pdos} for which the wave cone is spanning, i.e.,
$\mathrm{span\,}\Lambda_{\cala} =\V$. Let $\Phi\in\hold(\V)$ have linear growth $|\Phi|\leq c(|\cdot|+1)$ and be such that its
$\cala$-quasiconvex envelope satisfies
\begin{align*}
\Phi^{\mathrm{qc}}(\xi_0)>-\infty\quad\text{ for some }\xi_0\in\V.
\end{align*}
Then $\Phi^{\mathrm{qc}}$ is real valued, $\cala$-quasiconvex and has linear growth, $|\Phi^{\mathrm{qc}}|\leq c(|\cdot|+1)$.
\end{lemma}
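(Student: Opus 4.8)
The plan is to treat the lower bound as the only real issue, the upper bound being free: taking $v\equiv 0$ in the definition of $\Phi^{\mathrm{qc}}$ gives $\Phi^{\mathrm{qc}}\le\Phi\le c(|\cdot|+1)$, so $\Phi^{\mathrm{qc}}<+\infty$ everywhere, and what remains is to rule out the value $-\infty$ and to produce a lower bound of linear growth. The tool I would use throughout is that $\Phi^{\mathrm{qc}}$ is $\Lambda_\cala$-convex: by the first part of Lemma~\ref{qcrc} it is $\cala$-quasiconvex, hence by the second part $\Lambda_\cala$-convex, and the (lamination) argument behind Lemma~\ref{qcrc} applies with the usual conventions to the $[-\infty,+\infty)$-valued envelope. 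Thus for every $z\in\V$ and every $\lambda\in\Lambda_\cala$ the slice $t\mapsto\Phi^{\mathrm{qc}}(z+t\lambda)$ is a convex function $\R\to[-\infty,+\infty)$.

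First I would record two one-dimensional facts about a convex function $g\colon\R\to[-\infty,+\infty)$ that is bounded above, say $g\le M_0+M_1|t|$. (i) If $g$ is finite at one point then it is finite everywhere: if $g(a)=-\infty$ while $g(b)\in\R$, writing $b$ as the midpoint of $a$ and $c:=2b-a$ gives $g(b)\le\tfrac12 g(a)+\tfrac12 g(c)=-\infty$ because $g(c)<+\infty$, a contradiction. (ii) Once $g$ is real valued, every subgradient $s$ of $g$ at any $t_0$ obeys $|s|\le M_1$, as one sees by letting $t\to\pm\infty$ in $g(t_0)+s(t-t_0)\le g(t)\le M_0+M_1|t|$; hence $g(t)\ge g(t_0)-M_1|t-t_0|$.

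Next I would propagate these along the wave cone. Since $\mathrm{span}\,\Lambda_\cala=\V$ and $\Lambda_\cala$ is a cone, fix a basis $\lambda_1,\dots,\lambda_N$ of $\V$ with each $\lambda_i\in\Lambda_\cala$ and $|\lambda_i|=1$. Given $\xi\in\V$, write $\xi=\xi_0+\sum_{i=1}^N t_i\lambda_i$ and set $\eta_0:=\xi_0$, $\eta_i:=\eta_{i-1}+t_i\lambda_i$, so that $\eta_N=\xi$ and consecutive points differ by a wave-cone direction. On the line through $\eta_0$ in direction $\lambda_1$ the slice of $\Phi^{\mathrm{qc}}$ is convex, bounded above by $c(|\cdot|+1)$, and finite at $\eta_0$ (by hypothesis); by (i) it is finite on all of that line, and by (ii) $\Phi^{\mathrm{qc}}(\eta_1)\ge\Phi^{\mathrm{qc}}(\eta_0)-c|t_1|$. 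Iterating through $\lambda_2,\dots,\lambda_N$ gives that $\Phi^{\mathrm{qc}}(\xi)$ is finite and
$$\Phi^{\mathrm{qc}}(\xi)\ \ge\ \Phi^{\mathrm{qc}}(\xi_0)-c\sum_{i=1}^N|t_i|\ \ge\ \Phi^{\mathrm{qc}}(\xi_0)-C\,|\xi-\xi_0|,$$
with $C$ depending only on the chosen basis. Hence $\Phi^{\mathrm{qc}}$ is real valued and bounded below by a function of linear growth; combined with $\Phi^{\mathrm{qc}}\le c(|\cdot|+1)$ this yields $|\Phi^{\mathrm{qc}}|\le c(|\cdot|+1)$ after relabelling the constant. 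Finally, since $\Phi^{\mathrm{qc}}$ is now real valued and of linear growth, its $\cala$-quasiconvexity is exactly the first part of Lemma~\ref{qcrc}.

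I expect the delicate point to be the extended-valued bookkeeping underpinning the first paragraph — that $\Phi^{\mathrm{qc}}$ is genuinely $\Lambda_\cala$-convex as an $[-\infty,+\infty)$-valued function and that the linear growth of $\Phi$ is invoked correctly to control it from above before any convexity is used. The spanning hypothesis is what upgrades the one-dimensional dichotomy along wave-cone lines to the global statement ``$\Phi^{\mathrm{qc}}\equiv-\infty$ or $\Phi^{\mathrm{qc}}$ real valued with linear growth''. If one wants the literal constant $c$ in the lower bound, rather than a dimensional multiple of it, this should be recoverable by running the slope estimate (ii) over a more carefully chosen spanning family of wave-cone directions; I would expect this refinement, and not the overall scheme, to be the only computational subtlety.
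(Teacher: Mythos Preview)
Your proof is correct and follows essentially the same route as the paper: both exploit the $\Lambda_{\cala}$-convexity of $\Phi^{\mathrm{qc}}$ together with the spanning hypothesis to propagate finiteness from $\xi_0$ along chains of wave-cone directions, and then use one-dimensional convexity against the linear upper bound to obtain a linear lower bound. The paper merely packages the last step differently---changing coordinates so that $\Phi^{\mathrm{qc}}$ becomes separately convex and citing \cite[Lemma~2.5]{Kr}---and it likewise arrives at a basis-dependent constant $c(M)$ rather than the literal $c$, so your caveat on that point applies to the paper's proof as well.
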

\begin{proof}
By Lemma~\ref{qcrc}, we have that $\Phi^{\mathrm{qc}}$ is convex in the directions of $\Lambda_{\cala}$.
In particular, whenever $\Phi^{\mathrm{qc}}(\eta)>-\infty$, it is also the case that $\Phi^{\mathrm{qc}}>-\infty$ on $\eta+\Lambda_{\cala}$.
Now let $\xi\in\V$. Since $\Lambda_{\cala}$ spans $\V$, we can find vectors $v_1,\ldots,v_s\in\Lambda_{\cala}$ such that $\xi-\xi_0=\sum_{i=1}^s v_i$.
By the previous consideration, it follows that $\Phi^{\mathrm{qc}}(\xi)>-\infty$, hence $\Phi^{\mathrm{qc}}$ is real valued. 
	
We choose a basis $\{e_i\}_{i=1}^d\subset\Lambda_{\cala}$ of $\V$, where $d=\dim\V$.
Let $M$ be a change of basis matrix such that $\{M^{-1}e_i\}_{i=1}^d$ is an orthonormal basis of $\V$.
Then $\Phi^{\mathrm{qc}}\circ M$ is separately convex. Using the facts that norms are equivalent on finite dimensional
spaces and $\Phi^{\mathrm{qc}}\leq \Phi$, we have that 
\begin{align*}
\Phi^{\mathrm{qc}}\circ M\leq c(M)(|\cdot|_M+1),
\end{align*}
where $|\cdot|_M$ denotes the Euclidean norm in the new coordinate system. By \cite[Lemma~2.5]{Kr}, we have that
$\Phi^{\mathrm{qc}}\circ M$ has linear growth from below. Again by norm equivalence, we have that 
\begin{align*}
\Phi^{\mathrm{qc}}\geq -c(M)(|\cdot|+1).
\end{align*}
The fact that $\Phi^{\mathrm{qc}}$ is $\cala$-quasiconvex follows from Lemma~\ref{qcrc}.
\end{proof}
We also record the following equivalent definition of $\cala$-quasiconvex envelopes in terms of $\bala$, which will be used
to prove the characterization of homogeneous $\cala$-free Young measures:
\begin{lemma}[{\cite[Corollary 5]{Rpot}}]
Let $\Phi\in\hold(\V)$, $\X=(0,1)^n$, and $\varepsilon>0$. Then 
\begin{align}\label{eq:relax_formula_YM}
\Phi^{\mathrm{qc}}(z)=\inf\left\{\int_\X\Phi( z+\bala u)\colon  u\in\hold^\infty_c(\X,\U),\,\|D^{l-1} u\|_{\lebe^\infty}<\varepsilon\right\}
\end{align}
for all $z \in\V$.
\end{lemma}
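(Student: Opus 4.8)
The plan is to establish the two inequalities in \eqref{eq:relax_formula_YM} separately; write $g_\varepsilon(z)$ for the infimum on the right-hand side. The bound $g_\varepsilon(z)\ge\Phi^{\mathrm{qc}}(z)$ is immediate and does not even use the constraint $\|D^{l-1}u\|_{\lebe^\infty}<\varepsilon$: given $u\in\hold^\infty_c(\X,\U)$, extend it by zero and then periodically to $\tu\in\hold^\infty_{\mathrm{per}}(\X,\U)$, and set $v:=\bala\tu$. Then $\cala v=\cala\bala\tu=0$, and $\int_\X v=\int_\X\bala u=0$ by integration by parts (each $\partial^\beta u$ with $|\beta|=l$ has vanishing integral over a period). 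Hence $v$ is admissible in the definition of $\Phi^{\mathrm{qc}}(z)$, and since $v=\bala u$ on $\X$ we get $\Phi^{\mathrm{qc}}(z)\le\int_\X\Phi(z+\bala u)$; taking the infimum over admissible $u$ proves the claim.

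For the reverse inequality I would first reduce the envelope to an infimum over $\bala$-potentials. By the equivalence of $\cala$-quasiconvexity with $\cala$-$\bala$-quasiconvexity established in \cite{Rpot} — equivalently, because a mean-free periodic $\cala$-free field $v$ admits a smooth periodic $\bala$-potential, obtained on the Fourier side from $\widehat u(\xi)=\bala(\xi)^\dagger\widehat v(\xi)$, where the pseudoinverse $\bala(\xi)^\dagger$ is $(-l)$-homogeneous and smooth off the origin thanks to the constant rank hypothesis together with $\operatorname{im}\bala(\xi)=\ker\cala(\xi)$ — one has $\Phi^{\mathrm{qc}}(z)=\inf\{\int_\X\Phi(z+\bala u):u\in\hold^\infty_{\mathrm{per}}(\X,\U)\}$. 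Fix $\eta>0$ and choose $w\in\hold^\infty_{\mathrm{per}}(\X,\U)$ with $\int_\X\Phi(z+\bala w)<\Phi^{\mathrm{qc}}(z)+\eta$. The key step is an $l$-homogeneous rescaling: for small $\lambda>0$ put $w_\lambda(x):=\lambda^l w(x/\lambda)$, so that $\bala w_\lambda(x)=(\bala w)(x/\lambda)$ by homogeneity of $\bala$, while $D^{l-1}w_\lambda(x)=\lambda\,(D^{l-1}w)(x/\lambda)$; hence $\|D^{l-1}w_\lambda\|_{\lebe^\infty}=\lambda\|D^{l-1}w\|_{\lebe^\infty}\to0$, and by periodic averaging $\int_{A}\Phi(z+\bala w_\lambda)\to|A|\int_\X\Phi(z+\bala w)$ as $\lambda\to0$ for every cube $A\subset\X$.

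To produce an admissible compactly supported competitor, fix a cutoff $\chi\in\hold^\infty_c(\X)$ with $\chi\equiv1$ near $(1-2\delta)\X$, $\operatorname{supp}\chi\subset(1-\delta)\X$ and $|D^j\chi|\lesssim\delta^{-j}$, and put $u:=\chi w_\lambda$. Expanding $D^{l-1}u$ by the Leibniz rule, each term $(D^j\chi)\cdot(D^{l-1-j}w_\lambda)$ is bounded by $C\delta^{-j}\lambda^{j+1}\|w\|_{\hold^l}=C\|w\|_{\hold^l}\lambda(\lambda/\delta)^j$, so that $\|D^{l-1}u\|_{\lebe^\infty}\le C(l)\|w\|_{\hold^l}\lambda<\varepsilon$ as soon as $\lambda\le\delta$ and $\lambda$ is small enough depending on $w$ and $\varepsilon$; an analogous expansion of $\bala u$ shows $|\bala u|\le M$ uniformly for $\lambda\le\delta$, since every derivative falling on $\chi$ is compensated by a power of $\lambda$, so $M$ (and hence a bound $M'$ for $|\Phi(z+\bala u)|$ on $\operatorname{supp}\nabla\chi$) does not depend on $\delta$. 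On $(1-2\delta)\X$ we have $\bala u=\bala w_\lambda$, while the complement has measure $\lesssim\delta$; letting first $\lambda\to0$ and then $\delta\to0$ gives $\limsup_{\lambda\to0}\int_\X\Phi(z+\bala u)\le(1-2\delta)^n\int_\X\Phi(z+\bala w)+M'n\delta\to\int_\X\Phi(z+\bala w)<\Phi^{\mathrm{qc}}(z)+\eta$, whence $g_\varepsilon(z)\le\Phi^{\mathrm{qc}}(z)+\eta$, and $\eta\to0$ concludes (the case $\Phi^{\mathrm{qc}}(z)=-\infty$ is handled identically, choosing $w$ with $\int_\X\Phi(z+\bala w)$ arbitrarily negative). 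The main obstacle is the tension between the requirement $\|D^{l-1}u\|_{\lebe^\infty}<\varepsilon$ and the amount of oscillation $\bala u$ must carry to realize the full $\cala$-quasiconvex envelope; the $l$-homogeneous scaling $w_\lambda(x)=\lambda^lw(x/\lambda)$ resolves it precisely because it leaves the amplitude of $\bala w_\lambda$ unchanged while shrinking $D^{l-1}w_\lambda$ by a factor $\lambda$. The secondary technical point is the cutoff: one must verify that all lower-order Leibniz terms produced by $\chi$ carry a compensating positive power of $\lambda$, which is what dictates the order of limits and the bookkeeping constraint $\lambda\le\delta$.
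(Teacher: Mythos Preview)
The paper does not give its own proof of this lemma; it is quoted verbatim as \cite[Corollary~5]{Rpot} and used as a black box in the Hahn--Banach argument of Section~\ref{sec:HB}. There is therefore nothing in the paper to compare your argument against.

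That said, your reconstruction is correct and captures exactly the mechanism behind the result in \cite{Rpot}. The inequality $g_\varepsilon(z)\geq\Phi^{\mathrm{qc}}(z)$ is indeed trivial by periodization. For the converse, you correctly invoke the potential construction from \cite{Rpot} (the Moore--Penrose inverse $\bala(\xi)^{\dagger}$ on the Fourier side, smooth away from the origin by constant rank) to rewrite $\Phi^{\mathrm{qc}}$ as an infimum over periodic $\bala$-potentials, and then the $l$-homogeneous rescaling $w_\lambda(x)=\lambda^l w(x/\lambda)$ is precisely the device that resolves the apparent tension in the statement: it keeps $\bala w_\lambda$ at unit amplitude while forcing $\|D^{l-1}w_\lambda\|_{\lebe^\infty}=\lambda\|D^{l-1}w\|_{\lebe^\infty}\to 0$. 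Your Leibniz bookkeeping for the cutoff is accurate---each term $D^j\chi\cdot D^{l-1-j}w_\lambda$ and each cross term in $\bala(\chi w_\lambda)$ picks up a factor $(\lambda/\delta)^j$, hence is uniformly bounded once $\lambda\leq\delta$---and the order of limits (first $\lambda\searrow 0$ to average by Riemann--Lebesgue, then $\delta\searrow 0$ to kill the boundary layer) is the right one. The remark on the case $\Phi^{\mathrm{qc}}(z)=-\infty$ is also correct. One cosmetic point: since here $\X=(0,1)^n$ is not centred, your notation $(1-2\delta)\X$ should be read as a concentric subcube rather than a dilation from the origin, but this does not affect the argument.
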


Finally, we also record a consequence of the main result of \cite{KirKri}:
\begin{lemma}\label{1homo}
  Let $f \colon \V \to \R$ be a positively $1$-homogeneous and $\Lambda_{\cala}$-convex integrand, where $\Lambda_\cala$ is spanning. Then $f$ is convex at each point
  of $\Lambda_{\cala}$.
\end{lemma}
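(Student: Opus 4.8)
The plan is to prove that for every $\xi_0\in\Lambda_\cala$ the function $f$ admits a supporting affine function at $\xi_0$, i.e.\ that there is an affine $\ell\colon\V\to\R$ with $\ell\le f$ on $\V$ and $\ell(\xi_0)=f(\xi_0)$; this is what "$f$ is convex at $\xi_0$" means. I would begin with two elementary reductions. First, since $\Lambda_\cala$ spans $\V$ we may fix a basis of $\V$ contained in $\Lambda_\cala$, and then $f$, being $\Lambda_\cala$-convex, is separately convex in the corresponding coordinates; a finite-valued separately convex function is continuous (indeed locally Lipschitz), and by \cite[Lemma~2.5]{Kr} it has linear growth from below, while linear growth from above is automatic from positive $1$-homogeneity and continuity. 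Hence the convex envelope $f^{**}$ is real-valued on $\V$, and being the convex envelope of a positively $1$-homogeneous function it is itself positively $1$-homogeneous; in particular $f^{**}(0)=0=f(0)$, so $\xi_0=0$ is trivial. Second, for $\xi_0\neq 0$ it suffices to show $f(\xi_0)=f^{**}(\xi_0)$: the inequality $f^{**}\le f$ is automatic, and if equality holds then $f^{**}$, being real-valued and convex, is subdifferentiable at $\xi_0$, and any affine $\ell$ supporting $f^{**}$ at $\xi_0$ also supports $f$ there.

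The substance is therefore the inequality $f^{**}(\xi_0)\ge f(\xi_0)$ for $\xi_0\in\Lambda_\cala\setminus\{0\}$, and this is where \cite{KirKri} enters. Observe that $\Lambda_\cala=\bigcup_{\xi\in\mathbb S^{n-1}}\ker\cala(\xi)$ is a union of linear subspaces, hence a cone invariant under all nonzero dilations and, since $\cala(-\xi)=(-1)^k\cala(\xi)$, under the antipodal map; by hypothesis it spans $\V$. I would then invoke the principle underlying the main result of \cite{KirKri}: a positively $1$-homogeneous, $\Lambda$-convex integrand on a finite-dimensional space, with $\Lambda$ a balanced spanning cone, coincides with its convex envelope at every point of $\Lambda$. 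When $\V$ is a space of matrices and $\Lambda$ is the rank-one cone this is contained in \cite{KirKri} (where one in fact obtains convexity of $f$ everywhere, so the assertion of the lemma is immediate in the gradient case). For a general wave cone one cannot expect full convexity, but the step yielding equality at a point $\xi_0$ of the cone is obtained in \cite{KirKri} by constructing an iterated $\Lambda$-laminate (a staircase-type probability measure) centred at $\xi_0$ and comparing it with Jensen's inequality for the $\Lambda$-convex $f$; this construction uses only that $\Lambda$ is a balanced spanning cone, and so applies with $\Lambda=\Lambda_\cala$. Applying it at $\xi_0$ gives $f(\xi_0)=f^{**}(\xi_0)$ and finishes the proof.

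The main obstacle, and the only non-routine point, is this last step: one must either verify that the $1$-homogeneous convexity argument of \cite{KirKri} is genuinely insensitive to the geometry of the cone—so that its "equality at cone points" conclusion holds for an arbitrary balanced spanning cone—or else supply an independent derivation of $f^{**}(\xi_0)\ge f(\xi_0)$ from $\Lambda_\cala$-convexity, positive $1$-homogeneity and the spanning hypothesis alone. A reduction to the rank-one matrix case by a linear identification appears to be unavailable, since a generic wave cone is not linearly equivalent to a cone of rank-one matrices. Everything preceding that—the passage to $f^{**}$, its properness and homogeneity via \cite{Kr}, the reduction of "convex at $\xi_0$" to the pointwise equality, and the case $\xi_0=0$—is routine.
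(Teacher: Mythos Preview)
Your approach is correct and aligns with the paper's treatment: the paper gives no proof at all, simply recording the lemma as ``a consequence of the main result of \cite{KirKri}''. Your reduction to the pointwise equality $f(\xi_0)=f^{**}(\xi_0)$ and the preliminary observations about continuity, linear growth and the case $\xi_0=0$ are all fine and standard.

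The only issue is the uncertainty you flag as the ``main obstacle''. It is not one: the main theorem of \cite{KirKri} (Theorem~1.1 there) is already stated and proved for an arbitrary balanced cone $\mathscr{D}$ that spans the ambient finite-dimensional space, not merely for the rank-one cone in a matrix space. Its conclusion is precisely that a positively $1$-homogeneous $\mathscr{D}$-convex function is convex at every point of $\mathscr{D}$. Since $\Lambda_{\cala}$ is a balanced spanning cone (balanced because $\cala(-\xi)=(-1)^k\cala(\xi)$, spanning by hypothesis), the lemma is a direct instance of that theorem with $\mathscr{D}=\Lambda_{\cala}$; no adaptation, linear identification, or separate verification of the staircase construction is needed. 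So your proof is complete once you drop the hedging in the final paragraph and cite \cite{KirKri} for exactly what it proves.
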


\section{Proof of main result}\label{sec:proof}

\subsection{Functional set-up}\label{sec:hom}
Let $\HH$ be the space of continuous integrands $\Phi \colon \V \to \R$ of linear growth that
admit a regular recession integrand:
$$
\Phi^{\infty} (z) = \lim_{t \to \infty} \frac{\Phi (tz)}{t} \quad \mbox{exists locally uniformly in } z \in \V .
$$
We recall that this is exactly the class of integrands $\Phi \colon \V \to \R$ for which the transformed integrand
$$
\hat{z} \mapsto \bigl( 1-| \hat{z}| \bigr) \Phi \left( \frac{\hat{z}}{1-|\hat{z}|} \right)
$$
is bounded and uniformly continuous on the open unit ball $\BBV (0,1)$. We endow $\HH$ with the norm
$$
\| \Phi \| = \sup_{z \in \V} \frac{|\Phi (z)|}{1+|z|}.
$$
The mapping $T \colon \HH \to \CC (\overline\BBV )$ defined by
$$
T\Phi ( \hat{z}) = \left\{
\begin{array}{ll}
\bigl( 1-| \hat{z}| \bigr) \Phi \left( \frac{\hat{z}}{1-|\hat{z}|} \right) & \mbox{ if } | \hat{z}| < 1\\
\Phi^{\infty} ( \hat{z} ) & \mbox{ if } | \hat{z}| = 1,
\end{array}
\right.
$$
is then easily seen to be an isometric isomorphism. Hence so is the dual mapping $T^\ast \colon \CC (\overline\BBV )^\ast \to \HH^\ast$.
By virtue of the Riesz representation theorem we may identify the dual space $\CC (\overline \BBV )^{\ast}$ with the space of bounded signed
Radon measures $\mathcal{M}( \overline\BBV )$. Hence given $\ell \in \HH^\ast$ there is a unique $\mu \in \mathcal{M} (\overline\BBV )$ so
$T^{\ast}\mu = \ell$. Consequently we have for $\Phi \in \HH$,
\begin{eqnarray*}
  \ell ( \Phi ) &=& \langle T^{\ast}\mu , \Phi \rangle = \int_{\BBV} \! T\Phi \, \dd \mu\\
  &=& \int_{\BBV (0,1)} \! \bigl( 1-| \hat{z}| \bigr) \Phi \left( \frac{\hat{z}}{1-| \hat{z}|} \right) \, \dd \mu ( \hat{z}) +\int_{\SV} \! \Phi^{\infty} \, \dd \mu .
\end{eqnarray*}
If we put $S(\hat{z}) = \tfrac{\hat{z}}{1-| \hat{z}|}$ for $\hat{z} \in \BBV (0,1)$ and
$$
\begin{array}{l}
  \mu^{0} = S_{\#}\biggl[ \bigl( 1-| \hat{z}| \bigr) \mu \restrict \BBV (0,1) \biggr] ,\\
  \mu^{\infty} = \mu \restrict \SV ,
\end{array}
$$
then $\mu^{0} \in \mathcal{M}( \V )$ has total variation with finite first moment, that is, $\langle | \mu^0 | , | \cdot | \rangle < \infty$,
$\mu^{\infty} \in \mathcal{M}( \SV )$, and
\begin{equation}\label{hdual}
\ell ( \Phi ) = \int_{\V} \! \Phi \, \dif \mu^0 + \int_{\SV} \! \Phi^{\infty} \, \dif \mu^{\infty} .
\end{equation}
In view of this representation we identify in the following each $\ell \in \HH^\ast$ with the unique pair of measures $( \mu^{0}, \mu^{\infty})$
as above. We also record that
$$
\| \ell \| = \int_{\V} \! \bigl( 1 + | \cdot | \bigr) \, \dd | \mu^{0}| + | \mu^{\infty}|( \SV )
$$
and that $\ell \geq 0$ when $\mu^{0} \geq 0$, $\mu^{\infty} \geq 0$, so that $T^\ast$ is a positive operator. This means that $T^\ast$ is a homeomorphism
of the positive cones, $\mathcal{M}^{+}( \BBV )$ onto $(\HH^{\ast})^+$ with their respective relative weak$\mbox{}^\ast$ topologies, and consequently,
by virtue of Lemma \ref{weakstarkanto}, that the relative weak$\mbox{}^\ast$ topology on $(\HH^{\ast})^{+}$ is determined by the (push-forward)
Kantorovich metric defined as
$$
\mathrm{d}_{\mathrm{K}}\bigl( (\mu^{0},\mu^{\infty}) , (\nu^{0}, \nu^{\infty}) \bigr) = \| (\mu^{0},\mu^{\infty}) - (\nu^{0}, \nu^{\infty}) \|_{\mathrm{K}}
$$
for $(\mu^{0},\mu^{\infty})$, $(\nu^{0}, \nu^{\infty}) \in \HH^{\ast}$, where the (push-forward) Kantorovich norm is
\begin{equation}\label{kantonh}
\| (\mu^{0},\mu^{\infty}) \|_{\mathrm{K}} = \sup_{\Phi \in \HH , \, \| T\Phi \|_{\LIP} \leq 1} \left| \int_{\V} \! \Phi \, \dd \mu^{0} + \int_{\SV} \! \Phi^{\infty} \, \dd \mu^{\infty} \right| .
\end{equation}
If $v$ is a bounded $\V$-valued Radon measure on $\X=(0,1)^n$ with Lebesgue--Radon--Nikod\'{y}m decomposition $v= v^{a}\dif x + v^s$, then we define
$\epy_v \in \HH^\ast$ by
\begin{align}\label{eq:elem_YM}
\epy_{v}( \Phi ) = \int_{\X} \! \Phi (v^{a}(x)) \, \dif x + \int_{\X} \! \Phi^{\infty}(v^s ) \quad (\Phi \in \HH ).
\end{align}
We record a by now standard approximation result for integrands of linear growth:
\begin{lemma}\label{approximationlemma}
Let $f \colon \V \to \R$ be of linear growth and $\cala$-quasiconvex. Then there exist $f_{j} \in \HH$ that are $\cala$-quasiconvex and satisfy
$$
f_{j}(z) \geq f_{j+1}(z) \searrow f(z) \mbox{ as } j \nearrow \infty
$$
for each $z \in \V$.
\end{lemma}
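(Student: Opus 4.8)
The plan is to approximate $f$ from above by taking pointwise maxima of $f$ with a sequence of convex auxiliary integrands whose recession behaviour is prescribed explicitly, and then to read off that the result lies in $\HH$. The elementary observation making this work is that the pointwise maximum of two $\cala$-quasiconvex integrands $h_1,h_2\colon\V\to\R$ is again $\cala$-quasiconvex: for every admissible test field $v\in\hold^\infty_{\mathrm{per}}(\X,\V)$ with $\int_\X v=0$ and $\cala v=0$ we have, for $i=1,2$,
$$
h_i(z)\le\int_\X h_i(z+v)\,\dd x\le\int_\X\max\{h_1,h_2\}(z+v)\,\dd x ,
$$
and taking the maximum over $i$ yields $\cala$-quasiconvexity of $\max\{h_1,h_2\}$. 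Moreover every convex integrand is $\cala$-quasiconvex, by Jensen's inequality.

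Concretely, I would fix $C>0$ with $|f|\le C(1+|\cdot|)$ on $\V$; for $j\in\N$ set $c_j:=\min_{\overline{\BBV}(0,j)}f-1$ (finite since $f$ is continuous and $\overline{\BBV}(0,j)$ is compact), pick a strictly increasing sequence $M_j>0$ with $M_j\ge(C+2)j-c_j+1$, and put
$$
g_j(z):=\max\bigl\{c_j,\ (C+2)|z|-M_j\bigr\},\qquad f_j:=\max\{f,g_j\}\quad(z\in\V).
$$
By the two observations above each $f_j$ is $\cala$-quasiconvex. The remaining properties I would then verify in turn. First, $f_j\ge f$ trivially; since $c_j$ is non-increasing (minimum over a larger ball) and $M_j$ is increasing, $g_{j+1}\le g_j$, hence $f_{j+1}\le f_j$. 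Second, for $|z|\le j$ the choice of $c_j$ and $M_j$ gives $(C+2)|z|-M_j\le(C+2)j-M_j<c_j<f(z)$, so $g_j\equiv c_j$ on $\overline{\BBV}(0,j)$ and $f_j\equiv f$ there; consequently $f_j(z)\searrow f(z)$ for every $z\in\V$. Third, $f$ and $g_j$ have linear growth, hence so does $f_j$; and once $|z|$ exceeds a radius $R_j$ depending only on $C,c_j,M_j$ one has both $(C+2)|z|-M_j>c_j$ and $(C+2)|z|-M_j\ge C(1+|z|)\ge f(z)$, so $f_j(z)=(C+2)|z|-M_j$ outside the compact set $\overline{\BBV}(0,R_j)$.

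The one nonroutine point is then to check $f_j\in\HH$. The function $z\mapsto(C+2)|z|-M_j$ visibly belongs to $\HH$, with recession integrand $(C+2)|\cdot|$: its image under $T$ is $\hat z\mapsto(C+2)|\hat z|-(1-|\hat z|)M_j$, which extends continuously to $\overline{\BBV}(0,1)$. Since $f_j$ coincides with this function outside a compact subset of $\V$ and is otherwise continuous of linear growth, $Tf_j$ is continuous on $\BBV(0,1)$ and extends continuously to $\overline{\BBV}(0,1)$ with the same boundary values, so $f_j\in\HH$. This completes the construction. I would also remark that the argument uses neither the constant-rank hypothesis nor the spanning of the wave cone, and in particular requires no a priori knowledge that $f$ itself admits a recession integrand.
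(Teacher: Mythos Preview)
Your argument is correct. The paper does not give its own proof but refers to \cite[Lemma~6.3]{KirKri} (stated for $\cala=\mathrm{curl}$ and said to extend immediately), and remarks that the $f_j$ constructed there are positively $1$-homogeneous outside a large ball $B(0,R_j)$. Your construction is a close variant of that idea: rather than arranging $f_j$ to be $1$-homogeneous at infinity, you force $f_j(z)=(C+2)|z|-M_j$ for $|z|>R_j$, which is affine in $|z|$ but not $1$-homogeneous. Both approaches rest on the same two elementary facts (convex $\Rightarrow$ $\cala$-quasiconvex, and $\max$ preserves $\cala$-quasiconvexity) and both yield $f_j\in\HH$ by making the behaviour at infinity explicit; the only difference is cosmetic, and the paper itself notes that the $1$-homogeneity is not used.

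One small caveat on your closing remark: you say the argument uses neither constant rank nor spanning of the wave cone. That is true for the algebraic steps, but to place $f_j$ in $\HH$ you need $f_j$ (hence $f$, since $f_j=f$ on $\overline{\BBV}(0,j)$) to be continuous. In the paper's setting continuity of $f$ is supplied by Lemma~\ref{rcoflineargrowth} via the spanning hypothesis; if you prefer to state the lemma without that hypothesis, you should add continuity of $f$ as an explicit assumption.
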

A proof in the case $\cala=\mathrm{curl}$ and which extends immediately  can be found in {\cite[Lemma~6.3]{KirKri}}. Although we will not use this fact, we remark that each integrand $f_j$ thus constructed is positively homogeneous of degree one outside a large ball $B(0,R_j)$.

We conclude the section with an elementary application of the three-slope inequality:

\begin{lemma}\label{rcoflineargrowth}
Let $\Lambda$ be a balanced and spanning cone in $\V$. If $f \colon \V \to \R$ is $\Lambda$-convex and of linear growth, then $f$ is
Lipschitz. Furthermore, for all $z \in \V$ and $w \in \Lambda$ we have
$$
f(z+w) \leq f(z)+f^{\infty}(w).
$$
\end{lemma}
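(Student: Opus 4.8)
The plan is to reduce the $\Lambda$-convexity statement to the scalar situation by restricting $f$ to lines in directions of $\Lambda$ and then to arbitrary lines using that $\Lambda$ spans. First I would fix a direction $w\in\Lambda\setminus\{0\}$ and a point $z\in\V$ and study the one-variable function $g(t)=f(z+tw)$, $t\in\R$. By $\Lambda$-convexity $g$ is convex, and by the linear growth hypothesis $|f|\le c(1+|\cdot|)$ we get $|g(t)|\le c(1+|z|+|t||w|)$, so $g$ is a convex function on $\R$ with at most linear growth. The three-slope (i.e.\ three-chord) inequality for the convex function $g$ then gives that the difference quotients $\tfrac{g(t)-g(s)}{t-s}$ are monotone in each argument and bounded above and below by the linear-growth constant; in particular $g$ is Lipschitz on $\R$ with a Lipschitz constant depending only on $c$ and $|w|$. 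This already bounds all the directional "derivatives'' of $f$ along directions in $\Lambda$ uniformly (after normalising $|w|=1$).

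Next I would upgrade this to global Lipschitz continuity of $f$ on $\V$. Since $\Lambda$ is spanning we may, exactly as in the proof of Lemma~\ref{lem:finite_valued}, choose a basis $\{e_1,\dots,e_d\}\subset\Lambda$ of $\V$ with $d=\dim\V$; write $M$ for the associated change-of-basis matrix so that $f\circ M$ is separately convex and of linear growth in the new (Euclidean) coordinates. The previous paragraph, applied to the coordinate directions, shows that $f\circ M$ is separately Lipschitz with a uniform constant, and a separately Lipschitz function of $d$ variables with uniform constant is Lipschitz; transferring back through $M$ and using equivalence of norms on $\V$ yields that $f$ itself is Lipschitz on $\V$, with constant controlled by $c$, $d$ and $M$.

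For the recession inequality $f(z+w)\le f(z)+f^{\infty}(w)$ with $z\in\V$, $w\in\Lambda$, I would return to the convex function $g(t)=f(z+tw)$. Convexity of $g$ makes the slope $t\mapsto \tfrac{g(t+1)-g(t)}{1}=f(z+(t+1)w)-f(z+tw)$ nondecreasing, hence bounded above by its limit as $t\to\infty$, which by the linear-growth bound on $g$ equals $\lim_{t\to\infty}\tfrac{g(t)}{t}=\lim_{t\to\infty}\tfrac{f(z+tw)}{t}=\lim_{t\to\infty}\tfrac{f(tw)}{t}=f^{\infty}(w)$ (the middle equalities because $f$ has linear growth, so the bounded perturbation $z$ does not affect the limit, and by definition of $f^{\infty}$, which exists since $w\in\Lambda$ and $f$ is $\Lambda$-convex of linear growth — the one-dimensional recession limit along a direction of convexity always exists). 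Evaluating the slope bound at $t=0$ gives $f(z+w)-f(z)\le f^{\infty}(w)$, as required.

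The main obstacle, such as it is, is bookkeeping rather than conceptual: one must make sure the Lipschitz constant obtained from the three-slope inequality is genuinely uniform over all $z\in\V$ and all unit $w\in\Lambda$ (this is where linear growth from above and below along convex lines is used), and that passing from separate Lipschitzianity to joint Lipschitzianity through the non-orthogonal basis $\{e_i\}$ is handled with the correct norm-equivalence constants; one should also note that linear growth of a $\Lambda$-convex $f$ automatically forces the one-dimensional recession limits along $\Lambda$ to exist, which is what makes the inequality $f(z+w)\le f(z)+f^\infty(w)$ meaningful.
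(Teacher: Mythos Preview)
Your proposal is correct and is exactly the ``elementary application of the three-slope inequality'' that the paper alludes to without spelling out; the paper gives no detailed proof of this lemma, so your write-up is in fact more explicit than the paper's, but the underlying idea is identical.
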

\subsection{Young measures}\label{sec:ym} 
Similarly to the autonomous integrands in Section~\ref{sec:hom}, and following \cite{AlBo,KR2}, we will define the space $\mathbb{E}(\Omega,\V)$ of (non-autonomous)
integrands $\Phi\in\hold(\Omega\times \V)$ of linear growth $|\Phi(x,z)|\leq c(1+|z|)$, where the best constant defines the norm, and
such that the recession integrand
$$
\Phi^\infty(x,z)=\lim_{t\rightarrow\infty,x^\prime\rightarrow x}\frac{\Phi(x^\prime,tz)}{t}
$$
exists uniformly in $(x,z)\in \overline \Omega\times \mathbb{S_V}$. We extend the definition of
$T\colon\mathbb{E}(\Omega,\V)\rightarrow \hold(\overline{\Omega\times \mathbb{B_V}})$ in an obvious way, thereby obtaining an isometric isomorphism.
Consequently, the dual space $\mathbb{E}(\Omega,\V)^*$ can be identified with $\mathcal{M}(\overline{\Omega}\times \overline{\mathbb{B_V}})$.
By the Stone--Weierstrass theorem we have the following:
\begin{lemma}\label{lem:identify}
  Suppose that $\mu\in \mathcal{M}(\overline{\Omega}\times \overline{\mathbb{B_V}})$ is such that $\langle \mu ,\eta\otimes\Psi\rangle=0$
  for all $\eta\in \mathrm{LIP}(\overline{\Omega})$ and $\Psi\in\mathrm{LIP}(\overline{\mathbb{B_V}})$. Then $\mu\equiv 0$.
  Consequently, to identify an element in $\mathbb{E}(\Omega,\V)^*$, it suffices to test only with $\eta\otimes \Phi$,
  where $\|\eta\|_{\mathrm{LIP}(\Omega)}\leq 1$ and $\|T\Phi\|_{\LIP(\mathbb{B_V})}\leq 1$.
\end{lemma}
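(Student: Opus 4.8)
The plan is to recognise the first claim as a direct application of the Stone--Weierstrass theorem and then to transport it through the isometric isomorphism $T$ to obtain the second. Write $\eta\otimes\Psi$ for the function $(x,\hat z)\mapsto\eta(x)\Psi(\hat z)$ on $\overline\Omega\times\overline{\mathbb{B_V}}$, and let $\mathcal{A}\subset\CC(\overline\Omega\times\overline{\mathbb{B_V}})$ be the linear span of $\{\eta\otimes\Psi:\eta\in\mathrm{LIP}(\overline\Omega),\ \Psi\in\mathrm{LIP}(\overline{\mathbb{B_V}})\}$. Since $\overline\Omega$ and $\overline{\mathbb{B_V}}$ are compact metric spaces, sums and products of Lipschitz functions on each of them are again Lipschitz; together with the identity $(\eta_1\otimes\Psi_1)(\eta_2\otimes\Psi_2)=(\eta_1\eta_2)\otimes(\Psi_1\Psi_2)$ this shows that $\mathcal{A}$ is a subalgebra of $\CC(\overline\Omega\times\overline{\mathbb{B_V}})$. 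It contains the constants ($\ONE=\ONE\otimes\ONE$) and separates points: if $(x_1,\hat z_1)\neq(x_2,\hat z_2)$, then either $x_1\neq x_2$, in which case $\eta(\cdot)=|\,\cdot-x_1|$ together with $\ONE$ does the job, or $\hat z_1\neq\hat z_2$, handled symmetrically. By Stone--Weierstrass, $\mathcal{A}$ is uniformly dense in $\CC(\overline\Omega\times\overline{\mathbb{B_V}})$. Since the hypothesis and linearity give $\langle\mu,g\rangle=0$ for every $g\in\mathcal{A}$, and $\mu$ is a bounded functional on $\CC(\overline\Omega\times\overline{\mathbb{B_V}})$ by the Riesz representation theorem, we conclude $\langle\mu,g\rangle=0$ for all $g\in\CC(\overline\Omega\times\overline{\mathbb{B_V}})$, that is, $\mu\equiv0$.

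For the second assertion, recall that $\mathbb{E}(\Omega,\V)^*$ is identified with $\mathcal{M}(\overline\Omega\times\overline{\mathbb{B_V}})$ via $T^*$, the dual of the isometric isomorphism $T$. If $\eta\in\CC(\overline\Omega)$ and $\Phi\in\HH$, then $\eta\otimes\Phi\colon(x,z)\mapsto\eta(x)\Phi(z)$ lies in $\mathbb{E}(\Omega,\V)$ with regular recession integrand $(\eta\otimes\Phi)^\infty=\eta\otimes\Phi^\infty$, whence $T(\eta\otimes\Phi)=\eta\otimes(T\Phi)$, where on the right $T\Phi\in\CC(\overline{\mathbb{B_V}})$. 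Suppose now that $\ell_1,\ell_2\in\mathbb{E}(\Omega,\V)^*$, represented by $\mu_1,\mu_2$, agree on every $\eta\otimes\Phi$ with $\|\eta\|_{\mathrm{LIP}(\Omega)}\leq1$ and $\|T\Phi\|_{\LIP(\mathbb{B_V})}\leq1$. By positive homogeneity they then agree on $\eta\otimes\Phi$ for all $\eta\in\mathrm{LIP}(\overline\Omega)$ and all $\Phi\in\HH$ with $T\Phi\in\mathrm{LIP}(\overline{\mathbb{B_V}})$, and since $T\colon\HH\to\CC(\overline{\mathbb{B_V}})$ is onto, the functions $T\Phi$ arising in this way exhaust $\mathrm{LIP}(\overline{\mathbb{B_V}})$. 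Hence $\langle\mu_1-\mu_2,\eta\otimes\Psi\rangle=\ell_1(\eta\otimes\Phi)-\ell_2(\eta\otimes\Phi)=0$ for all $\eta\in\mathrm{LIP}(\overline\Omega)$ and $\Psi=T\Phi\in\mathrm{LIP}(\overline{\mathbb{B_V}})$, and the first part gives $\mu_1=\mu_2$, i.e.\ $\ell_1=\ell_2$.

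I do not anticipate a genuine obstacle: the assertion is essentially a repackaging of the Stone--Weierstrass theorem. The only step calling for some care is the bookkeeping in the second paragraph --- verifying that $\eta\otimes\Phi\in\mathbb{E}(\Omega,\V)$ with the expected regular recession integrand and that $T$ respects the tensor-product structure --- which is precisely what lets the abstract density statement of the first paragraph be brought to bear in the present functional-analytic setting.
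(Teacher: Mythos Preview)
Your proof is correct and follows exactly the approach indicated by the paper, which simply prefaces the lemma with ``By the Stone--Weierstrass theorem we have the following'' and provides no further argument. You have spelled out precisely the details that the paper leaves implicit, including the verification that $T(\eta\otimes\Phi)=\eta\otimes(T\Phi)$ needed to reduce the second assertion to the first.
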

We can now define Young measures as follows:
\begin{definition}
We say that a triple $\nu=\left((\nu_x)_{x\in\Omega},\lambda,(\nu_x^\infty)_{x\in\overline{\Omega}}\right)$ is a \emph{Young measure} if
\begin{enumerate}
    \item $\nu_x\in\mathcal{M}^+_1(\V)$ for $\lebn$-a.e. $x\in\Omega$ and $x\mapsto \nu_x$ is weakly* Lebesgue measurable;
    \item $\lambda\in\mathcal{M}^+(\overline{\Omega})$;
    \item $\nu_x^\infty\in\mathcal{M}^+_1(\mathbb{S_V})$ for $\lambda$-a.e. $x\in\Omega$ and $x\mapsto \nu_x$ is weakly* $\lambda$ measurable;
    \item $\int_{\Omega}\int_\V |z|\dif\nu_x(z)\dif x<\infty $.
\end{enumerate}
We say that $(\nu_x)_{x\in\Omega}$ is the \emph{oscillation measure}, $\lambda$ is the \emph{concentration measure}, and
$(\nu_x^\infty)_{x\in\overline\Omega}$ is the \emph{concentration-angle measure}.
\end{definition}
We identify Young measures with elements of $\mathbb{E}(\Omega,\V)^*$ via the identity
$$
\langle \nu,\Phi\rangle_{\mathbb{E}^*,\mathbb{E}}\equiv \int_{\Omega}\int_{\V}\Phi(x,z)\dif\nu_x(z)\dif x+
\int_{\overline{\Omega}}\int_{\mathbb{S_V}}\Phi^\infty(x,z)\dif\nu_x^\infty(z)\dif\lambda(x)\quad \text{for }\Phi\in\mathbb{E}(\Omega,\mathbb{V}).
$$
We further identify measures $v\in\mathcal{M}(\Omega,\V)$ with (elementary) Young measures by 
$$
\langle \epy_v,\Phi\rangle_{\mathbb{E}^*,\mathbb{E}}\equiv \int_{\Omega}\Phi(x,v^a(x))\dif x+
\int_{\Omega} \! \Phi^{\infty}\left(x,\dfrac{\dif v^s}{\dif|v^{s}|}(x)\right)\dif|v^{s}|(x)\quad \text{for }\Phi\in\mathbb{E}(\Omega,\mathbb{V}),
$$
where $|v^{s}|$ denotes the total variation measure and $v=v^a\lebn\restrict\Omega +v^s$ is the Lebesgue--Radon--Nikod\'ym decomposition of the measure $v$.
 Finally, we say that a sequence $v_j\in\mathcal{M}(\Omega,\V)$ generates $\nu$ if $\epy_{v_j}\wstar \nu$ in $\mathbb{E}(\Omega,\V)^*$.
\begin{proposition}[{\cite[Proposition~2.24]{ARDPR}}]\label{local}
Assume $v_j$, $v$ are $\V$-valued Radon measures on $\Omega$, $Q\subset\R^n$ is a cube naturally identified with the torus $\mathbb{T}_n$, and $\cala$ has constant rank.
Let $v_j \wstar v$ and $\cala v_j \to 0$ in
$\sobo^{-k,q}( \Omega , \W )$ for some $q \in (1,\tfrac{n}{n-1})$. If $v_j \toY \nu$, then there exist an $\lebn$-null set
$N^a \subset \Omega$ 
such
for each $x \in \Omega \setminus N^a$ the Young measure 
$$
\nu (x) = \left( ( \nu_x )_{y \in Q}, \frac{\dif \lambda}{\dif \lebn}(x) \lebn \restrict Q , ( \nu_{x}^{\infty} )_{y \in Q} \right)
$$
is generated by a sequence $( \tilde v_j ) \subset \CC^{\infty} ( \mathbb{T}_n , \V )$ satisfying 
$$
\tilde v_{j}\lebn \restrict Q \wstar \left( \overline{\nu}_x + \overline{\nu}_{x}^{\infty}\frac{\dif \lambda}{\dif \lebn}(x) \right) \lebn \restrict Q 
\quad \mbox{ and } \quad \cala\tilde  v_{j} = 0. 
$$
\end{proposition}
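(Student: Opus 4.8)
The plan is to combine the localization principle for generalized Young measures with a projection--mollification correction exploiting the constant rank of $\cala$, which turns an asymptotically $\cala$-free generating sequence of measures into an exactly $\cala$-free, smooth, $\mathbb{T}_n$-periodic one. For the \emph{localization}, I would take $N^a$ to be the $\lebn$-null set off which a point $x_0$ is simultaneously: an $\LL^1$-Lebesgue point of the weakly-$*$ measurable oscillation map $x\mapsto\nu_x$ (tested against a fixed countable dense family of integrands); a Lebesgue--Besicovitch differentiability point of $\lambda$ relative to $\lebn$, so that $\tfrac{\dif\lambda}{\dif\lebn}(x_0)$ exists and $\limsup_{r\downarrow0}r^{-n}\lambda^s(B(x_0,r))=0$; and a point at which the measures $\langle\nu^\infty_x,\psi\rangle\,\dif\lambda(x)$, for $\psi$ ranging over a fixed countable dense subset of $\CC(\SV)$, differentiate relative to $\lebn$ with density $\tfrac{\dif\lambda}{\dif\lebn}(x_0)\langle\nu^\infty_{x_0},\psi\rangle$. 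Fixing $x_0\notin N^a$ and considering the rescalings to $\mathbb{T}_n$ given by $w_j^{(r)}(E):=r^{-n}v_j(x_0+rE)$, a standard blow-up and diagonal argument --- in which $\lambda^s$ leaves no trace because its upper density vanishes --- produces a subsequence and $r_j\downarrow0$ so that $w_j:=w_j^{(r_j)}$ generates $\nu(x_0)$; moreover, since $\cala$ is $k$-homogeneous, $\cala w_j^{(r)}$ is, up to a factor bounded for $r$ fixed, the corresponding rescaling of $\cala v_j$, so $\|\cala w_j^{(r)}\|_{\sobo^{-k,q}(\mathbb{T}_n)}\le C_r\,\|\cala v_j\|_{\sobo^{-k,q}(\Omega)}\to0$ as $j\to\infty$, and slowing the rate $r_j\downarrow0$ if necessary we also arrange $\cala w_j\to0$ in $\sobo^{-k,q}(\mathbb{T}_n)$. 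I write $b:=\overline{\nu}_{x_0}+\tfrac{\dif\lambda}{\dif\lebn}(x_0)\,\overline{\nu}^\infty_{x_0}\in\V$ for the constant barycentre of $\nu(x_0)$.

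For the \emph{correction to an exactly $\cala$-free sequence} I would introduce the Fourier multiplier operator $\mathbb{P}$ on $\mathbb{T}_n$ whose symbol at $\xi\in\mathbb{Z}^n\setminus\{0\}$ is the orthogonal projection of $\V$ onto $\ker\cala(\xi)$, and the identity at $\xi=0$. By the constant rank hypothesis $\xi\mapsto\mathbb{P}(\xi)$ is smooth and $0$-homogeneous on $\R^n\setminus\{0\}$, so $\mathbb{P}$ is bounded on $\LL^r(\mathbb{T}_n)$ for all $1<r<\infty$; and, with $\cala(\xi)^\dagger$ the Moore--Penrose pseudo-inverse, the pointwise identity $\mathrm{Id}-\mathbb{P}(\xi)=\cala(\xi)^\dagger\cala(\xi)$ exhibits $\mathrm{Id}-\mathbb{P}=\mathbb{Q}\circ\cala$ for a $(-k)$-homogeneous multiplier $\mathbb{Q}$, hence a bounded operator $\sobo^{-k,q}(\mathbb{T}_n)\to\LL^q(\mathbb{T}_n)$. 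Therefore $\cala(\mathbb{P}w_j)=0$ exactly, while
$$
\|w_j-\mathbb{P}w_j\|_{\LL^1(\mathbb{T}_n)}\le C\,\|w_j-\mathbb{P}w_j\|_{\LL^q(\mathbb{T}_n)}=C\,\|\mathbb{Q}\cala w_j\|_{\LL^q(\mathbb{T}_n)}\le C'\,\|\cala w_j\|_{\sobo^{-k,q}(\mathbb{T}_n)}\to0,
$$
so $\mathbb{P}w_j$ is again a bounded $\V$-valued measure on $\mathbb{T}_n$, differing from $w_j$ by an $\LL^1$-null sequence of functions.

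Finally, for \emph{smoothing}, I would fix a nonnegative mollifier $\rho$ on $\mathbb{T}_n$ and use the standard fact that $\epy_{w*\rho_\varepsilon}\to\epy_w$ weak-$*$ in $\mathbb{E}(\mathbb{T}_n,\V)^*$ as $\varepsilon\downarrow0$ for every fixed $w\in\mathcal{M}(\mathbb{T}_n,\V)$ (a consequence of the strict convergence of the mollifications and Reshetnyak-type continuity) to choose $\varepsilon_j\downarrow0$ so fast that $w_j*\rho_{\varepsilon_j}\toY\nu(x_0)$, and then set
$$
\tilde v_j:=\mathbb{P}\bigl(w_j*\rho_{\varepsilon_j}\bigr)=(\mathbb{P}w_j)*\rho_{\varepsilon_j}\in\CC^\infty(\mathbb{T}_n,\V).
$$
These maps are smooth (since $w_j*\rho_{\varepsilon_j}$ is smooth and $\mathbb{P}$ preserves smoothness) and satisfy $\cala\tilde v_j=0$ (since $\cala\mathbb{P}\equiv0$). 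Because $\mathbb{P}$ commutes with convolution, $\tilde v_j=w_j*\rho_{\varepsilon_j}-\bigl((\mathrm{Id}-\mathbb{P})w_j\bigr)*\rho_{\varepsilon_j}$ with $\|((\mathrm{Id}-\mathbb{P})w_j)*\rho_{\varepsilon_j}\|_{\LL^1}\le\|(\mathrm{Id}-\mathbb{P})w_j\|_{\LL^1}\to0$, and since perturbing a generating sequence of functions by an $\LL^1$-null sequence does not change the generated Young measure (test against $\Phi\in\mathbb{E}$ with $T\Phi$ Lipschitz, for which $|\langle\epy_{u-g},\Phi\rangle-\langle\epy_u,\Phi\rangle|\le C\|g\|_{\LL^1}$, then invoke Lemma~\ref{lem:identify}), this yields $\tilde v_j\toY\nu(x_0)$. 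Testing this convergence against linear integrands $\Phi(x,z)=\psi(x)\langle a,z\rangle$ ($a\in\V$, so $\Phi^\infty=\Phi$) gives $\tilde v_j\lebn\restrict Q\wstar b\,\lebn\restrict Q$, which is the asserted barycentre convergence, completing the proof.

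The hard part is the localization step: one must verify that the blow-ups $w_j^{(r_j)}$ generate the \emph{homogeneous} Young measure $\nu(x_0)$ with the absolutely continuous concentration measure $\tfrac{\dif\lambda}{\dif\lebn}(x_0)\lebn\restrict Q$ --- that is, that $\lambda^s$ leaves no residue in the tangent object --- and that the diagonal extraction can be performed so as to keep $\cala w_j\to0$ in $\sobo^{-k,q}(\mathbb{T}_n)$. This is precisely the localization principle for generalized Young measures; once it is in hand, the projection and smoothing steps are routine consequences of the constant rank structure of $\cala$ together with the elementary stability of Young measures under $\LL^1$ perturbations and under mollification.
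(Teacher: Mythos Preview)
The paper does not give its own proof of this proposition; it is quoted from \cite[Proposition~2.24]{ARDPR} and used as a black box. Your outline --- blow-up localization at Lebesgue points, then the Fonseca--M\"uller constant-rank projection $\mathbb{P}$ of \cite{FM99}, then mollification --- is precisely the strategy used in that reference, so in spirit you are reproducing the cited argument rather than offering an alternative.

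There is, however, one genuine gap in your sketch. You assert $\|\cala w_j^{(r)}\|_{\sobo^{-k,q}(\mathbb{T}_n)}\le C_r\|\cala v_j\|_{\sobo^{-k,q}(\Omega)}$ on the grounds that $\cala w_j^{(r)}$ is a rescaling of $\cala v_j$. That identity holds for $\cala$ computed distributionally on the \emph{open} cube $Q$, but not for $\cala$ computed on the torus $\mathbb{T}_n$: the periodic identification of opposite faces introduces boundary contributions supported on $\partial Q$ which are not controlled by $\cala v_j$ and need not vanish in $\sobo^{-k,q}(\mathbb{T}_n)$. Since the Fourier projection $\mathbb{P}$ and the factorization $\mathrm{Id}-\mathbb{P}=\mathbb{Q}\circ\cala$ are only available on $\mathbb{T}_n$, you cannot simply retreat to the open cube either.

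The standard remedy --- and the one used in \cite{ARDPR,FM99} --- is to exploit that after blow-up one already has $w_j\wstar b\,\lebn$ with $b$ constant: one first mollifies (so as to work with functions), then multiplies $w_j-b$ by a smooth cutoff $\chi\in\CC^\infty_c(Q)$ equal to $1$ on a slightly smaller cube, and periodizes $\chi(w_j-b)+b$. Now the boundary terms vanish identically; $\cala$ of the cutoff version equals $\chi\,\cala w_j$ plus commutator terms involving $\partial^\gamma\chi$ and lower-order pieces of $w_j-b$, which tend to zero in $\sobo^{-k,q}$ because $w_j-b\wstar 0$; and the $\LL^1$-discrepancy introduced by the cutoff is governed by the mass of $w_j-b$ on the boundary layer $\{\chi<1\}$, which is small by weak-$*$ convergence and a suitable choice of layer width. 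With this amendment your argument goes through. In practice it is cleanest to mollify $v_j$ on $\Omega$ \emph{before} blowing up and cutting off, so that all objects are smooth functions when the commutator estimate is performed.
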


\subsection{The Hahn--Banach argument}\label{sec:HB}
Fix $z \in \V$ and denote by $Y$ the set of all pairs $(\nu^{0}, \nu^{\infty} ) \in \Meas ( \V ) \times \mathcal{M}^{+}( \SV )$
for which we can find a sequence $(  u_{j} )$ in $\CC^{\infty}_{c}( \X,\U )$ such that, as $j \to \infty$, $\|  u_j \|_{\sobo^{l-1,1}} \to 0$ and
\begin{equation}\label{homgen}
\int_{\X} \! \Phi ( z+ \bala u_{j}(x)) \, \dif x \to \int_{\V} \! \Phi \, \dif \nu^{0} + \int_{\SV} \! \Phi^{\infty} \, \dif \nu^{\infty}
\quad \forall \, \Phi \in \HH .
\end{equation}
Obviously, we can consider $Y$ as a subset of $(\HH^{\ast})^+$ and we clarify that $\mathcal{M}^+(\mathbb{S}_\V)$ denotes the set of positive Radon measures on
$\mathbb{S}_\V$, whereas $\mathcal{M}_1^+(\V)$ denotes the set of probability measures on $\V$. The above definition (\ref{homgen}) of $Y$ can therefore also
be stated in terms of the Kantorovich metric: $\| \epy_{z +\bala u_j}-(\nu^{0},\nu^{\infty}) \|_{\mathrm{K}} \to 0$ as $j \to \infty$.

\begin{remark}\label{bettergenerator}
Using a standard exhaustion argument it is not difficult to show that each $\nu \in Y$ can be generated by sequences with apparently much better
convergence properties: Let $\nu = ( \nu^{0},\nu^{\infty}) \in Y$ and let $\omega \colon [0, \infty ) \to [0, \infty )$ be a sublinear modulus of
continuity (so $\omega (0)=0$, $\omega$ is continuous, increasing, concave and $\omega (t)/t \to \infty$ as $t \searrow 0$). Then there exists
a sequence $( \varphi_{j} )$ in $\CC^{\infty}_{c}( \X , \U )$ so $\| \varphi_{j} \|_{\WW^{l-1,\infty}} \to 0$, $\| \nu - \epy_{z + \bala \varphi_{j}} \|_{\mathrm{K}} \to 0$ and
$$
\sup_{\stackrel{x, y \in \X}{x \neq y}} \frac{\bigl| \nabla^{l-1}\varphi_{j}(x)-\nabla^{l-1}\varphi_{j}(y)\bigr|}{\omega \bigl( |x-y| \bigr)} \to 0.
$$
\end{remark}

\begin{lemma}\label{test}
The family $\bigl\{ \epy_{z+ \bala  u} : \,  u \in \CC^{\infty}_{c}( \X , \U) \bigr\}$ is a weakly-* dense subset of $Y$.
\end{lemma}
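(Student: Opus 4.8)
The plan is to unwind the word ``dense'' into its two halves: that the family $A:=\{\epy_{z+\bala u}:u\in\CC^\infty_c(\X,\U)\}$ is contained in $Y$, and that every element of $Y$ lies in the weak-$*$ closure of $A$. The second half is essentially free: by the very definition of $Y$, any $\nu\in Y$ comes equipped with a sequence $(u_j)\subset\CC^\infty_c(\X,\U)$ satisfying $\|u_j\|_{\sobo^{l-1,1}}\to 0$ and $\epy_{z+\bala u_j}\wstar\nu$ in the sense of \eqref{homgen}; since each $\epy_{z+\bala u_j}$ belongs to $A$, the point $\nu$ is a weak-$*$ limit of members of $A$, hence lies in $\overline A$. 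So the entire content of the lemma is the inclusion $A\subset Y$: for a \emph{fixed} $u\in\CC^\infty_c(\X,\U)$ one must exhibit a sequence of potentials, of vanishing $\sobo^{l-1,1}$-norm, that generates the elementary Young measure $\epy_{z+\bala u}$.

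For this I would use the standard self-similar (periodization) construction. Fix $u$ and, for $N\in\N$, partition $\X$ into $N^n$ congruent subcubes; on each subcube $Q_i$ place a copy of $u$ precomposed with the affine bijection $T_i\colon Q_i\to\X$ (so $DT_i=N\,\mathrm{Id}$) and scaled in amplitude by $N^{-l}$, setting $u_N=0$ elsewhere. Since $u$ is smooth with compact support in $\X$, each such copy vanishes near $\partial Q_i$, so $u_N\in\CC^\infty_c(\X,\U)$. The key algebraic observation is that $\bala$ is homogeneous of order exactly $l$: each of the $l$ derivatives it contains generates a factor $N$ under precomposition with $T_i$, which cancels the amplitude factor $N^{-l}$, so that on each $Q_i$ one has $\bala u_N=(\bala u)\circ T_i$ pointwise.

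Two elementary computations — both changes of variables on the subcubes — then close the argument. First, for every $\Phi\in\HH$ the integral $\int_{Q_i}\Phi(z+\bala u_N)\,\dif x$ equals $N^{-n}\int_\X\Phi(z+\bala u)\,\dif y$, so summing the $N^n$ contributions gives $\epy_{z+\bala u_N}=\epy_{z+\bala u}$ in $\HH^*$ for \emph{every} $N$; in particular \eqref{homgen} holds with $\nu^\infty=0$, consistently with the fact that a smooth field carries no concentration. Second, for $0\le m\le l-1$ the same substitution yields $\|D^m u_N\|_{\LL^1(\X)}=N^{m-l}\|D^m u\|_{\LL^1(\X)}$, which tends to $0$ because $m-l\le-1$; summing over $m$ gives $\|u_N\|_{\sobo^{l-1,1}}\to 0$. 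Hence $(u_N)$ witnesses $\epy_{z+\bala u}\in Y$, completing the proof.

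The single point deserving attention is the exponent bookkeeping in the last step: one must control exactly the derivatives of order $\le l-1$ of $u_N$, and it is precisely the order-$l$ derivatives — the ones that do \emph{not} decay under the rescaling — that are fed into $\bala$. This is why the amplitude is calibrated by the factor $N^{-l}$, and it is the only genuinely load-bearing observation; no fine-structure result for $\cala$-free measures (such as Lemmas~\ref{rankonetheorem} or~\ref{1homo}) is needed here, only the scaling behaviour of the homogeneous operator $\bala$.
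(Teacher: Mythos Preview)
Your proof is correct and follows essentially the same route as the paper: both reduce to showing $\epy_{z+\bala u}\in Y$ via the self-similar periodization $u_N(x)=\sum_i N^{-l}u(N(x-x_i))$, noting that $\bala u_N$ reproduces $\bala u$ on each subcube so that $\epy_{z+\bala u_N}=\epy_{z+\bala u}$ for all $N$. The only cosmetic difference is in the norm decay: you compute $\|D^{m}u_N\|_{\LL^1}=N^{m-l}\|D^{m}u\|_{\LL^1}$ directly for every $0\le m\le l-1$, whereas the paper controls only the top order $m=l-1$ and then appeals iteratively to Poincar\'e's inequality.
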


\begin{proof}
It suffices to show that $\epy_{z+\bala u} \in Y$ for each $ u \in \CC^{\infty}_{c}( \X , \U)$. But this is a consequence of a standard
exhaustion argument that runs as follows: We extend $ u$ to $\R^n$ by zero without changing notation and subdivide each side of $\X$ into
$j\in\N$ disjoint congruent open intervals, and consider the resulting
mesh of $j^n$ disjoint congruent cubes. We fix an arrangement of the cubes, say
$$
\X_i = x_i + j^{-1}\X , \quad i \in \{ 1, \, \dots \, , \, j^n \} ,
$$
and define
$$
\phi_j(x)=\sum_{i=1}^{j^n}j^{-l} u(j(x-x_i)).
$$
It is then simple to compute that
\begin{align*}
\int_{\X}\Phi(z+\bala u)\dif x&=\int_{\X} \Phi(z+\bala \phi_{j})\dif x\\
\|D^{l-1}\phi_{j}\|_{\lebe^{1}(\X)}&=j^{-1}\|D^{l-1}u\|_{\lebe^{1}(\X)}\rightarrow 0\quad\text{as }j\rightarrow\infty.
\end{align*}
The conclusion follows by applying Poincar\'e's inequality iteratively.
\end{proof}

\begin{lemma}\label{closed}
The set $Y$ is weakly-$\mbox{}^\ast$ closed in $\HH^\ast$.
\end{lemma}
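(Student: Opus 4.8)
The plan is to reduce the claim to a one-line diagonal extraction, the point being that on the positive cone the weak-$*$ topology is already metrizable. First, $(\HH^{\ast})^{+}$ is weak-$*$ closed in $\HH^{\ast}$, being the intersection of the weak-$*$ closed half-spaces $\{\ell\in\HH^{\ast}:\ell(\Phi)\ge 0\}$ over all $\Phi\in\HH$ with $\Phi\ge 0$; since $Y\subseteq(\HH^{\ast})^{+}$, it suffices to prove that $Y$ is closed in the relative weak-$*$ topology of $(\HH^{\ast})^{+}$. By the discussion leading to \eqref{kantonh}---which rests on $T$ being an isometric isomorphism of $\HH$ onto $\CC(\overline{\BBV})$, on $\overline{\BBV}$ being a compact metric space, and on Lemma~\ref{weakstarkanto}---the relative weak-$*$ topology on $(\HH^{\ast})^{+}$ coincides with the topology of the push-forward Kantorovich metric $\mathrm{d}_{\mathrm{K}}$. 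As this is a metric topology, I only need to show that $Y$ is sequentially $\mathrm{d}_{\mathrm{K}}$-closed.

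So suppose $\ell_{m}\in Y$ and $\mathrm{d}_{\mathrm{K}}(\ell_{m},\ell)\to 0$ for some $\ell\in(\HH^{\ast})^{+}$. By the definition of $Y$---recorded just after \eqref{homgen} in the form of a Kantorovich limit---for each $m$ there is a sequence $(u^{m}_{j})_{j}\subset\CC^{\infty}_{c}(\X,\U)$ with $\|u^{m}_{j}\|_{\sobo^{l-1,1}}\to 0$ and $\mathrm{d}_{\mathrm{K}}(\epy_{z+\bala u^{m}_{j}},\ell_{m})\to 0$ as $j\to\infty$. Choose $j(m)$ so large that both quantities are $<1/m$ and put $w_{m}:=u^{m}_{j(m)}$. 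Then $\|w_{m}\|_{\sobo^{l-1,1}}\to 0$ and, by the triangle inequality,
$$
\mathrm{d}_{\mathrm{K}}(\epy_{z+\bala w_{m}},\ell)\le\mathrm{d}_{\mathrm{K}}(\epy_{z+\bala w_{m}},\ell_{m})+\mathrm{d}_{\mathrm{K}}(\ell_{m},\ell)\le\tfrac{1}{m}+\mathrm{d}_{\mathrm{K}}(\ell_{m},\ell)\to 0.
$$
It remains to see that $\ell$ has the structure of an element of $Y$: testing against $\Phi\equiv 1$ (so that $\Phi^{\infty}\equiv 0$) gives $\ell^{0}(\V)=\lim_{m}\int_{\X}1\,\dif x=1$, testing against $\Phi=|\cdot|\in\HH$ gives $\int_{\V}|z|\,\dif\ell^{0}\le\ell(|\cdot|)<\infty$, and $\ell\ge 0$ forces $\ell^{0}\ge 0$, $\ell^{\infty}\ge 0$; hence $\ell=(\ell^{0},\ell^{\infty})\in\Meas(\V)\times\mathcal{M}^{+}(\SV)$ and the sequence $(w_{m})_{m}$ witnesses $\ell\in Y$.

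The only point requiring care is the opening reduction: one must ensure that $\mathrm{d}_{\mathrm{K}}$ metrizes the relative weak-$*$ topology on the \emph{whole} cone $(\HH^{\ast})^{+}$ rather than merely on bounded subsets, since $Y$ is itself unbounded in $\HH^{\ast}$ (there is no uniform control on $\|\bala u_{j}\|_{\lebe^{1}}$, as is clear already from the cube-subdivision construction used to prove Lemma~\ref{test}). This global metrizability is precisely what is furnished by Lemma~\ref{weakstarkanto} together with the identification of $\HH$ with $\CC$ of a compact metric space; granting it, the approximation argument degenerates to the diagonal choice above, because membership in $Y$ was phrased from the outset as convergence in the metrizable distance $\mathrm{d}_{\mathrm{K}}$.
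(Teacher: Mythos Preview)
Your proof is correct, and it takes a somewhat different route from the paper's. You exploit the metrizability of the relative weak-$*$ topology on the positive cone (Lemma~\ref{weakstarkanto}, transported via $T^{\ast}$) to reduce weak-$*$ closedness to sequential $\mathrm{d}_{\mathrm{K}}$-closedness, after which a straight diagonal extraction suffices. The paper instead works directly with an arbitrary point $\mu$ in the weak-$*$ closure: it fixes a countable dense subset $\{\Phi_j\}$ of $\HH$ (with $\Phi_0=|\cdot|$ to force $\lebe^1$-bounds), selects for each $i$ a map $u_i\in\CC^\infty_c(\X,\U)$ with $\|u_i\|_{\sobo^{l-1,1}}<1/i$ whose elementary Young measure is $1/i$-close to $\mu$ on $\Phi_0,\dots,\Phi_i$, and then invokes Banach--Alaoglu plus separability of $\HH$ to identify the limit of any subsequence with $\mu$.

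What each buys: the paper's argument is self-contained at the level of $\HH^\ast$ and never appeals to Lemma~\ref{weakstarkanto}; yours actually cashes in that lemma (which the paper proved but does not otherwise use in this step) and is correspondingly shorter, with the convergence \eqref{homgen} and membership in $Y$ both phrased as $\mathrm{d}_{\mathrm{K}}$-limits, so the closure proof collapses to a triangle inequality. Your final paragraph correctly isolates the delicate point---that $Y$ is unbounded, so one genuinely needs metrizability on the whole cone, not just on norm-bounded sets---and this is precisely what Lemma~\ref{weakstarkanto} delivers.
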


\begin{proof}
Let $\mu = (\mu^0, \mu^\infty ) \in \overline{Y}^{w^\ast}$. Let $\{ \Phi_j \}_{j \in \N}$ be a countable and dense subset of $\HH$ and put $\Phi_0 = | \cdot |$.
Now for each $i \in \N$ we may select $ u_i \in \CC^{\infty}_{c}( \X, \U )$ such that
\begin{equation}\label{closed1}
\|  u_{i} \|_{\sobo^{l-1,1}} < \frac{1}{i} \quad , \quad \max_{0 \leq j \leq i} \left| \int_{\X} \! \Phi_{j} (z + \bala u_{i}) \, \dif x
- \mu (\Phi_j ) \right| < \frac{1}{i}.
\end{equation}
Since $\Phi_{0} = | \cdot |$ is included above we infer that the sequence $( \xi +\bala  u_i )$ is bounded in $\lebe^1 (\X,\V)$ and so,
by Banach-Alaoglu's compactness theorem and separability of $\HH$, each of its subsequences admit a further subsequence (not relabelled) so
\begin{equation}\label{closed2}
\int_{\X} \! \Phi (z+ \bala u_i ) \, \dif x \to \ell (\Phi ) \quad \forall \Phi \in \HH 
\end{equation}
for some $\ell \in \HH^\ast$ that at this stage of course can depend on the particular subsequence. But the density of $\{ \Phi_j \}_{j \in \N_0}$ and the
identification of $\HH^\ast$ with pairs of measures allow us by virtue of (\ref{closed1}) to conclude that $\ell = \mu$ and so that (\ref{closed2})
in fact holds true for the full sequence. Taken together with (\ref{closed1}) we have shown that $\mu \in Y$, and the proof is complete.
\end{proof}

\begin{lemma}\label{cvx}
The set $Y$ is convex.
\end{lemma}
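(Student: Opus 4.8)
The plan is to obtain convexity of $Y$ from its weak-$*$ closedness (Lemma~\ref{closed}), reducing everything to convex combinations of \emph{elementary} generators. The first and only substantial step is the following claim: \emph{for all $u,w\in\CC^\infty_c(\X,\U)$ and all $t\in[0,1]$ one has $t\,\epy_{z+\bala u}+(1-t)\,\epy_{z+\bala w}\in Y$.} Granting this, let $\alpha,\beta\in Y$ and $t\in[0,1]$. By definition of $Y$ pick $(u_j),(w_j)\subset\CC^\infty_c(\X,\U)$ with $\|u_j\|_{\sobo^{l-1,1}}\to0$, $\|w_j\|_{\sobo^{l-1,1}}\to0$ and, for every $\Phi\in\HH$, $\int_\X\Phi(z+\bala u_j)\dif x\to\alpha(\Phi)$ and $\int_\X\Phi(z+\bala w_j)\dif x\to\beta(\Phi)$ (this is just \eqref{homgen}, i.e.\ weak-$*$ convergence $\epy_{z+\bala u_j}\wstar\alpha$, $\epy_{z+\bala w_j}\wstar\beta$). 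By the claim, $t\,\epy_{z+\bala u_j}+(1-t)\,\epy_{z+\bala w_j}\in Y$ for each $j$, and this sequence converges weakly-$*$ to $t\alpha+(1-t)\beta$; hence $t\alpha+(1-t)\beta\in Y$ by Lemma~\ref{closed}, which is the asserted convexity.

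To prove the claim I would run the exhaustion from the proof of Lemma~\ref{test} with two interleaved building blocks. Extend $u,w$ by zero to $\R^n$; for $m\in\N$ subdivide $\X$ into $m^n$ congruent subcubes $\X_i=x_i+m^{-1}\X$, pick $A_m\subset\{1,\dots,m^n\}$ with $\#A_m=\lfloor tm^n\rfloor$, set $B_m=\{1,\dots,m^n\}\setminus A_m$, and define
$$
v_m(x)=
\begin{cases}
m^{-l}\,u\bigl(m(x-x_i)\bigr),& x\in\X_i,\ i\in A_m,\\[3pt]
m^{-l}\,w\bigl(m(x-x_i)\bigr),& x\in\X_i,\ i\in B_m.
\end{cases}
$$
Since $u,w$ are compactly supported in $\X$, each rescaled copy sits strictly inside its own subcube and away from $\partial\X$, so $v_m\in\CC^\infty_c(\X,\U)$.

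The two needed estimates are then routine. As $\bala$ is homogeneous of order $l$, one has $\bala\bigl[m^{-l}g(m(\cdot-x_i))\bigr]=(\bala g)(m(\cdot-x_i))$ with no leftover factor, so after the change of variables $y=m(x-x_i)$,
$$
\int_\X\Phi(z+\bala v_m)\dif x=\frac{\#A_m}{m^n}\int_\X\Phi(z+\bala u)\dif y+\frac{\#B_m}{m^n}\int_\X\Phi(z+\bala w)\dif y\qquad\text{for every }\Phi\in\HH .
$$
As $m\to\infty$ the right-hand side converges to $t\int_\X\Phi(z+\bala u)\dif y+(1-t)\int_\X\Phi(z+\bala w)\dif y$, which equals $\bigl(t\,\epy_{z+\bala u}+(1-t)\,\epy_{z+\bala w}\bigr)(\Phi)$ since $z+\bala u$ and $z+\bala w$ are smooth, so by \eqref{eq:elem_YM} the associated elementary Young measures have no singular part. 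Likewise $D^{l-1}v_m=m^{-1}(D^{l-1}g)(m(\cdot-x_i))$ on $\X_i$ with $g\in\{u,w\}$, so $\|D^{l-1}v_m\|_{\lebe^1(\X)}\le m^{-1}\max\{\|D^{l-1}u\|_{\lebe^1},\|D^{l-1}w\|_{\lebe^1}\}\to0$; iterating Poincar\'e's inequality (as in Lemma~\ref{test}) then gives $\|v_m\|_{\sobo^{l-1,1}}\to0$. Hence $(v_m)$ generates $t\,\epy_{z+\bala u}+(1-t)\,\epy_{z+\bala w}$ in the sense of \eqref{homgen}, proving the claim. The one point that is not pure bookkeeping is matching the dilation exponent $m^{-l}$ with the order $l$ of $\bala$; everything else is parallel to Lemma~\ref{test}. (Alternatively one could build a single generating sequence for $t\alpha+(1-t)\beta$ directly by a diagonal argument over the generators of $\alpha$ and $\beta$, but routing through Lemma~\ref{closed} avoids that extraction.)
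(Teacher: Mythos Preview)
Your proof is correct and follows essentially the same route as the paper: reduce via Lemmas~\ref{test} and~\ref{closed} to convex combinations of elementary generators $\epy_{z+\bala u}$, then realize such a combination by tiling $\X$ with rescaled copies of the two test functions. The only variation is that the paper restricts to $t=(p/q)^n$ and builds a \emph{single} $\phi$ with $\epy_{z+\bala\phi}=(1-t)\epy_{z+\bala u_0}+t\epy_{z+\bala u_1}$ exactly (then appeals to density of such $t$), whereas you handle all $t$ at once by taking $\#A_m=\lfloor tm^n\rfloor$ and passing to the limit in $m$; both are equally valid and neither is materially simpler.
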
  

\begin{proof}
Let $\nu_0$, $\nu_1 \in Y$, $t \in (0,1)$ and $\nu_t = (1-t)\nu_0 + t\nu_1$. 
In view of Lemmas \ref{test} and \ref{closed} it suffices to show that $\nu_t \in Y$ when
$$
\nu_0 = \epy_{z+ \bala u_{0}}, \, \nu_1 = \epy_{z+ \bala u_{1}} \, \mbox{ and } \, t=\left( \tfrac{p}{q} \right)^{n},
$$
where $u_0$, $ u_1 \in \CC^{\infty}_{c}( \X, \U )$ and $p$, $q \in \N$ are coprime.
In fact, we shall show that $\nu_t = \epy_{z+\bala \phi}$ for some $\phi \in \CC^{\infty}_{c}( \X, \U )$.
	
In order to see this we subdivide each side of $\X$ into $q$ disjoint congruent half-open intervals, and consider the resulting
mesh of $q^n$ disjoint congruent cubes. We fix an arrangement of the cubes, say
$$
\X_i = x_i + q^{-1}\X , \quad i \in \{ 1, \, \dots \, , \, q^n \} ,
$$
and define
$$
\phi (x) = \sum_{i=1}^{p^n} q^{-l} u_{1} \bigl( q(x-x_{i}) \bigr) + \sum_{i=p^{n}+1}^{q^n} q^{-l} u_{0} \bigl( q(x-x_{i}) \bigr)
\quad (x \in \X )
$$
where we extend $ u_0$, $ u_1$ to $\Rn \setminus \X$ by $0$. Clearly, $\phi \in \CC^{\infty}_{c}( \X , \U)$ and for $\Phi \in \HH$ we
check that $\epy_{z+ \bala \phi}( \Phi ) = \nu_t ( \Phi )$, and therefore $\nu_t \in Y$ by Lemma \ref{test}.
\end{proof}

For a continuous integrand $f \colon \V \to \R$ of linear growth we define the upper recession integrand by
$$
f^{\infty}(z) = \limsup_{\stackrel{t \to \infty}{z^{\prime} \to z}} \frac{f(tz^{\prime})}{t} \quad (z \in \V )
$$
Hereby $f^{\infty} \colon \V \to \R$ is a positively $1$-homogeneous integrand of linear growth. When the wave cone for $\cala$
spans the space $\V$, then $\cala$-quasiconvex integrands $f$ of linear growth are Lipschitz continuous. In that case, the upper
recession integrand $f^{\infty}$ is also positively $1$-homogeneous, Lipschitz and $\cala$-quasiconvex. Furthermore in this situation
we also have the simpler formula
$$
f^{\infty}(z) = \limsup_{t \to \infty} \frac{f(tz)}{t} \quad (z \in \V ).
$$
\begin{lemma}\label{hb}
Let $\nu = (\nu^0 , \nu^\infty ) \in \Meas ( \V ) \times \mathcal{M}^{+}(\SV )$ and $z \in \V$. Then $\nu \in Y$ if, and only if,
$\overline{\nu}^{0}+\overline{\nu}^{\infty} = z$ and
$$
\int_{\V} \! f \, \dif \nu^0 + \int_{\SV} \! f^{\infty} \, \dif  \nu^{\infty} \geq f(z)
$$
holds for all $\cala$-quasiconvex integrands $f \colon \V \to \R$ of linear growth.
\end{lemma}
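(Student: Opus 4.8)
\emph{The plan} is to prove the two implications separately: the ``only if'' direction by a direct argument using the relaxation formula~\eqref{eq:relax_formula_YM} for $\cala$-quasiconvex envelopes, and the ``if'' direction by Hahn--Banach separation, exploiting that $Y$ is a nonempty convex weak-$*$ closed subset of $\HH^*$ (Lemmas~\ref{cvx} and~\ref{closed}) together with the finiteness statement of Lemma~\ref{lem:finite_valued}.

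\emph{Necessity.} Suppose $\nu=(\nu^0,\nu^\infty)\in Y$, so $\epy_{z+\bala u_j}\wstar\nu$ in $\HH^*$ for some $u_j\in\CC^\infty_c(\X,\U)$ with $\|u_j\|_{\sobo^{l-1,1}}\to0$. First I would test this convergence against an arbitrary linear $\ell\colon\V\to\R$, which belongs to $\HH$ with $\ell^\infty=\ell$: since $\int_\X\bala u_j\,\dif x=0$ we get $\langle\epy_{z+\bala u_j},\ell\rangle=\ell(z)$ for every $j$, whence $\ell(\overline\nu^0+\overline\nu^\infty)=\ell(z)$ for all such $\ell$, that is $\overline\nu^0+\overline\nu^\infty=z$. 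For the Jensen inequality, let $f$ be $\cala$-quasiconvex of linear growth; then $f=f^{\mathrm{qc}}$, and combining~\eqref{eq:relax_formula_YM} with the rescaling trick from the proof of Lemma~\ref{test} (which replaces a given test map by one whose $(l-1)$-st derivative has arbitrarily small $\lebe^\infty$-norm, without altering $\int_\X f(z+\bala\,\cdot\,)\,\dif x$) yields $f(z)\le\int_\X f(z+\bala u)\,\dif x$ for every $u\in\CC^\infty_c(\X,\U)$, hence $f(z)\le\liminf_j\int_\X f(z+\bala u_j)\,\dif x$. The remaining, and delicate, step is to bound the $\limsup$: transported along the isometry $T$, the functionals $\epy_{z+\bala u_j}$ correspond to positive measures $\mu_j$ on the compact space $\overline\BBV$, supported in $\BBV(0,1)$, with $\mu_j\wstar\mu$ (the measure representing $\nu$) and $\int_\X f(z+\bala u_j)\,\dif x=\langle\mu_j,Tf\rangle$. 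Extending $Tf$ to $\overline\BBV$ by $Tf|_{\SV}=f^\infty$ produces a bounded and \emph{upper semicontinuous} function on $\overline\BBV$ --- here the linear growth, and the Lipschitz continuity of $f$ and $f^\infty$ valid when $\Lambda_\cala$ spans, ensure that the interior limit superior towards a boundary point does not exceed the boundary value. Therefore $\limsup_j\langle\mu_j,Tf\rangle\le\langle\mu,Tf\rangle=\int_\V f\,\dif\nu^0+\int_{\SV}f^\infty\,\dif\nu^\infty$, and combining with the earlier lower bound gives the asserted inequality.

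\emph{Sufficiency.} Assume the barycentre identity and all the Jensen inequalities, and suppose toward a contradiction that $\nu\notin Y$. By the Hahn--Banach separation theorem, applied in $\HH^*$ with its weak-$*$ topology (whose dual is $\HH$), there exist $\Phi\in\HH$ and $\alpha\in\R$ with $\langle\nu,\Phi\rangle<\alpha\le\langle\mu',\Phi\rangle$ for all $\mu'\in Y$. Taking $\mu'=\epy_{z+\bala u}$ (which lies in $Y$ by Lemma~\ref{test}) and then the infimum over $u\in\CC^\infty_c(\X,\U)$ with $\|D^{l-1}u\|_{\lebe^\infty}$ small, formula~\eqref{eq:relax_formula_YM} gives $\alpha\le\Phi^{\mathrm{qc}}(z)$; in particular $\Phi^{\mathrm{qc}}(z)>-\infty$, so by Lemma~\ref{lem:finite_valued} (using that $\Lambda_\cala$ spans) $\Phi^{\mathrm{qc}}$ is a real-valued $\cala$-quasiconvex integrand of linear growth. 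Applying the Jensen hypothesis to $f=\Phi^{\mathrm{qc}}$ and using $\Phi^{\mathrm{qc}}\le\Phi$ (hence $(\Phi^{\mathrm{qc}})^\infty\le\Phi^\infty$), I obtain
$$
\alpha\le\Phi^{\mathrm{qc}}(z)\le\int_\V\Phi^{\mathrm{qc}}\,\dif\nu^0+\int_{\SV}(\Phi^{\mathrm{qc}})^\infty\,\dif\nu^\infty\le\int_\V\Phi\,\dif\nu^0+\int_{\SV}\Phi^\infty\,\dif\nu^\infty=\langle\nu,\Phi\rangle<\alpha,
$$
a contradiction, so $\nu\in Y$.

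\emph{Main obstacle.} The delicate point is the passage to the $\limsup$ in the necessity part. Since a $\cala$-quasiconvex integrand of linear growth need not admit a regular recession, it need not lie in $\HH$, so one cannot feed it directly into the weak-$*$ convergence; the right substitute is to observe that $Tf$, extended by the \emph{upper} recession $f^\infty$ on the sphere, is upper semicontinuous on the compact set $\overline\BBV$, which is precisely what converts weak-$*$ convergence of the lifted measures into the required one-sided inequality. This is cleaner than the alternative of approximating $f$ from above by integrands $f_k\in\HH$, which would force one to establish separately that $f_k^\infty\searrow f^\infty$.
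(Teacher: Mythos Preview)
Your sufficiency argument is essentially identical to the paper's: Hahn--Banach separation in $\HH^*$, the relaxation formula \eqref{eq:relax_formula_YM} to bound the separating level by $\Phi^{\mathrm{qc}}(z)$, Lemma~\ref{lem:finite_valued} to make $\Phi^{\mathrm{qc}}$ a legitimate test integrand, and then the chain $\nu(\Phi)\geq\nu(\Phi^{\mathrm{qc}})\geq\Phi^{\mathrm{qc}}(z)\geq t$.

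The necessity argument is correct but genuinely different. The paper does \emph{not} work with the upper semicontinuous extension of $Tf$; instead it invokes Lemma~\ref{approximationlemma} to produce $\cala$-quasiconvex $f_j\in\HH$ with $f_j\searrow f$, tests the Young measure convergence against each $f_j$ to get $\nu(f_j)\geq f(z)$, and then passes to the limit by monotone convergence. Your route sidesteps that approximation lemma entirely: by transporting along $T$, the generating functionals become positive measures $\mu_j\wstar\mu$ on the compact ball $\overline\BBV$, and the observation that $Tf$ extended by the upper recession $f^\infty$ is bounded and upper semicontinuous (which you correctly justify via the definition $f^\infty(z)=\limsup_{t\to\infty,\,z'\to z}f(tz')/t$ and the Lipschitz continuity of $f^\infty$ on $\SV$) gives the one-sided bound $\limsup_j\langle\mu_j,Tf\rangle\leq\langle\mu,Tf\rangle$ by a Portmanteau-type argument. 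This is more self-contained, since Lemma~\ref{approximationlemma} relies on an external construction; the paper's approach, on the other hand, is more modular and makes the role of the class $\HH$ explicit. A minor remark: your detour through \eqref{eq:relax_formula_YM} and the rescaling trick to obtain $f(z)\leq\int_\X f(z+\bala u)$ is unnecessary---extending $u\in\CC^\infty_c(\X,\U)$ periodically by zero gives an admissible test field in the definition of $\cala$-quasiconvexity directly, which is what the paper uses.
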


\begin{proof}
To prove the \textit{only if} part we fix $\nu \in Y$ and an $\cala$-quasiconvex integrand $f \colon \V \to \R$ of linear growth.
Select a sequence $(  u_j )$ in $\CC^{\infty}_{c}( \X ,\U)$ such that $\|  u_j \|_{\sobo^{l-1,1}} \to 0$ and
$$
\int_{\X} \! \Phi (z + \bala u_{j}) \, \dif x \to  \nu (\Phi )
$$
for each $\Phi \in \HH$. Thus for $\Phi \in \HH$ with $\Phi \geq f$ we get by $\cala$-quasiconvexity of $f$ at $z$ that
$$
\nu ( \Phi )  \geq \limsup_{j \to \infty}\int_{\X} \! f(z + \bala u_{j}) \, \dif x \geq f(z).
$$
To obtain the required bound we use the approximation Lemma \ref{approximationlemma} and the Lebesgue monotone convergence theorem.
	
We turn to the \textit{if} part of the statement. According to Lemmas \ref{closed} and \ref{cvx}, $Y$ is a weakly$\mbox{}^\ast$ closed and convex
subset of $\HH^\ast$, hence by the Hahn--Banach separation theorem we can write $Y = \bigcap H$,
where we take intersection over all weakly$\mbox{}^\ast$ closed half-spaces $H$ in $\HH^\ast$ that contain $Y$. Fix such a half-space $H$; it is
well-known that we can find $\Phi \in \HH$, $t \in \R$ such that
$$
H = \bigl\{ \ell \in \HH^\ast : \, \ell ( \Phi ) \geq t \bigr\} .
$$
Because $Y \subset H$ it follows in particular from Lemma \ref{test} that $\epy_{z + \bala u} \in H$ for all $ u \in \CC^{\infty}_{c}( \X ,\U)$.
Consequently, the inequality
$$
t \leq \epy_{z + \bala u}( \Phi ) = \int_{\X} \! \Phi (z+ \bala u ) \, \dif x
$$
holds for all $ u \in \CC^{\infty}_{c}( \X,\U )$. But then the relaxation formula \eqref{eq:relax_formula_YM} yields $t \leq \Phi^{\mathrm{qc}}(z)$
and it follows from Lemma~\ref{lem:finite_valued} that the envelope $\Phi^{\mathrm{qc}}$ is real-valued, $\cala$-quasiconvex and of linear growth.
Returning to our assumptions on $\nu$ with this information we deduce that
$$
\nu (\Phi ) \geq \nu ( \Phi^{\mathrm{qc}}) \geq \Phi^{\mathrm{qc}}(z ) \geq t,
$$
that is, $\nu \in H$.
\end{proof}

\subsection{Inhomogenization}\label{sec:inhom}

Let $\bigl( \phi_{t} \bigr)_{t>0}$ be a standard smooth mollifier on $\Rn$: $\phi_{t}(x) = t^{-n}\phi \bigl( x/t \bigr)$, where
$\phi \colon \Rn \to \R$ is a nonnegative, $\CC^\infty$ smooth and compactly supported function that integrates to $1$.
It is convenient to assume that the support of $\phi$ is the closed unit cube $\overline{\X} = [ -\tfrac{1}{2},\tfrac{1}{2}]^{n}$ and that $\phi$
is strictly positive on its interior $\X$. For later reference we put
\begin{equation}\label{mollifierbound}
M \equiv \max | \nabla \phi | .
\end{equation}
We shall also in this subsection use the $\ell^{n}_{\infty}$-metric on $\Rn$ and for
convenience of notation we denote it simply by $\| \cdot \|$, thus
$$
\| x \| \equiv \| x \|_{\ell^{n}_{\infty}} = \max \bigl\{ |x_{1}|, \, \dots \, , \, |x_{n}| \bigr\} \, \mbox{ for }
x = \bigl( x_{1}, \, \dots \, , \, x_{n} \bigr) \in \Rn .
$$
The following is the core novelty of this work:
\begin{lemma}\label{singgen}
Under the assumptions of Theorem \ref{main}, given $\varepsilon > 0$ we can find $t_{\varepsilon}>0$ and
$\varphi = \varphi_{t} \in \CC^{\infty}_{c}( \Omega , \U )$ with $\| \varphi \|_{\WW^{l-1,1}( \Omega , \U )} < \varepsilon$ so
\begin{equation}\label{saim}
\left| \int_{\Omega} \! \eta \Phi (0) \dd x + \int_{\Omega} \! \langle \Phi^{\infty},\nu^{\infty}_{x} \rangle \, \dd \lambda^{s} 
\int_{\Omega} \! \eta \Phi \bigl( \phi_{t} \ast (\overline{\nu}^{\infty}_{\cdot} \lambda^{s}) + \bala \varphi \bigr) \, \dd x \right| < \varepsilon
\end{equation}
holds for $t \in (0,t_{\varepsilon}]$ uniformly in $\eta \colon \overline{\Omega} \to \R$ and $\Phi \colon \V \to \R$ of class $\HH$ with
\begin{equation}\label{stest}
\| \eta \|_{\LIP} \leq 1 \,  \mbox{ and } \, \| T \Phi \|_{\LIP} \leq 1.
\end{equation}
\end{lemma}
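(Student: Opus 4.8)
\emph{Strategy.} The plan is a three-scale construction: a coarse scale $t$ at which one mollifies $v^{s}:=\overline{\nu}^{\infty}_{\cdot}\lambda^{s}$, a fine grid scale $\delta\ll t$ on which one superimposes, cell by cell, rescaled copies of \emph{homogeneous} generators supplied by Lemma~\ref{hb}, and the internal scale of those generators. Uniformity in the pair $(\eta,\Phi)$ subject to \eqref{stest} is obtained exactly as in Lemma~\ref{weakstarkanto} and the discussion around \eqref{kantonh}: the left-hand side of \eqref{saim} is a Kantorovich-type distance between $\epy_{\phi_{t}\ast v^{s}+\bala\varphi}$ and the fixed functional
$$\tau(\Psi)=\int_{\Omega}\Psi(x,0)\,\dd x+\int_{\Omega}\int_{\SV}\Psi^{\infty}(x,z)\,\dd\nu_{x}^{\infty}(z)\,\dd\lambda^{s}(x)\in\mathbb{E}(\Omega,\V)^{\ast},$$
so by the Arzel\`a--Ascoli theorem it suffices to control it on a finite net of admissible test pairs; this reduces every limit below to finitely many (weak-$\ast$) convergences.

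\emph{Step 1: mollification already produces the Dirac angle.} Write $v^{s}=\theta|v^{s}|$ with $\theta=\tfrac{\dd v^{s}}{\dd|v^{s}|}$. Since $\phi\ge0$, for $|v^{s}|$-a.e.\ $x$ the rescaled profiles of $\phi_{t}\ast v^{s}$ near $x$ converge to a nonnegative multiple of $\theta(x)$; hence $\phi_{t}\ast v^{s}\,\lebn\wstar v^{s}$ and $|\phi_{t}\ast v^{s}|\,\lebn\wstar|v^{s}|$, and by Reshetnyak-type continuity (valid because $\Phi\in\HH$ has a continuous recession) $\int_{\Omega}\eta\,\Phi(\phi_{t}\ast v^{s})\,\dd x\to\int_{\Omega}\eta\,\Phi(0)\,\dd x+\int_{\overline\Omega}\eta\,\Phi^{\infty}(\theta)\,\dd|v^{s}|=\int_{\Omega}\eta\,\Phi(0)\,\dd x+\int_{\Omega}\eta\,\Phi^{\infty}(\overline{\nu}^{\infty}_{x})\,\dd\lambda^{s}$, uniformly over $(\eta,\Phi)$, the last equality using positive $1$-homogeneity of $\Phi^{\infty}$ and $v^{s}=\overline{\nu}^{\infty}_{\cdot}\lambda^{s}$. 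So the mollification alone realises $\tau$ \emph{except} that the concentration angle is $\delta_{\overline{\nu}^{\infty}_{x}}$ rather than $\nu^{\infty}_{x}$; the sole job of $\bala\varphi_{t}$ is to upgrade $\Phi^{\infty}(\overline{\nu}^{\infty}_{x})$ to $\langle\Phi^{\infty},\nu^{\infty}_{x}\rangle$.

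\emph{Step 2 (the crux): the local homogeneous targets lie in $Y$.} Under the hypotheses of Theorem~\ref{main} one has $\cala v=0$, so Lemma~\ref{rankonetheorem} gives $\overline{\nu}^{\infty}_{x}\in\Lambda_{\cala}$ for $\lambda^{s}$-a.e.\ $x$. Fix such an $x$ and any $c>0$: I claim $(\delta_{0},c\,\nu^{\infty}_{x})\in Y$ with base point $z=c\,\overline{\nu}^{\infty}_{x}$. By Lemma~\ref{hb} it suffices to check the (trivial) barycentre identity and $f(0)+c\langle f^{\infty},\nu^{\infty}_{x}\rangle\ge f(c\,\overline{\nu}^{\infty}_{x})$ for all $\cala$-quasiconvex $f$ of linear growth. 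Since $c\,\overline{\nu}^{\infty}_{x}\in\Lambda_{\cala}$, Lemma~\ref{rcoflineargrowth} yields $f(c\,\overline{\nu}^{\infty}_{x})\le f(0)+f^{\infty}(c\,\overline{\nu}^{\infty}_{x})=f(0)+c\,f^{\infty}(\overline{\nu}^{\infty}_{x})$; and $f^{\infty}$ is positively $1$-homogeneous and $\Lambda_{\cala}$-convex (Lemma~\ref{qcrc}), so by Lemma~\ref{1homo} it is convex at $\overline{\nu}^{\infty}_{x}\in\Lambda_{\cala}$, whence $f^{\infty}(\overline{\nu}^{\infty}_{x})\le\langle f^{\infty},\nu^{\infty}_{x}\rangle$; combining gives the claim (when $\overline{\nu}^{\infty}_{x}=0$ this is just $0=f^{\infty}(0)\le\langle f^{\infty},\nu^{\infty}_{x}\rangle$, again from Lemma~\ref{1homo} at $0$). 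Now approximate $x\mapsto\nu^{\infty}_{x}$ by a $\lambda^{s}$-measurable simple function (Lusin) with finitely many values $\sigma_{1},\dots,\sigma_{m}$ on a measurable partition $E_{1},\dots,E_{m}$ of $\Omega$, with $\overline{\sigma}_{i}=\bar\xi_{i}$ a chosen value of $\overline{\nu}^{\infty}_{\cdot}$ in $\Lambda_{\cala}$ on $E_{i}$; then, for each $i$ and each prescribed $c_{i}>0$, Lemma~\ref{test} together with Remark~\ref{bettergenerator} and the Kantorovich reduction produce a generator $g_{i}\in\CC^{\infty}_{c}(\X,\U)$ with $\|g_{i}\|_{\WW^{l-1,1}}$ as small as we wish and $\bigl|\int_{\X}\Phi(c_{i}\bar\xi_{i}+\bala g_{i})\,\dd x-\Phi(0)-c_{i}\langle\Phi^{\infty},\sigma_{i}\rangle\bigr|<\varepsilon$ uniformly over admissible $\Phi$, with moreover $c_{i}\bar\xi_{i}+\bala g_{i}$ close to $0$ off a set of small measure. \emph{Assembly.} Fix $t$ and tile $\Omega$ by $\ell^{n}_{\infty}$-cubes $Q$ of side $\delta\ll t$; on $Q\subset E_{i}$ with centre $x_{Q}$ the mollified field is essentially the constant $\bar\xi(x_{Q})=\phi_{t}\ast v^{s}(x_{Q})$, which by Step~1 (for $t$ below a first threshold) satisfies $\bar\xi(x_{Q})=\lambda(x_{Q})\,\bar\xi_{i}+\rho(x_{Q})$ with $\lambda(x_{Q})c_{i}=|\phi_{t}\ast v^{s}(x_{Q})|/|\bar\xi_{i}|$ and $|\rho(x_{Q})|$ uniformly small. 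Put $\varphi_{t}(x)=\delta^{l}\lambda(x_{Q})\,g_{i}\bigl((x-x_{Q})/\delta\bigr)$ on $Q$, so that there $\phi_{t}\ast v^{s}+\bala\varphi_{t}\approx\rho(x_{Q})+\lambda(x_{Q})\bigl(c_{i}\bar\xi_{i}+(\bala g_{i})((x-x_{Q})/\delta)\bigr)$; exactly as in Lemma~\ref{test} this makes $\|\varphi_{t}\|_{\WW^{l-1,1}(\Omega)}=O(\delta)$, while a change of variables on each cell, the choice of $g_{i}$, and the elementary bound $|\Phi(\rho)-\Phi(0)|\lesssim|\rho|$ coming from $\|T\Phi\|_{\LIP}\le1$, give $\int_{Q}\eta\,\Phi(\phi_{t}\ast v^{s}+\bala\varphi_{t})\,\dd x\approx|Q|\,\eta(x_{Q})\bigl(\Phi(0)+\lambda(x_{Q})c_{i}\langle\Phi^{\infty},\sigma_{i}\rangle\bigr)$ up to $o_{\delta}(1)+O(\varepsilon)$. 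Summing over $Q$ and letting $\delta\to0$ the right-hand side tends to $\int_{\Omega}\eta\,\Phi(0)\,\dd x+\int_{\Omega}\eta(x)\,\tfrac{|\phi_{t}\ast v^{s}(x)|}{|\overline{\sigma}_{i(x)}|}\langle\Phi^{\infty},\sigma_{i(x)}\rangle\,\dd x$; finally, since $|\phi_{t}\ast v^{s}|\,\lebn\wstar|v^{s}|=|\overline{\nu}^{\infty}_{\cdot}|\lambda^{s}$ (Step~1), letting $t\to0$ inside the admissible range the second integral converges to $\int_{\Omega}\eta\,\langle\Phi^{\infty},\nu^{\infty}_{x}\rangle\,\dd\lambda^{s}$ up to the (controllably small) simple-function error. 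Choosing $\delta$ and then $t_{\varepsilon}$ small enough, and using the finite-net reduction so that finitely many of these convergences are uniform, yields \eqref{saim} for all $t\in(0,t_{\varepsilon}]$.

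\emph{Main obstacle.} The conceptual heart is Step~2: it is precisely the location $\overline{\nu}^{\infty}_{x}\in\Lambda_{\cala}$ (a consequence of $\cala v=0$) combined with the convexity-at-$\Lambda_{\cala}$-points of $1$-homogeneous $\Lambda_{\cala}$-convex integrands (Lemma~\ref{1homo}) that makes the relevant Jensen inequalities automatic, so that an \emph{arbitrary} angle $\nu^{\infty}_{x}$ can be installed by $\bala$-corrections. Everything else is careful bookkeeping: synchronising the three scales, keeping every estimate uniform over the Lipschitz ball of test pairs via the Kantorovich metric, controlling the misalignment vectors $\rho$, and — the one genuinely separate piece — handling the part of $\lambda^{s}$ on which $\overline{\nu}^{\infty}_{x}=0$, where no mollified base field is available and the concentration $\lambda^{s}$ must instead be reproduced by letting $\bala\varphi_{t}$ blow up (with magnitude $\sim\phi_{t}\ast(\lambda^{s}\restrict\{\overline{\nu}^{\infty}_{\cdot}=0\})$ and $\Lambda_{\cala}$-valued, balanced fine oscillations modelled on the same local generators) in a shrinking neighbourhood of an $\lebn$-null carrier of that part of $\lambda^{s}$.
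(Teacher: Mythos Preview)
Your Step~2 is exactly the conceptual core of the paper's argument, and you identify it correctly: the chain $f(r\overline{\nu}_x^{\infty})\le f(0)+rf^{\infty}(\overline{\nu}_x^{\infty})\le f(0)+r\langle f^{\infty},\nu_x^{\infty}\rangle$, using Lemmas~\ref{rankonetheorem},~\ref{rcoflineargrowth},~\ref{1homo}, places $(\delta_0,r\nu_x^{\infty})$ in $Y$ for $\lambda^s$-a.e.\ $x$ and any $r>0$. On this point your proposal and the paper agree.

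The assembly, however, has a genuine gap. You measure the local concentration mass through $|\phi_t\ast v^s|$ and then rescale a \emph{fixed} generator $g_i$ by a cube-dependent factor $\lambda(x_Q)$. Two things go wrong. First, the Kantorovich estimate $\|\epy_{c_i\bar\xi_i+\bala g_i}-(\delta_0,c_i\sigma_i)\|_{\mathrm K}<\varepsilon'$ does not transform well under the dilation $w\mapsto \lambda w$: for admissible $\Phi$ one only has $|\Phi(\lambda w)-\Phi(0)-\lambda\Phi^{\infty}(w)|=O(1)$ pointwise, so the per-cube error is $O(|Q|)$ rather than $O(\varepsilon'|Q|)$, and this sums to $O(\lebn(\bigcup\mathcal F^s))$ which you have not made small. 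Second, and more seriously, routing the mass through $|v^s|=|\overline{\nu}^{\infty}_{\cdot}|\lambda^s$ loses the part of $\lambda^s$ where $\overline{\nu}_x^{\infty}$ is zero or merely small; your separate treatment of the set $\{\overline{\nu}_x^{\infty}=0\}$ is only a sketch, and the near-zero regime (where your division by $|\bar\xi_i|$ blows up) is not addressed at all.

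The paper sidesteps both issues with two small but decisive changes. It takes the local mass to be $r_Q=\fint_Q\phi_t\ast(\lambda^s\restrict C^s)$, i.e.\ the mollification of $\lambda^s$ itself, so the case $\overline{\nu}_x^{\infty}=0$ requires no special handling. It then invokes Lemma~\ref{hb} \emph{per cube} to produce $\varphi^Q$ with $\|\epy_{r_Q\overline{\nu}_{x_Q}^{\infty}+\bala\varphi^Q}-(\delta_0,r_Q\nu_{x_Q}^{\infty})\|_{\mathrm K}<\varepsilon$ at the exact scale $r_Q$, so no rescaling is needed and the error is $\varepsilon|Q|$ on the nose. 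The remaining bookkeeping then reduces to showing $\lebn(\bigcup\mathcal F^s)\to 0$, which follows because the Luzin set $C^s$ is chosen with $\lebn(C^s)=0$ and $\bigcup\mathcal F^s\subset C^s+\overline B_{2t}(0)$. Your Step~1 (Reshetnyak for $\phi_t\ast v^s$) is correct but ultimately unnecessary once the construction is organised this way.
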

In line with what we stated at the start of this subsection we use the $\ell^{n}_{\infty}$-metric in the first bound of (\ref{stest}):
$$
\| \eta \|_{\LIP} \equiv \sup_{\Omega} | \eta | + \sup_{x, y \in \Omega \, x \neq y} \frac{| \eta (x)-\eta (y)|}{\| x-y \|}.
$$
It is elementary to check that $\|D\Phi\|_{\infty}\leq 3\|DT\Phi\|_{\infty}$. We will also assume implicit a renormalization such that $\|T\Phi\|_\mathrm{LIP}\leq 1$ implies $\|D\Phi\|_\infty\leq 1$.
\begin{proof}
Let $\varepsilon \in (0,1)$. Apply Luzin's theorem to the $\lambda^s$ measurable map
$$
\overline{\Omega} \ni x \mapsto \bigl( \delta_{0}, \nu_{x}^{\infty} \bigr) \in \mathcal{M}^{+}_{1}( \V ) \times \mathcal{M}^{+}( \SV ) \hookrightarrow \bigl( \HH^{\ast} \bigr)^{+}
$$
to find a compact subset $C^{s}=C^{s}( \varepsilon ) \subset \overline{\Omega}$ so that
$\lambda^{s} ( \overline{\Omega} \setminus C^s ) < \varepsilon \lambda^{s}( \overline{\Omega})$ and  the map
$$
C^{s} \ni x \mapsto \bigl( \delta_{0}, \nu_{x}^{\infty} \bigr) \in \mathcal{M}^{+}_{1}( \V ) \times \mathcal{M}^{+}( \SV )
$$
is (uniformly) continuous. The latter can of course be expressed quantitatively in the sense that we can find a modulus of continuity
$\omega^{s} = \omega^{s}_{\varepsilon} \colon [ 0,\infty ) \to [ 0, \infty )$ such that for all $x$, $y \in C^s$ 
the inequality
\begin{equation}\label{unibounds}
\bigl\| \bigl( \delta_{0}, \nu_{x}^{\infty} \bigr)-\bigl( \delta_{0}, \nu_{y}^{\infty} \bigr) \bigr\|_{\mathrm{K}}
\leq \omega^{s} \bigl( \|x-y\| \bigr) 
\end{equation}
holds. Because $\lambda^s$ and $\lebn$ are mutually singular we can assume that $\lebn (C^{s})=0$ and because $\lambda^{s} (\partial \Omega ) =0$
we can also assume that
\begin{equation}\label{distfromctobdry}
\Delta = \Delta_{\varepsilon} \equiv \inf \bigl\{ \| x-y \| : \, x \in C^{s} , \, y \in \partial \Omega \bigr\} \in (0,1).
\end{equation}    
Let $d$, $m \in \N$ be two integers whose precise values will be specified in the course of the proof.
Put $t = 2^{-d}$ and for convenience of notation write $\phi \equiv \phi_{t}$ so that we in particular have
\begin{equation}\label{supportphi}
\phi (x) \left\{
\begin{array}{ll}
  > 0 & \mbox{ if } \| x \| < t\\
  =0 & \mbox{ if } \| x \| \geq t.
\end{array}
\right.
\end{equation}  
Our first condition on $d \in \N$ is that it is so large that 
\begin{equation}\label{condition1}
2t \leq \Delta \mbox{ that is } d \geq \log_{2} \left( \tfrac{2}{\Delta} \right) .
\end{equation}
The collection of $(d+m)$-th generation dyadic cubes $Q \in \Dy_{d+m}$ in $\Rn$ with
$\mathrm{dist}(Q, \partial \Omega ) \geq t$ is denoted $\mathcal{F}$. For each $Q \in \mathcal{F}$ we define
\begin{equation}\label{rfunctional}
r_Q \equiv \dashint_{Q} \! \phi \ast (\lambda^{s} \restrict C^{s}) \, \dd x.
\end{equation}
In view of the choice of mollifier $\phi$ we have for cubes $Q$ with $r_{Q} > 0$ that
$$
\mathrm{dist}(Q,C^{s}) \equiv \inf \bigl\{ \| x-y \| : \, x \in Q, \, y \in C^{s} \bigr\} < t,
$$
and may therefore select $x_{Q} \in C^{s}$ so
$$
\mathrm{dist}(Q,x_{Q}) \equiv \inf \bigl\{ \| x-x_{Q} \| : \, x \in Q \bigr\} < t.
$$
Summarizing, we denote $\mathcal{F}^{s} \equiv \{ Q \in \mathcal{F}: \, r_{Q} > 0 \}$ and have that
\begin{equation}\label{xq}
\forall \, Q \in \mathcal{F}^{s} \, \exists \, x_{Q} \in C^{s} \, \mbox{ such that } \, \sup_{x \in Q} \| x-x_{Q} \| < 2t.
\end{equation}  
We continue the selection process for these cubes and fix $Q \in \mathcal{F}^s$. Then for integrands $f \in \HH$ that
are $\cala$-quasiconvex we get by Lemmas \ref{qcrc}, \ref{rcoflineargrowth}, \ref{rankonetheorem} and \ref{1homo},
\begin{eqnarray*}
f \bigl( r_{Q}\overline{\nu}_{x_{Q}}^{\infty} \bigr) &\leq& f \bigl( 0 \bigr) + r_{Q}f^{\infty} \bigl( \overline{\nu}_{x_{Q}}^{\infty} \bigr)\\
                                                     &\leq&    f \bigl( 0 \bigr) + r_{Q} \int_{\SV} \! f^{\infty} \, \dd \nu_{x_{Q}}^{\infty}.
\end{eqnarray*}       
In connection with the Hahn--Banach Lemma \ref{hb} this bound allows us to select $\varphi^{Q} \in \CC^{\infty}_{c}( Q ,\U )$ satisfying
$\| \varphi^{Q} \|_{\WW^{l-1,1}(Q, \U )} < \varepsilon \lambda^{s}(Q)$ and
\begin{equation}\label{sgeneration1}
\bigl\| \bigl( \delta_{0},\nu_{x_{Q}}^{\infty}r_{Q} \bigr) - \epy_{r_{Q}\overline{\nu}_{x_{Q}}^{\infty}+\bala \varphi^{Q}} \bigr\|_{\mathrm{K}}
< \varepsilon .
\end{equation}
We recall that this amounts to (where we realize the elementary Young measure on $Q$ instead of $\X$),
$$
\sup_{\| T\Phi \|_{\LIP} \leq 1} \left| \Phi (0)+\int_{\SV} \! \Phi^{\infty} \, \dd \nu^{\infty}_{x_{Q}} r_{Q}-\dashint_{Q} \!
\Phi \bigl( \overline{\nu}^{\infty}_{x_{Q}}r_{Q} + \bala \varphi^{Q} \bigr) \, \dd x \right| < \varepsilon .
$$
If therefore we define
$$
\varphi \equiv \sum_{Q \in \mathcal{F}^s} \varphi^{Q},
$$
then clearly
\begin{equation}\label{sgeneration2}
\varphi \in \CC^{\infty}_{c}( \Omega , \U ) , \quad \| \varphi \|_{\WW^{l-1,1}( \Omega , \U )} < \varepsilon \lambda^{s}( \Omega )
\end{equation}
and we have (\ref{sgeneration1}) for each $Q \in \mathcal{F}^s$. The sought-after map is now
\begin{equation}\label{smap}
\xi^{s} \equiv \phi \ast \bigl( \overline{\nu}^{\infty}_{\cdot}\lambda^{s} \bigr) + \bala \varphi ,
\end{equation}  
where we use the conventions that $\overline{\nu}^{\infty}_{\cdot}\lambda^{s} \equiv 0$ and $\varphi \equiv 0$ off $\Omega$ so that
$\xi^{s} \in \CC^{\infty}_{c}( \Rn , \V )$. In order to check that $\xi^s$ has the required properties, in addition to (\ref{sgeneration2}), we
fix $\eta \colon \overline{\Omega} \to \R$ and $\Phi \colon \V \to \R$ of class $\HH$ satisfying (\ref{stest}).
We start by writing
$$
\int_{\Omega} \! \eta \langle \Phi^{\infty} , \phi \ast \bigl( \nu^{\infty}_{\cdot}\lambda^{s} \bigr) \rangle \, \dd x
= \int_{\Omega} \! \eta \langle \Phi^{\infty} , \phi \ast \bigl( \nu^{\infty}_{\cdot}\lambda^{s} \restrict C^{s} \bigr) \rangle \, \dd x
+\mathcal{E}_{1},                                                                                                                          
$$
where
$$
| \mathcal{E}_{1}| \leq \int_{\Omega} \! \phi \ast \bigl( \lambda^{s} \restrict \Omega \setminus C^{s} \bigr) \, \dd x \leq \varepsilon \lambda^{s}( \Omega ).
$$
We emphasize that the above integrals should be understood as
$$
\int_{\Omega} \! \eta \langle \Phi^{\infty} , \phi \ast \bigl( \nu^{\infty}_{\cdot}\lambda^{s} \bigr) \rangle \, \dd x \equiv
\int_{\Omega} \! \eta (x) \int_{\Omega} \! \phi (x-y) \int_{\SV} \! \Phi^{\infty} \, \dd \nu^{\infty}_{y} \, \dd \lambda^{s}(y) \, \dd x,
$$
and likewise for the integral on the right-hand side, except that the $y$-integration is over the set $C^s$ instead of $\Omega$.
Since for each $Q \in \mathcal{F}$ with $r_{Q}=0$ we have
$$
\int_{Q} \! \eta \langle \Phi^{\infty} , \phi \ast \bigl( \nu^{\infty}_{\cdot}\lambda^{s} \restrict C^{s} \bigr) \rangle \, \dd x = 0,
$$
and $\mathrm{dist}( \partial \Omega , \bigcup \mathcal{F}) \leq t$ we get
\begin{eqnarray*}
\int_{\Omega} \! \eta \langle \Phi^{\infty} , \phi \ast \bigl( \nu^{\infty}_{\cdot}\lambda^{s} \restrict C^{s} \bigr) \rangle \, \dd x
&=& \sum_{Q \in \mathcal{F}^s} \! \int_{Q} \! \eta \langle \Phi^{\infty} , \phi \ast \bigl( \nu^{\infty}_{\cdot}\lambda^{s} \restrict C^{s} \bigr) \rangle \, \dd x+ \mathcal{E}_{2}\\
&=& \sum_{Q \in \mathcal{F}^s} \! \left( \int_{Q} \! \eta \, \dd x \langle \Phi^{\infty},\nu^{\infty}_{x_{Q}} \rangle r_{Q} + \mathcal{E}_{3}^{Q} \right) + \mathcal{E}_{2},
\end{eqnarray*}
where
$$
\bigl| \mathcal{E}_{2} \bigr| \leq \lambda^{s} \bigl( C^{s} \cap ( \partial \Omega )_{2t} \bigr) \, \mbox{ with }
\, ( \partial \Omega )_{2t} \equiv B_{2t}(0)+\partial \Omega ,
$$
so that $\mathcal{E}_{2}=0$ according to (\ref{distfromctobdry}) and (\ref{condition1}).
The local error terms $\mathcal{E}_{3}^Q$ are estimated as follows. First,
\begin{eqnarray*}
\bigl| \mathcal{E}_{3}^{Q} \bigr| &\leq& \left| \int_{Q} \! \bigl( \eta - \dashint_{Q} \! \eta \bigr) \langle \Phi^{\infty},
                                         \phi \ast \bigl( \nu^{\infty}_{\cdot}\lambda^{s} \restrict C^{s} \bigr)\rangle \, \dd x\right|\\
                                  && + \left| \int_{Q} \! \eta \left( \dashint_{Q} \! \langle \Phi^{\infty},\phi \ast \bigl( \nu^{\infty}_{\cdot}
                                     \lambda^{s} \restrict C^s \bigr) \rangle \, \dd x - \langle \Phi^{\infty},\nu^{\infty}_{x_{Q}} \rangle r_{Q} \right) \right|\\
                                  &\leq& \| \eta \|_{\LIP} \lebn (Q)^{\frac{1}{n}} \max_{\SV} | \Phi^{\infty}| \int_{Q} \! \phi \ast \lambda^{s} \, \dd x\\
                                  &&  + \| \eta \|_{\LIP}\int_{Q} \! \int_{C^s} \! \phi(x-y)\bigl\langle \Phi^{\infty},\nu^{\infty}_{y}
                                     -\nu^{\infty}_{x_{Q}}\bigr\rangle \, \dd \lambda^{s}(y) \, \dd x
\end{eqnarray*}
and invoking (\ref{xq}), (\ref{sgeneration1}) and (\ref{stest}) we continue with
\begin{eqnarray*}
\bigl| \mathcal{E}_{3}^{Q} \bigr|  &\leq& t \int_{Q} \!  \phi \ast \lambda^{s} \, \dd x \\
                                   && + \int_{Q} \! \int_{C^{s}} \! \phi (x-y) \omega^{s} \bigl( \| y-x_{Q} \| \bigr) \, \dd \lambda^{s}(y) \, \dd x\\
                                  &\leq& \bigl( t + \omega^{s} (3t) \bigr) \int_{Q} \! \phi \ast \lambda^{s} \, \dd x,
\end{eqnarray*}
where the last inequality follows from the triangle inequality and (\ref{xq}). Next, for each $Q \in \mathcal{F}^s$ we get from
(\ref{sgeneration1}) that
$$
\langle \Phi^{\infty},\nu^{\infty}_{x_{Q}} \rangle r_{Q} = \Phi (0) + \dashint_{Q} \! \Phi (r_{Q}\overline{\nu}^{\infty}_{x_{Q}}+\bala \varphi^{Q})
\, \dd x + \mathcal{E}^{Q}_{4},
$$
where $| \mathcal{E}_{4}^{Q}| \leq \varepsilon$. Using the triangle inequality and (\ref{sgeneration1}) again we estimate
\begin{eqnarray*}
\dashint_{Q} \! \bigl| \Phi \bigl( r_{Q}\overline{\nu}^{\infty}_{x_{Q}}+\bala \varphi^{Q} \bigr) - \Phi (0) \bigr| \, \dd x &\leq&
\| T \Phi \|_{\LIP}\dashint_{Q} \! \bigl| r_{Q}\overline{\nu}^{\infty}_{x_{Q}}+\bala \varphi^{Q} \bigr| \, \dd x\\
                                                                                                                            &\leq&
\| \epy_{r_{Q}\overline{\nu}^{\infty}_{x_{Q}}+\bala \varphi^{Q}} \|_{\mathrm{K}}\\
&\leq& \| \bigl( \delta_{0},\nu^{\infty}_{x_{Q}}r_{Q} \bigr) \|_{\mathrm{K}} + \varepsilon\\  
&\leq& 1+r_{Q}+\varepsilon
\end{eqnarray*}
and consequently,
$$
\int_{Q} \! \eta \, \dd x \dashint_{Q} \! \Phi (r_{Q}\overline{\nu}^{\infty}_{x_{Q}}+\bala \varphi^{Q}) \, \dd x =
\int_{Q} \! \eta \Phi (r_{Q}\overline{\nu}^{\infty}_{x_{Q}}+ \bala \varphi^{Q}) \, \dd x + \mathcal{E}^{Q}_{5},
$$
where (recall $\varepsilon < 1$)
\begin{eqnarray*}
| \mathcal{E}^{Q}_{5}| &\leq& \sup_{Q} \left| \eta - \dashint_{Q} \! \eta \right| \bigl( 1 + r_{Q} + \varepsilon \bigr) \lebn (Q)\\
&\leq&\lebn (Q)^{\frac{1}{n}} \int_{Q} \! \bigl( 2 + \phi \ast \lambda^{s} \bigr) \, \dd x.
\end{eqnarray*}
Finally we turn to
$$
\int_{Q} \! \eta \Phi (r_{Q}\overline{\nu}^{\infty}_{x_{Q}}+ \bala \varphi^{Q}) \, \dd x = \int_{Q} \! \eta \Phi \bigl( \phi \ast (\overline{\nu}^{\infty}_{\cdot}
\lambda^{s}) + \bala \varphi^{Q} \bigr) \, \dd x + \mathcal{E}^{Q}_{6}.
$$
To estimate the local error term $\mathcal{E}^{Q}_{6}$, $Q \in \mathcal{F}^s$, we start with the bound
$$
\left| \int_{Q} \! \eta \biggl( \Phi \bigl( \phi \ast (\overline{\nu}^{\infty}_{\cdot}
  \lambda^{s}) + \bala \varphi^{Q} \bigr)-   \Phi \bigl( \phi \ast (\overline{\nu}^{\infty}_{\cdot} \lambda^{s} \restrict C^{s}) +\bala \varphi^{Q} \bigr)
  \biggr) \, \dd x \right| \leq
\int_{Q} \phi \ast \bigl( \lambda^{s} \restrict \Omega \setminus C^{s} \bigr) \, \dd x
$$
that is an easy consequence of (\ref{stest}). Another application of (\ref{stest}) yields
$$
\left| \int_{Q} \! \eta  \biggl( \Phi \bigl( r_{Q}\overline{\nu}^{\infty}_{x_{Q}} + \bala \varphi^{Q} \bigr) -
\Phi \bigl( \phi \ast (\overline{\nu}^{\infty}_{\cdot}\lambda^{s} \restrict C^{s}) + \bala \varphi^{Q} \bigr) \biggr) \, \dd x \right|
\leq \int_{Q} \! \bigl| r_{Q}\overline{\nu}^{\infty}_{x_{Q}}-\phi \ast (\overline{\nu}^{\infty}_{\cdot}
\lambda^{s} \restrict C^{s})\bigr| \, \dd x .
$$
We estimate the last term as follows:
\begin{eqnarray*}
\int_{Q} \! \bigl| r_{Q}\overline{\nu}^{\infty}_{x_{Q}}-\phi \ast (\overline{\nu}^{\infty}_{\cdot}
  \lambda^{s} \restrict C^{s})\bigr| \, \dd x &\leq& \left| \int_{Q} \! \phi \ast \bigl( ( \overline{\nu}^{\infty}_{x_{Q}}-\overline{\nu}^{\infty}_{\cdot})
                                                     \lambda^{s} \restrict C^{s} \bigr) \, \dd x \right|\\
                                              && +\int_{Q} \left| \dashint_{Q} \! \phi \ast (\overline{\nu}^{\infty}_{\cdot}\lambda^{s} \restrict C^{s}) \, \dd x^{\prime}-
                                                 \phi \ast (\overline{\nu}^{\infty}_{\cdot}\lambda^{s} \restrict C^{s}) \right| \, \dd x\\
                                              &\leq& \int_{Q} \! \int_{C^{s}} \! \phi (x-y) \bigl| \overline{\nu}^{\infty}_{x_{Q}}-\overline{\nu}^{\infty}_{y} \bigr|
                                                     \, \dd \lambda^{s} \, \dd x\\
  && + \int_{Q} \left| \dashint_{Q} \! \phi \ast (\overline{\nu}^{\infty}_{\cdot}\lambda^{s} \restrict C^{s}) \, \dd x^{\prime}-
     \phi \ast (\overline{\nu}^{\infty}_{\cdot}\lambda^{s} \restrict C^{s}) \right| \, \dd x\\
  &\stackrel{(\ref{sgeneration1})}{\leq}& \omega^{s}( 3t ) \int_{Q} \! \phi \ast \lambda^{s} \, \dd x + \mathcal{E}^{Q}_{7}
\end{eqnarray*}
where the local error terms $\mathcal{E}^{Q}_{7}$ are the mean oscilations on $Q \in \mathcal{F}^s$ times $\lebn (Q)$. 
We write explicitly
\begin{eqnarray*}
\mathcal{E}_7^Q&\leq &\int_Q\dashint_Q \int_{C^s}\left|\phi(x^\prime -y)-\phi(x-y)\right||\overline\nu_y^\infty|\dif\lambda^s(y)\dif x^\prime\dif x\\
&\leq& \int_Q\dashint_Q\int_{Q+t\X}\|x^\prime-x\| \int_0^1|D\phi (x-y+\tau(x^\prime-x))|\dif \tau\dif\lambda^s(y)\dif x^\prime \dif x
\end{eqnarray*}

We now recall that $\phi=\phi_t$, so that we have the pointwise bounds $|D\phi|\leq t^{-1}M(\ONE_{\X})_t$, which implies for $x,\,x^\prime\in Q$, $y\in Q+t\X$, and $\tau\in[0,1]$ that
$$
|D\phi (x-y+\tau(x^\prime-x))|\leq t^{-1}M(\ONE_{2\X})_t(x-y),
$$
so that
$$
\mathcal{E}_7^Q\leq M \frac{\lebn (Q)^{\frac{1}{n}}}{t} \int_{Q} \! (\ONE_{2\X})_t \ast \lambda^{s} \, \dd x
$$
We now have all the necessary bounds and can start to backtrack through the estimates to conclude the proof of Lemma \ref{singgen}.
First we recall that we have $t=2^{-d}$ with $d \in \N$ satisfying (\ref{condition1}) and that the considered dyadic cubes are of higher
generation $Q \in \Dy_{d+m}$ (so $\lebn (Q)=2^{-n(d+m)}$). With $\xi^s$ defined in (\ref{smap}) we get by combination of the above
$$
\int_{\Omega} \! \eta \langle \Phi^{\infty},\phi \ast \bigl( \nu^{\infty}_{\cdot}\lambda^{s} \bigr) \rangle \, \dd x =
\int_{\bigcup \mathcal{F}^s} \! \eta \Phi ( \xi^{s}) \, \dd x + \mathcal{E}
$$
where
\begin{eqnarray*}
  | \mathcal{E}| &\leq& \varepsilon \lambda^{s}( \Omega ) + \bigl( t + \omega^{s}(3t) \bigr) \int_{\bigcup \mathcal{F}^s} \! \phi \ast \lambda^{s} \, \dd x\\
                 && +2^{-d-m}\int_{\bigcup \mathcal{F}^s} \! \bigl( 2+ \phi \ast \lambda^{s} \bigr) \, \dd x + \int_{\bigcup \mathcal{F}^s} \! \phi \ast \bigl(
                    \lambda^{s} \restrict \Omega \setminus C^{s} \bigr) \, \dd x\\
  && + \omega^{s}(3t)\int_{\bigcup \mathcal{F}^s} \! \phi \ast \lambda^{s} \, \dd x + M2^{-m}\int_{\bigcup \mathcal{F}^s} \! (\ONE_{2\X})_{t} \ast \lambda^{s} \, \dd x\\
                 &\leq& \bigl( 2\varepsilon +2^{-d}+2\omega^{s} \bigl(3 \cdot 2^{-d} \bigr) + 2^{-d-m}+c_{n}M2^{-m} \bigr) \lambda^{s}( \Omega )\\
  && + 2^{1-d-m}\lebn ( \Omega ).
\end{eqnarray*}
To conclude we add
$$
\int_{\Omega \setminus \bigcup \mathcal{F}^{s}} \! \eta \Phi ( \xi^{s}) \, \dd x = \int_{\Omega \setminus \bigcup \mathcal{F}^{s}} \! \eta \, \dd x \Phi (0)
$$
to both sides, whereby we get
\begin{equation}\label{sslut}
\int_{\Omega}  \! \eta \biggl( \Phi (0)+\langle \Phi^{\infty},\phi \ast \bigl( \nu^{\infty}_{\cdot}\lambda^{s} \bigr) \rangle \biggr) \, \dd x =
\int_{\Omega} \! \eta \Phi ( \xi^{s}) \, \dd x + \mathcal{E}+\int_{\bigcup \mathcal{F}^{s}} \! \eta \, \dd x \Phi (0).
\end{equation}
Here we have that $\bigcup \mathcal{F}^{s} \subset C^{s} + \overline{B}_{2t}(0)$ and since $\lebn (C^{s})=0$ we may clearly find $d_{\varepsilon}$, $m_{\varepsilon} \in \N$
depending only on $\varepsilon > 0$ so that for $d \geq d_{\varepsilon}$, $m \geq m_{\varepsilon}$ we have that
$$
| \mathcal{E}| + \left| \int_{\bigcup \mathcal{F}^{s}} \! \eta \, \dd x \Phi (0) \right| \leq 3\varepsilon \bigl( \lebn + \lambda^{s} \bigr)( \Omega ).
$$
This completes the proof since the left-hand side of (\ref{sslut}) clearly tends to
$$
\int_{\Omega} \! \eta \, \dd x \Phi (0)+ \int_{\Omega} \! \eta \langle \Phi^{\infty},\nu^{\infty}_{x } \rangle \, \dd \lambda^{s}(x)
$$
uniformly in $\eta$, $\Phi$ satisfying (\ref{stest}) as $d \to \infty$.
\end{proof}

\begin{lemma}\label{reggen}
	Under the assumptions of Theorem \ref{main}, given $\varepsilon > 0$ we can find $t_{\varepsilon}>0$ and
	$\psi = \psi_{t} \in \CC^{\infty}_{c}( \Omega , \U )$ with $\| \psi \|_{\WW^{l-1,1}( \Omega , \U )} < \varepsilon$ so
	\begin{equation}
	\left| \int_{\Omega} \! \eta \left(\langle\Phi,\nu_x\rangle  + \! \lambda^a(x)\langle \Phi^{\infty},\nu^{\infty}_{x} \rangle  \right)\dd x-
	\int_{\Omega} \! \eta \Phi \bigl(\phi_t*(\overline{\nu}_{\cdot}+\lambda^a\overline{\nu}^\infty_\cdot ) + \bala \psi \bigr) \, \dd x \right| < \varepsilon
	\end{equation}
	holds for $t \in (0,t_{\varepsilon}]$ uniformly in $\eta \colon \overline{\Omega} \to \R$ and $\Phi \colon \V \to \R$ of class $\HH$ with
	\begin{equation}
	\| \eta \|_{\LIP} \leq 1 \,  \mbox{ and } \, \| T \Phi \|_{\LIP} \leq 1.
	\end{equation}
\end{lemma}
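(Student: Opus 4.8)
The plan is to follow the proof of Lemma~\ref{singgen} almost verbatim, with $\lambda^{s}$ replaced by $\lebn$, the Luzin-continuous map $x \mapsto (\delta_{0},\nu_{x}^{\infty})$ replaced by $x \mapsto (\nu_{x},\lambda^{a}(x)\nu_{x}^{\infty})$, and the vertex $0$ of the recession cone replaced by the pointwise barycentre $z_{x} \equiv \overline{\nu}_{x}+\lambda^{a}(x)\overline{\nu}_{x}^{\infty}$. The input that makes this work is that, under the hypotheses of Theorem~\ref{main}, the Jensen inequality in its statement together with the Hahn--Banach Lemma~\ref{hb} gives, for $\lebn$-a.e.\ $x\in\Omega$, that $(\nu_{x},\lambda^{a}(x)\nu_{x}^{\infty})\in Y$ relative to the point $z_{x}$: the barycentre constraint of Lemma~\ref{hb} reads $\overline{\nu}_{x}+\lambda^{a}(x)\overline{\nu}_{x}^{\infty}=z_{x}$, which holds by the very definition of $z_{x}$; the first-moment bound $\langle\nu_{x},|\cdot|\rangle<\infty$ holds for a.e.\ $x$ by condition~(4) in the definition of a Young measure; and the required inequality $\langle f,\nu_{x}\rangle+\lambda^{a}(x)\langle f^{\infty},\nu_{x}^{\infty}\rangle\geq f(z_{x})$ for $\cala$-quasiconvex $f$ of linear growth is exactly the hypothesis. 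Hence each of these homogeneous Young measures can be generated, on any cube, by fields of the form $z_{x}+\bala u$ with $\WW^{l-1,1}$-norm as small as we wish; this is the only step that uses the fine structure of $\cala$-free measures, through Section~\ref{sec:HB}.

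First I would apply Luzin's theorem to the $\lebn$-measurable map $\Omega\ni x\mapsto(\nu_{x},\lambda^{a}(x)\nu_{x}^{\infty})\in(\HH^{\ast})^{+}$ to produce a compact set $C^{a}=C^{a}(\varepsilon)\subset\Omega$ with $\lebn(\Omega\setminus C^{a})$ so small that in addition $\int_{\Omega\setminus C^{a}}(1+|\overline{\nu}_{y}|+\lambda^{a}(y))\,\dd y<\varepsilon$ --- possible because $1+|\overline{\nu}_{\cdot}|+\lambda^{a}\in\LL^{1}(\Omega)$ --- and such that the restricted map, hence also $x\mapsto z_{x}$ and $x\mapsto\lambda^{a}(x)$, is uniformly continuous on $C^{a}$, with some modulus $\omega^{a}=\omega^{a}_{\varepsilon}$ in the Kantorovich metric. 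The one structural point that differs from Lemma~\ref{singgen}, where $\lebn(C^{s})=0$ was available, is that here $C^{a}$ is Lebesgue-large; instead I would use that, since $\mu^{a}\equiv(\overline{\nu}_{\cdot}+\lambda^{a}\overline{\nu}_{\cdot}^{\infty})\lebn\restrict\Omega$ is absolutely continuous with respect to $\lebn$ (and $\lebn(\partial\Omega)=0$), the family $\{\phi_{t}\ast\mu^{a}\}_{t>0}$ is equi-integrable, which lets us discard the contributions of the small-measure sets $\Omega\setminus C^{a}$ and a boundary layer to the mollified barycentre. Then, setting $t=2^{-d}$ so that $\phi_{t}$ is supported in $\{\,\|x\|\leq t\,\}$, let $\mathcal{F}^{a}$ be the family of dyadic cubes $Q\in\Dy_{d+m}$ with $\mathrm{dist}(Q,\partial\Omega)\geq t$ and $Q\cap C^{a}\neq\emptyset$; for each such cube pick $x_{Q}\in C^{a}\cap\overline{Q}$ and, using Lemma~\ref{hb} rescaled to $Q$ as in Lemma~\ref{test} (or Remark~\ref{bettergenerator}), select $\psi^{Q}\in\CC^{\infty}_{c}(Q,\U)$ with $\|\psi^{Q}\|_{\WW^{l-1,1}(Q,\U)}<\varepsilon\lebn(Q)$ and
\[
\Bigl|\,\dashint_{Q}\!\Phi\bigl(z_{x_{Q}}+\bala\psi^{Q}\bigr)\,\dd x-\langle\Phi,\nu_{x_{Q}}\rangle-\lambda^{a}(x_{Q})\langle\Phi^{\infty},\nu_{x_{Q}}^{\infty}\rangle\,\Bigr|<\varepsilon
\]
for all $\Phi\in\HH$ with $\|T\Phi\|_{\LIP}\leq1$. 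Putting $\psi\equiv\sum_{Q\in\mathcal{F}^{a}}\psi^{Q}\in\CC^{\infty}_{c}(\Omega,\U)$ gives $\|\psi\|_{\WW^{l-1,1}(\Omega,\U)}<\varepsilon\lebn(\Omega)$, and the candidate field is $\xi^{a}\equiv\phi_{t}\ast\mu^{a}+\bala\psi$.

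The rest is the error accounting, organized exactly as in Lemma~\ref{singgen} and using the renormalization $\|T\Phi\|_{\LIP}\leq1\Rightarrow\|D\Phi\|_{\infty}\leq1$. Writing the left-hand side as a sum over the cubes of $\mathcal{F}^{a}$ plus an integral over $\Omega\setminus\bigcup\mathcal{F}^{a}$, on each $Q\in\mathcal{F}^{a}$ one (i) freezes $x$ at $x_{Q}$ in the factors $\nu_{x}$ and $\lambda^{a}(x)\nu_{x}^{\infty}$ and replaces $\eta$ by its mean, paying errors controlled by $\omega^{a}$ on scales $\lesssim t$ and by $\|\eta\|_{\LIP}\lebn(Q)^{1/n}$; (ii) applies the generation bound above, paying $\varepsilon$ per cube; (iii) replaces the constant $z_{x_{Q}}$ by $\phi_{t}\ast\mu^{a}$ inside $\Phi$, bounding the difference by $\int_{Q}|z_{x_{Q}}-\phi_{t}\ast\mu^{a}|\,\dd x$ and splitting $\mu^{a}=\mu^{a}\restrict C^{a}+\mu^{a}\restrict(\Omega\setminus C^{a})$: the $C^{a}$-part is close to $z_{x_{Q}}$ by uniform continuity, up to a mean-oscillation term for $\phi_{t}\ast(\mu^{a}\restrict C^{a})$ that the mollifier bound $|D\phi_{t}|\leq t^{-1}M(\ONE_{\X})_{t}$ turns into a factor $M2^{-m}$, while the complementary part is bounded by $\int_{Q}\phi_{t}\ast(\lebn\restrict\Omega\setminus C^{a})\,\dd x$. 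Finally one adds $\int_{\Omega\setminus\bigcup\mathcal{F}^{a}}\eta\Phi(\xi^{a})\,\dd x$ (where $\bala\psi$ vanishes) to both sides and bounds it using equi-integrability of $\{\phi_{t}\ast\mu^{a}\}$ and the fact that $\lebn(\Omega\setminus\bigcup\mathcal{F}^{a})\to\lebn(\Omega\setminus C^{a})<\varepsilon$ as $d\to\infty$. Collecting everything, the total discrepancy is at most $C\bigl(\varepsilon+2^{-d}+\omega^{a}(C2^{-d})+2^{-m}+2^{-d-m}\bigr)(\lebn+\lambda^{a})(\Omega)$ plus a quantity that tends to $0$ with $\lebn(\Omega\setminus\bigcup\mathcal{F}^{a})$, so one first takes $d$ large and then $m$ large. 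The main obstacle is therefore not conceptual but the same multi-term bookkeeping as in Lemma~\ref{singgen}; the genuinely new ingredient is that the measure-zero argument for the Luzin complement in that lemma is replaced here by equi-integrability of the mollified barycentre $\phi_{t}\ast\mu^{a}$, which is available precisely because $\mu^{a}$ is absolutely continuous.
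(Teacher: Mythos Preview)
Your proposal is correct and follows essentially the same route as the paper: Luzin's theorem for the map $x\mapsto(\nu_x,\lambda^a(x)\nu_x^\infty)$, a dyadic grid touching the Luzin set, application of the Hahn--Banach Lemma~\ref{hb} on each cube to produce the local potentials $\psi^Q$, and then the same cascade of error terms.

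The only noteworthy differences are implementation details. You mirror Lemma~\ref{singgen} by working with two scales ($Q\in\Dy_{d+m}$ and mollifier at scale $t=2^{-d}$), bringing in the mollifier-gradient bound to control a mean-oscillation term and invoking equi-integrability of $\{\phi_t\ast\mu^a\}_{t>0}$; the paper instead uses a single scale $Q\in\Dy_d$, compares $v^a(x_Q)$ directly with $v^a\restrict C^a$ pointwise (no mean-oscillation step is needed since $z_{x_Q}$ is a point value of the Luzin-continuous density, not an average of a mollification), and only at the very end passes from $v^a$ to $\phi_t\ast v^a$ via $\LL^1$-convergence of mollifications. The paper also introduces a Tietze extension $g$ of $\lambda^a$ from $C^a$ to $\overline\Omega$ in order to bound $\sum_Q|Q\setminus C^a|\lambda^a(x_Q)$; your use of the supremum $S$ of the Kantorovich norm over $C^a$ serves the same purpose. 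Both variants work and neither is conceptually simpler---yours is slightly more uniform with Lemma~\ref{singgen}, the paper's is slightly leaner in that the second scale $m$ is dispensable here.
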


\begin{proof}
	Let $\varepsilon \in (0,1)$. Apply Luzin's theorem to the $\mathscr{L}^n$ measurable map
	\begin{eqnarray}\label{eq:map}
	{\Omega} \ni x \mapsto \bigl( \nu_{x}, \lambda^a(x)\nu_{x}^{\infty} \bigr) \in \mathcal{M}^{+}_{1}( \V ) \times \mathcal{M}^{+}( \SV ) \equiv \bigl( \HH^{\ast} \bigr)^{+}
	\end{eqnarray}
	to find a compact subset $C^{a}=C^{a}( \varepsilon ) \subset {\Omega}$ so that
	$|{\Omega} \setminus C^a| < \varepsilon |{\Omega}|$ and we can find a modulus of continuity
	$\omega^{a} = \omega^{a}_{\varepsilon} \colon [ 0,\infty ) \to [ 0, \infty )$ such that for all $x$, $y \in C^a$ 
	the inequality
	\begin{equation}\label{eq:Kantorovic_bound}
	\bigl\| \bigl( \nu_{x}, \lambda^a(x)\nu_{x}^{\infty} \bigr)-\bigl( \nu_{y}, \lambda^a(y)\nu_{y}^{\infty} \bigr) \bigr\|_{\mathrm{K}}
	\leq \omega^{a} \bigl( \|x-y\| \bigr) 
	\end{equation}
	holds. We can moreover assume by Luzin's and Tietze's theorems that we there exists a function $g\in\hold(\overline{\Omega})$ such that $g=\lambda^a$ in $C^a$ and $g$ also has modulus of continuity $\omega^a$ (in $\overline{\Omega}$). 
	 We consider a dyadic grid on $\R^n$ with step-size $t=2^{-d}$ and write 
	 $$
	 \mathcal{F}^a=\{Q\in\mathscr D_d\colon \mathrm{dist}(Q,\partial\Omega)>t,\,Q\cap C^a\neq\emptyset\},
	 $$
	where we recall that we work with the $\ell^\infty$ norm on $\R^n$. It is clear that for $d$ large enough we have that $|\bigcup\mathcal{F}^a|>(1-2\varepsilon)|\Omega|$. We let $x_Q\in Q\cap C^a$ be otherwise arbitrary. By Lemma~\ref{hb}, we obtain the existence of $\psi^{Q} \in \CC^{\infty}_{c}( Q ,\U )$ with
$\| \psi^{Q} \|_{\WW^{l-1,1}(Q, \U )} < \varepsilon |Q|$ and
\begin{equation}\label{eq:HB_AC}
\bigl\| \bigl( \nu_{x_Q},\lambda^a(x_Q)\nu_{x_{Q}}^{\infty}\bigr) - \epy_{\overline{\nu}_{x_Q}+\lambda^a(x_Q)\overline{\nu}_{x_{Q}}^{\infty}+\bala \psi^{Q}} \bigr\|_{\mathrm{K}}
< \varepsilon,
\end{equation}
	
	We then estimate:
	$$
	\int_{\Omega} \! \eta \left(\langle\Phi,\nu_x\rangle  + \! \lambda^a(x)\langle \Phi^{\infty},\nu^{\infty}_{x} \rangle  \right)\dd x=
\sum_{Q\in\mathcal{F}^a}	\int_{Q} \! \eta \left(\langle\Phi,\nu_x\rangle  + \! \lambda^a(x)\langle \Phi^{\infty},\nu^{\infty}_{x} \rangle  \right)\dd x+\mathcal{E}_1
	$$
	where
	$$
	|\mathcal{E}_1|\leq \int_{(\bigcup\mathcal{F}^a)^c}\int_{\mathbb{V}}1+|z|\dif\nu_x(z)+\lambda^a\dif x,
	$$
	which is arbitrarily small for large $d$ by the dominated convergence theorem and the moment condition. Next,
	$$
	\sum_{Q\in\mathcal{F}^a}	\int_{Q} \! \eta \left(\langle\Phi,\nu_x\rangle  + \! \lambda^a(x)\langle \Phi^{\infty},\nu^{\infty}_{x} \rangle  \right)\dd x=
	\sum_{Q\in\mathcal{F}^a}	\dashint_Q\eta\int_{Q} \!  \left(\langle\Phi,\nu_x\rangle  + \! \lambda^a(x)\langle \Phi^{\infty},\nu^{\infty}_{x} \rangle  \right)\dd x+\mathcal{E}_2,
	$$
	where
$$
	|\mathcal{E}_2|\leq \sum_{Q\in\mathcal{F}^a}	\int_{Q} \! \mathscr{L}^{n}(Q)^\frac{1}{n}\|\eta\|_{\mathrm{LIP}} \left|\langle\Phi,\nu_x\rangle  + \! \lambda^a(x)\langle \Phi^{\infty},\nu^{\infty}_{x} \rangle  \right|\dd x\leq t	\int_{\Omega} \! \int_\mathbb{V}1+|z|\dif\nu_x(z)+\lambda^a\dd x.
$$
We next look at
$$
\sum_{Q\in\mathcal{F}^a}	\dashint_Q\eta\int_{Q} \!  \left(\langle\Phi,\nu_x\rangle  + \! \lambda^a(x)\langle \Phi^{\infty},\nu^{\infty}_{x} \rangle  \right)\dd x=
\sum_{Q\in\mathcal{F}^a}	 \left(\langle\Phi,\nu_{x_Q}\rangle  + \! \lambda^a(x_Q)\langle \Phi^{\infty},\nu^{\infty}_{x_Q} \rangle  \right)\int_Q\eta+\mathcal{E}_3,
$$
where
\begin{eqnarray*}
|\mathcal{E}_3|&\leq& \sum_{Q\in\mathcal{F}^a}	\left|\left(\int_{Q\cap C^a}+\int_{Q\setminus C^a}\right) \!  \langle\Phi,\nu_x\rangle  + \! \lambda^a(x)\langle \Phi^{\infty},\nu^{\infty}_{x} \rangle-\langle\Phi,\nu_{x_Q}\rangle  - \! \lambda^a(x_Q)\langle \Phi^{\infty},\nu^{\infty}_{x_Q} \rangle  \dd x\right|\\
&\leq& \omega^a(t)|\Omega|+\int_{\Omega\setminus C^a} \int_\mathbb{V}1+|z|\dif\nu_x(z)+\lambda^a\dd x+\sum_{Q\in\mathcal{F}^a}|Q\setminus C^a|\lambda^a(x_Q).
\end{eqnarray*}
Here the first two terms are known to be small, as above, whereas for the third one we argue as follows:
\begin{eqnarray*}
\sum_{Q\in\mathcal{F}^a}|Q\setminus C^a|\lambda^a(x_Q)&=&\sum_{Q\in\mathcal{F}^a}|Q\setminus C^a|g(x_Q)= \sum_{Q\in\mathcal{F}^a}\int_{Q\cap C^a}\left[g(x_Q)-g(x)\right]+g(x)\dif x\\
&\leq&\omega^a(t)|\Omega\setminus C^a|+\int_{\Omega\setminus C^a}|g|\dif x,
\end{eqnarray*}
which can be made small. By \eqref{eq:HB_AC} we have that
$$
\langle\Phi,\nu_{x_Q}\rangle  + \! \lambda^a(x_Q)\langle \Phi^{\infty},\nu^{\infty}_{x_Q} \rangle =\dashint_{Q}\Phi\left(\overline{\nu}_{x_{Q}}+\lambda^a(x_Q)\overline{\nu}^{\infty}_{x_{Q}}+\bala \psi^Q\right)\dif x+\mathcal{E}_4^Q,
$$
where $|\mathcal{E}_4^Q|\leq \varepsilon$, so that
$$
\sum_{Q\in\mathcal{F}^a}	 \left(\langle\Phi,\nu_{x_Q}\rangle  + \! \lambda^a(x_Q)\langle \Phi^{\infty},\nu^{\infty}_{x_Q} \rangle  \right)\int_Q\eta
=
\sum_{Q\in\mathcal{F}^a}\int_{Q}\eta\Phi\left(\overline{\nu}_{x_{Q}}+\lambda^a(x_Q)\overline{\nu}^{\infty}_{x_{Q}}+\bala \psi^Q\right)\dif x+\mathcal{E}_5,
$$
where now
\begin{eqnarray*}
|\mathcal{E}_5|&\leq& \sum_{Q\in\mathcal{F}^a}\int_Q \left|\eta-\dashint_Q\eta\right|\left|\Phi\left(\overline{\nu}_{x_{Q}}+\lambda^a(x_Q)\overline{\nu}^{\infty}_{x_{Q}}+\bala \psi^Q\right)\right|\dif x\\
&\leq &\sum_{Q\in\mathcal{F}^a}t\left(\bigl\| \bigl( \nu_{x_Q},\lambda^a(x_Q)\nu_{x_{Q}}^{\infty} \bigr)  \bigr\|_{\mathrm{K}}|Q|
+ \varepsilon\right)\leq t(\varepsilon+|\Omega|S),
\end{eqnarray*}
where $S$ is the supremum over the compact set $C^a$ of the map in \eqref{eq:map}, which is obviously continuous there. We abbreviate $v^a=\overline{\nu}_\cdot+\lambda^a\overline{\nu}^\infty_\cdot=\langle \mathrm{id},\nu_\cdot\rangle+\lambda^a\langle\mathrm{id},\nu_\cdot^\infty\rangle$, the absolutely continuous part of the barycentre of $\nu$. We have
$$
\sum_{Q\in\mathcal{F}^a}\int_{Q} \eta\Phi\left(v^a(x_Q)+\bala\psi^Q\right)\dif x=\sum_{Q\in\mathcal{F}^a}\int_{Q}\eta\Phi\left(v^a\restrict C^a+\bala\psi^Q\right)\dif x+\mathcal{E}_6^Q,
$$
where
\begin{eqnarray*}
|\mathcal{E}_6^Q|&\leq &\int_{Q}|v^a(x_Q)-v^a\restrict C^a|\dif x\leq|Q\setminus C^a|S+\omega^a(t)|Q\cap C^a|
\end{eqnarray*}
where in the last inequality we used \eqref{eq:Kantorovic_bound} for the integrands $\Phi_i(z)=z_i$ (components of the identity).
In particular, $\sum_{\mathcal{F}^a}|\mathcal{E}_6^Q|\leq |\Omega\setminus C^a|S+ \omega^a(t)|\Omega|$.

We finally estimate
$$
\sum_{Q\in\mathcal{F}^a}\int_{Q}\eta\Phi\left(v^a\restrict C^a+\bala\psi^Q\right)\dif x=\int_{\Omega} \eta\Phi\left(v^a+\bala\psi\right)\dif x+\mathcal{E}_7,
$$
where $\psi=\sum_{\mathcal{F}^a}\psi^Q$, so that $$|\mathcal{E}_7|\leq \int_{\Omega\setminus\bigcup\mathcal{F}^a}|v^a|\dif x,$$
which can be made small since $v^a$ is integrable, so that moreover $\phi_t*v^a\rightarrow v^a$ in $\lebe^1(\Omega,\mathbb{V})$.
Collecting the above estimates we obtain 
$$
\left|\int_{\Omega}\eta\left(\langle \Phi,\nu_\cdot\rangle+\lambda^a \langle \Phi^\infty,\nu^\infty_\cdot\rangle\right)\dif x
  -\int_{\Omega}\eta\Phi(\phi_t*v^a+\bala \psi_\varepsilon)\dif x\right|<\delta(t,\varepsilon,\nu),
$$
which can be made arbitrarily small as $t,\,\varepsilon\downarrow 0$. The proof is complete.
\end{proof}
The proof of Theorem~\ref{main} then follows by Lemmas~\ref{singgen},~\ref{reggen},~\ref{lem:identify} and the elementary Lemma~\ref{lem:genseq} below.
\begin{lemma}\label{lem:genseq}
Suppose that the sequences $(v_j) ,\,(\tilde{v}_j) \subset\hold^\infty(\overline\Omega,\V)$ generate the Young measures
$\nu_1=((\nu_x)_{x\in\Omega},\lambda_1,(\nu_x^\infty)_{x\in\Omega}),\,\nu_2=((\delta_0)_{x\in\Omega},\lambda_2,(\nu_x^\infty)_{x\in\overline\Omega})$ respectively.
Suppose also that $\lambda_1\ll \lebn\perp\lambda_2$. Then $(v_j+\tilde v_j)$ generates $\nu =((\nu_x)_{x\in\Omega},\lambda_1+\lambda_2,(\nu_x^\infty)_{x\in\overline\Omega})$.
\end{lemma}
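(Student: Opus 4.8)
My plan is to verify directly that $(v_j+\tilde v_j)$ generates $\nu$, i.e.\ that $\epy_{v_j+\tilde v_j}\wstar\nu$ in $\mathbb{E}(\Omega,\V)^*$. First I would reduce to a convenient class of test integrands. Since $(v_j)$ and $(\tilde v_j)$ are generating sequences, testing against the integrand $(x,z)\mapsto|z|$ shows that $\int_\Omega|v_j|\,\dd x$ and $\int_\Omega|\tilde v_j|\,\dd x$ converge, hence are bounded, so $\sup_j\|\epy_{v_j+\tilde v_j}\|_{\mathbb{E}^*}=\sup_j\int_\Omega(1+|v_j+\tilde v_j|)\,\dd x<\infty$. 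By Lemma~\ref{lem:identify} and the density (via Stone--Weierstrass) of the span of integrands $\eta\otimes\Phi$ in $\mathbb{E}(\Omega,\V)$, it then suffices to prove $\langle\epy_{v_j+\tilde v_j},\eta\otimes\Phi\rangle\to\langle\nu,\eta\otimes\Phi\rangle$ for each $\eta\in\LIP(\overline\Omega)$ with $\|\eta\|_{\LIP}\leq1$ and each $\Phi\in\HH$ with $\|T\Phi\|_{\LIP}\leq1$; by the normalization convention recorded before the proof of Lemma~\ref{singgen} the latter forces $\|D\Phi\|_\infty\leq1$, so $\Phi$ is $1$-Lipschitz.

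The key step is a pointwise algebraic splitting: I would write $\Phi(v_j+\tilde v_j)=R_j+\Phi(v_j)+\Phi(\tilde v_j)-\Phi(0)$, where $R_j:=\Phi(v_j+\tilde v_j)-\Phi(v_j)-\Phi(\tilde v_j)+\Phi(0)$, test against $\eta$, and let $j\to\infty$, using that $(v_j),(\tilde v_j)$ generate $\nu_1,\nu_2$:
$$
\langle\epy_{v_j+\tilde v_j},\eta\otimes\Phi\rangle-\int_\Omega\eta R_j\,\dd x\;\longrightarrow\;\langle\nu_1,\eta\otimes\Phi\rangle+\langle\nu_2,\eta\otimes\Phi\rangle-\int_\Omega\eta\,\Phi(0)\,\dd x .
$$
Since $\lambda_1\ll\lebn$ and $\lebn(\partial\Omega)=0$, we have $\lambda_1=\lambda_1^a\lebn\restrict\Omega$ with no mass on $\partial\Omega$, and unwinding the definitions shows the right-hand side equals $\langle\nu,\eta\otimes\Phi\rangle$ (the oscillation part and the $\lebn$-absolutely continuous concentration of $\nu$ come from $\nu_1$, the singular concentration from $\nu_2$, and subtracting $\int_\Omega\eta\,\Phi(0)\,\dd x$ cancels the trivial oscillation measure $\delta_0$ of $\nu_2$). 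So everything reduces to $\int_\Omega\eta R_j\,\dd x\to0$, and since $|\eta|\leq1$ it suffices that $\int_\Omega|R_j|\,\dd x\to0$. Exploiting that $\Phi$ is $1$-Lipschitz, so $|\Phi(a+b)-\Phi(b)|\leq|a|$, I would write $R_j=[\Phi(v_j+\tilde v_j)-\Phi(\tilde v_j)]-[\Phi(v_j)-\Phi(0)]$ to get $|R_j|\leq2|v_j|$, and $R_j=[\Phi(v_j+\tilde v_j)-\Phi(v_j)]-[\Phi(\tilde v_j)-\Phi(0)]$ to get $|R_j|\leq2|\tilde v_j|$, whence $|R_j|\leq2\min(|v_j|,|\tilde v_j|)$ pointwise on $\Omega$.

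The only step with genuine content is thus $\int_\Omega\min(|v_j|,|\tilde v_j|)\,\dd x\to0$, and this is where I would use the hypothesis $\lambda_1\ll\lebn\perp\lambda_2$. Set $\sigma_j:=\min(|v_j|,|\tilde v_j|)\,\lebn\restrict\Omega\in\mathcal{M}^{+}(\overline\Omega)$, a bounded sequence. Testing $\epy_{v_j}$ and $\epy_{\tilde v_j}$ against integrands $(x,z)\mapsto\zeta(x)|z|$ with $\zeta\in\hold(\overline\Omega)$, and using $\lambda_1=\lambda_1^a\lebn\restrict\Omega$ and that the oscillation measure of $\nu_2$ is $\delta_0$, I would show $|v_j|\lebn\restrict\Omega\wstar h\,\lebn\restrict\Omega$ in $\mathcal{M}(\overline\Omega)$, with $h(x)=\int_\V|z|\,\dd\nu_x(z)+\lambda_1^a(x)\in\LL^{1}(\Omega)$, and $|\tilde v_j|\lebn\restrict\Omega\wstar\lambda_2$ in $\mathcal{M}(\overline\Omega)$. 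Then for nonnegative $\zeta\in\hold(\overline\Omega)$ and any weak-$\ast$ cluster point $\sigma$ of $(\sigma_j)$, passing to the limit in $\int_\Omega\zeta\min(|v_j|,|\tilde v_j|)\,\dd x\leq\int_\Omega\zeta|v_j|\,\dd x$ and in $\int_\Omega\zeta\min(|v_j|,|\tilde v_j|)\,\dd x\leq\int_\Omega\zeta|\tilde v_j|\,\dd x$ gives $\sigma\leq h\,\lebn\restrict\Omega$ and $\sigma\leq\lambda_2$; since $\lambda_2\perp\lebn$, decomposing $\overline\Omega=S\cup(\overline\Omega\setminus S)$ with $\lebn(S)=0$ and $\lambda_2(\overline\Omega\setminus S)=0$ forces $\sigma\equiv0$. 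Hence $0$ is the only weak-$\ast$ cluster point of the bounded positive sequence $(\sigma_j)$, so $\sigma_j\wstar0$ and in particular $\int_\Omega\min(|v_j|,|\tilde v_j|)\,\dd x=\sigma_j(\overline\Omega)\to0$. The main (and essentially only) obstacle is to spot the $\min$-bound for $R_j$ and then to observe that the mass of $\tilde v_j$ piles up on the $\lebn$-null carrier of $\lambda_2$, on which the diffuse (absolutely continuous) limit of $|v_j|$ puts no mass; everything else is routine bookkeeping with the weak-$\ast$ topology.
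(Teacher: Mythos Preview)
Your proof is correct and follows essentially the same approach as the paper: both use the algebraic splitting $R_j=\Phi(v_j+\tilde v_j)-\Phi(v_j)-\Phi(\tilde v_j)+\Phi(0)$, the Lipschitz bound on $\Phi$, and the identifications $|v_j|\lebn\wstar h\lebn$, $|\tilde v_j|\lebn\wstar\lambda_2$ to force the error to vanish via $\lambda_2\perp\lebn$. The only difference is cosmetic: the paper chooses an open set $O_\varepsilon$ approximately carrying $\lambda_2$ and estimates $\int|R_j|$ separately over $O_\varepsilon$ and its complement, whereas you encapsulate the same idea more cleanly through the pointwise bound $|R_j|\leq 2\min(|v_j|,|\tilde v_j|)$ and a weak-$\ast$ compactness argument.
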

\begin{proof}
By testing $\nu_1$ with functions $\eta\otimes |\,\cdot\,|$ for $\eta\in\hold(\overline{\Omega})$, we see that
$$
|v_j|\wstar \int_\V|z|\dif\nu_\cdot(z)\lebn\restrict\Omega+\lambda_1\equiv \lambda_0\ll\lebn\quad
\text{and}\quad |\tilde v_j|\wstar \lambda_2\quad\text{in }\mathcal{M}^+(\overline\Omega).
$$
where the convergence takes place in $\mathcal{M}^+(\overline{\Omega})$. 

Let $\varepsilon\in(0,1)$. Since $\lambda_2\perp \lambda_0\ll\lebn$, we can choose an open set $O_\varepsilon$ such that
$$
\lambda_0(O_\varepsilon)<\varepsilon\lambda_0(\Omega) \quad\text{and}\quad \lambda_2(\overline{\Omega}\setminus O_\varepsilon)<\varepsilon\lambda_2(\Omega).
$$
Then for sufficiently large $j\geq j(\varepsilon)$ we have by \cite[Prop.~1.203]{FL} that 
$$
\int_{O_\varepsilon}|v_j|\leq 2\varepsilon\lambda_0(\Omega)\quad\text{ and }\quad\int_{\Omega\setminus O_\varepsilon}|\tilde v_j|<2\varepsilon \lambda_2(\Omega).
$$
In view of Lemma~\ref{lem:identify}, let $\eta\in\hold(\overline\Omega)$, $\Phi\in\LIP(\V)$ be such that $\|\eta\|_\infty\leq 1$ and $\| \nabla \Phi\|_\infty\leq 1$.
Then, for $j\geq j_\varepsilon$, write
\begin{eqnarray*}
\mathcal{E}\equiv \left|\int_{\Omega}\eta \left(\Phi(v_j+\tilde v_j)-\Phi(v_j)-\Phi(\tilde v_j)+\Phi(0)\right)\dif x\right|,
\end{eqnarray*}
which can be estimated by
\begin{eqnarray*}
\mathcal{E}&\leq& \|\eta\|_{\infty}\left(\displaystyle\int_{O_\varepsilon}|\Phi(v_j+\tilde v_j)-\Phi(\tilde v_j)|+|\Phi(v_j)-\Phi(0)|\dif x\right.\\
&&+\left.\displaystyle\int_{O_\varepsilon}|\Phi(v_j+\tilde v_j)-\Phi( v_j)|+|\Phi(\tilde v_j)-\Phi(0)|\dif x\right)\\
&\leq& 2\left(\displaystyle{\int_{O_\varepsilon}|v_j|\dif x+\int_{\Omega\setminus\varepsilon}|\tilde v_j|\dif x}\right)\leq 4\varepsilon(\lambda_0+\lambda_2)(\Omega).
\end{eqnarray*}
It follows that
\begin{eqnarray*}
\lim_{j\rightarrow\infty}\int_{\Omega}\eta \Phi(v_j+\tilde v_j)\dif x&=&\lim_{j\rightarrow\infty}\int_{\Omega}\eta\Phi(v_j)\dif x+\lim_{j\rightarrow\infty}\int_{\Omega}\eta\Phi(\tilde v_j)\dif x-\int_{\Omega}\eta\Phi(0)\dif x\\
&=&\int_{\Omega}\langle \Phi,\nu_x \rangle\dif x+\int_{\overline{\Omega}}\eta \langle \Phi^\infty \nu^\infty_x\rangle\dif (\lambda_1+\lambda_2),
\end{eqnarray*}
which concludes the proof.
\end{proof}

\section{On the angles of diffuse concentration}\label{sec:diff_conc}

As is transparent from the proof in Section~\ref{sec:inhom}, the analysis of an $\cala$-free Young measure $\nu$ is naturally split between a regular part $((\nu_x), \lambda^a\lebn,(\nu_x^\infty))$ and a singular part $((\delta_0),\lambda^s,(\nu_x^\infty))$. In this final section we will make some remarks on how the concentration (angle) measure behaves in each part.

At points of \emph{singular concentration}, by which we mean $\lambda^s$-almost everywhere, it was recently proved that the parametrized measure $(\nu_x^\infty)$ is unconstrained, in the following sense: Assuming only that the barycentre $v$ of $\nu$ is $\cala$-free, the main results of \cite{DPR,KirKri} ensure that that the Jensen-type inequality for $\cala$-quasiconvex $f$ of linear growth
\begin{align}\label{eq:jensen_sec4}
f^\infty(\overline \nu_x^\infty)\leq\int_{\mathbb{S_V}} f^\infty\dif\nu_x^\infty,
\end{align}
{holds $\lambda^s$ a.e. }$x$ due to the fact that the density $\overline\nu_\cdot^\infty$ of $v^s$ lies in $\Lambda_{\cala}$  $\lambda^s$ almost everywhere.  However, it may well happen that at points of \emph{diffuse concentration}, i.e., on a $\lambda^a\lebn$ non-negligible set, we have that $\overline\nu_x^\infty\notin\Lambda_\cala$.
In particular, at such points \eqref{eq:jensen_sec4} cannot follow from the automatic convexity result in \cite{KirKri} and, indeed,  can fail in general \cite{Mu}.

For the remainder of this section, we propose a replacement for the inequality \eqref{eq:jensen_sec4} at points of diffuse concentration. We show that this alternative inequality does indeed characterize the diffuse concentration angle measures under the technical assumption that the oscillation measure has $\cala$-free structure. We hope that this observation will be the precursor of future developments in understanding concentration effects for $\cala$-free Young measures.

By standard localization principles, e.g., Proposition~\ref{local}, it suffices to consider homogeneous Young measures. Recall the notation of Section~\ref{sec:HB} and the result of the Hahn--Banach Lemma~\ref{hb}. If $\nu=(\nu^0,t\nu^\infty) \in {Y}$ where $\nu^0\in\mathcal{M}^+_1(\mathbb V)$ and $\nu^\infty\in\mathcal{M}^+_1(\mathbb{S_V})$, then we have in particular for $\cala$-quasiconvex $f \in \HH$ that
\begin{equation}\label{testing}
f( \overline{\nu}^{0}+\overline{\nu}^{\infty}t ) \leq \int_{\V} \! f \, \dd \nu^{0} + t\int_{\SV} \! f^{\infty} \, \dd \nu^{\infty}.
\end{equation}
Now let $F \in \HH$ be $\cala$-quasiconvex and differentiable at $z_{0} \in \V$. Put for $\varepsilon > 0$
$$
f(z) = \frac{F( \varepsilon z + z_{0})-F(z_{0})}{\varepsilon}, \quad  z \in \V .
$$
Then $f \in \HH$ is $\cala$-quasiconvex and $f^{\infty}= F^{\infty}$, hence from (\ref{testing}) we get after passing to the limit $\varepsilon \searrow 0$:
$$
F^{\prime}(z_{0}) \cdot ( \overline{\nu}^{0}+\overline{\nu}^{\infty}t ) \leq \int_{\V} \! F^{\prime}(z_{0}) \cdot z \, \dd \nu^{0} + t\int_{\SV} \! F^{\infty} \, \dd \nu^{\infty}
$$
and so cancelling terms and assuming that $t>0$ we arrive at
$$
F^{\prime}(z_{0}) \cdot \overline{\nu}^{\infty} \leq \int_{\SV} \! F^{\infty} \, \dd \nu^{\infty} .
$$
If we let $D = \{ F^{\prime}( z_{0}) : \, F \mbox{ is differentiable at } z_{0} \}$, then $D$ is a bounded subset of $\V$ with $\sup_{\zeta \in D} | \zeta | = \lip (F)$.
If $G(z) = \sup_{\zeta \in D} \zeta \cdot z$ is the support function for $D$, then we have shown that the concentration angle measure $\nu^{\infty}$ must satisfy
\begin{equation}\label{concentration}
G( \overline{\nu}^{\infty}) \leq \int_{\SV} \! F^{\infty} \, \dd \nu^{\infty}.
\end{equation}
The closed convex hull $\overline{\mathrm{co}}D = \mathrm{co}\overline{D}$ is identical with the Clarke subdifferential of $F$. It is easy to check that
$G(z) = \sup_{\zeta \in \mathrm{co}\overline{D}} \zeta \cdot z$. We also record:
\begin{lemma}
Let $F \colon \V \to \R$ be of linear growth and $\cala$-quasiconvex. Then $F^{\infty} (z) \leq G(z)$ for all $z \in \V$ and equality holds at all $z \in \Lambda_{\cala}$.
Furthermore, when $F \in \HH$ is convex, then $G=F^{\infty}$ everywhere.
\end{lemma}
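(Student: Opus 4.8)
Since the wave cone of $\cala$ is spanning, Lemma~\ref{rcoflineargrowth} shows that $F$ is Lipschitz; hence by Rademacher's theorem $F$ is differentiable $\lebn$-a.e., the set $D$ is bounded (indeed $\sup_{\zeta\in D}|\zeta|=\lip F$, as recorded above), and $G$ is a finite, positively $1$-homogeneous, convex and $\lip(F)$-Lipschitz function. We shall also use the simpler formula $F^{\infty}(z)=\limsup_{t\to\infty}F(tz)/t$ valid in the present setting, together with the consequence (from Lipschitz continuity) that $F^{\infty}(z)=\limsup_{t\to\infty}F(w+tz)/t$ for every $w\in\V$.

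\textbf{The bound $F^{\infty}\le G$.} The plan is first to prove the elementary mean-value inequality $F(b)-F(a)\le G(b-a)$ for all $a,b\in\V$. One way is Lebourg's mean value theorem, noting that the Clarke subdifferential $\partial_{C}F(c)$ is contained in $\overline{\mathrm{co}}\,D$ for every $c$ and that $G$ is precisely its support function. To keep the argument self-contained I would instead use a Fubini argument along rays: for $\sigma$-a.e. $\omega\in\mathbb S^{n-1}$ the set $\{r>0:\,r\omega\notin\text{\{diff.\ points\}}\}$ is $\lebn^{1}$-null, the Lipschitz map $r\mapsto F(r\omega)$ is absolutely continuous, and its derivative $\nabla F(r\omega)\cdot\omega\le G(\omega)$ for a.e.\ $r$, whence $F(t\omega)-F(0)\le tG(\omega)$; by continuity of both sides this extends to all $\omega$, and then by $1$-homogeneity of $G$ to all directions, so $F(tz')-F(0)\le tG(z')$. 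Dividing by $t$ and letting $t\to\infty$, $z'\to z$ (the perturbation $z'\to z$ being harmless since $G$ is continuous) yields $F^{\infty}(z)\le G(z)$ for all $z\in\V$.

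\textbf{Equality on $\Lambda_{\cala}$, and the convex case.} By Lemma~\ref{qcrc}, $F$ is $\Lambda_{\cala}$-convex, so for $z\in\Lambda_{\cala}$ and any differentiability point $x_{0}$ of $F$ the convex function $t\mapsto F(x_{0}+tz)$ lies above its supporting line at $t=0$, i.e.\ $F(x_{0}+tz)\ge F(x_{0})+t\,\nabla F(x_{0})\cdot z$. Dividing by $t$ and letting $t\to\infty$ gives $F^{\infty}(z)\ge\nabla F(x_{0})\cdot z$; taking the supremum over all such $x_{0}$ (and recalling $G(z)=\sup_{\zeta\in D}\zeta\cdot z$) gives $F^{\infty}(z)\ge G(z)$, which together with the previous paragraph yields equality for $z\in\Lambda_{\cala}$. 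If moreover $F$ is convex on all of $\V$, then the supporting-hyperplane inequality $F(x_{0}+tz)\ge F(x_{0})+t\,\nabla F(x_{0})\cdot z$ holds for \emph{every} $z\in\V$, so the identical computation gives $F^{\infty}\ge G$ everywhere, hence $G=F^{\infty}$.

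\textbf{Main obstacle.} The only non-routine ingredient is the mean-value inequality $F(b)-F(a)\le G(b-a)$ for Lipschitz $F$; everything else is a one-line convexity estimate. Either the Clarke-calculus route (quick to state, invoking Lebourg's theorem and $\partial_{C}F\subseteq\overline{\mathrm{co}}\,D$) or the Fubini-along-rays route (slightly longer but elementary) settles it, and I would record the latter so as not to import Clarke calculus.
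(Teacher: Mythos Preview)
Your proof is correct and follows essentially the same route as the paper's (very terse) argument: the paper invokes ``the fundamental theorem of calculus'' for $F^{\infty}\le G$, which is exactly your Fubini-along-rays integration of $\nabla F$, and ``the 3-slope inequality'' for the reverse bounds on $\Lambda_{\cala}$ and in the convex case, which is equivalent to your supporting-line estimate $F(x_0+tz)\ge F(x_0)+t\,\nabla F(x_0)\cdot z$. Your write-up simply makes explicit the Rademacher/Fubini justification that the paper leaves implicit.
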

\begin{proof}
The inequality $F^{\infty} \leq G$ follows by the fundamental theorem and calculus. The other statements follow from the $3$-slope inequality.
\end{proof}
In particular, we retrieve \eqref{eq:jensen_sec4} at points of singular concentration without directly appealing to the result in \cite{KirKri}. At diffuse concentration points we obtain a stronger inequality than \eqref{eq:jensen_sec4}.

It would be interesting to see to what extent can the strengthened Jensen inequality \eqref{concentration} characterize (homogeneous) diffuse concentration angle measures of $\cala$-free Young measures. We will show that, under certain restrictions on the oscillation, we have a characterization.

It is not difficult to see that $(\delta_{z} , \nu^{\infty}t) \in {Y}$ for all $z \in \V$, $t>0$ if and only if $\nu^{\infty} \in \mathcal{M}^{+}_{1}( \SV )$ satisfies (\ref{concentration}) for all $\cala$-quasiconvex $F \in \HH$. In fact, we can say slightly more:

\begin{proposition}
    Let $\nu=(\nu^0,t\nu^\infty)\in\HH^*$ be  such that $F(\overline{\nu}^0)\leq \langle F,\nu^0\rangle$ for all $\cala$-quasiconvex $F \in \HH$. Then $\nu\in Y$ if and only if \eqref{concentration} holds.
\end{proposition}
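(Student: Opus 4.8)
The plan is to reduce the whole statement to the Hahn--Banach characterization of $Y$ in Lemma~\ref{hb}, the one extra ingredient being a translation trick that decouples the oscillation variable from the barycentre of the concentration part. The \emph{only if} implication then requires nothing new: it is exactly the computation performed immediately before the statement. Namely, if $\nu=(\nu^0,t\nu^\infty)\in Y$ with $t>0$, Lemma~\ref{hb} gives $\int_\V f\,\dd\nu^0+t\int_{\SV}f^\infty\,\dd\nu^\infty\ge f(\overline\nu^0+t\overline\nu^\infty)$ for every $\cala$-quasiconvex $f$ of linear growth; applying this to the difference quotients $z\mapsto\varepsilon^{-1}(F(z_0+\varepsilon z)-F(z_0))$ of a $\cala$-quasiconvex $F\in\HH$ at a point $z_0$ of differentiability, letting $\varepsilon\searrow 0$, cancelling and dividing by $t$, then taking the supremum over such $z_0$, produces \eqref{concentration}. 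This uses nothing about $\nu^0$ beyond its barycentre, so the hypothesis on $\nu^0$ is only needed for the converse.

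For the \emph{if} implication I would proceed as follows. Using the hypothesis $F(\overline\nu^0)\le\langle F,\nu^0\rangle$ for $\cala$-quasiconvex $F\in\HH$ together with the monotone approximation Lemma~\ref{approximationlemma} (exactly as in the proof of Lemma~\ref{hb}), upgrade first to $f(\overline\nu^0)\le\int_\V f\,\dd\nu^0$ for every $\cala$-quasiconvex $f$ of linear growth. By Lemma~\ref{hb} it then suffices to establish $\int_\V f\,\dd\nu^0+t\int_{\SV}f^\infty\,\dd\nu^\infty\ge f(\overline\nu^0+t\overline\nu^\infty)$ for each $\cala$-quasiconvex $f$ of linear growth, and by a further appeal to Lemma~\ref{approximationlemma} (passing to the monotone limit in the resulting numerical inequality) we may take $f\in\HH$, hence Lipschitz since the wave cone is spanning. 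The decisive step is now to apply the oscillation inequality not to $f$ itself but to the \emph{translated} integrand $g(z)\equiv f(z+t\overline\nu^\infty)$, which is again $\cala$-quasiconvex of linear growth; this yields
$$
f(\overline\nu^0+t\overline\nu^\infty)=g(\overline\nu^0)\le\int_\V g\,\dd\nu^0=\int_\V f(z+t\overline\nu^\infty)\,\dd\nu^0(z),
$$
so the barycentre $t\overline\nu^\infty$ of the concentration part has been absorbed into the oscillation variable.

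Finally one pays for the shift: with $G=G_f$ the support function appearing in \eqref{concentration}, the fundamental theorem of calculus along the segment $[z,z+t\overline\nu^\infty]$ together with the fact that the values of $f'$ lie in the generating set of $G$ gives the pointwise bound $f(z+t\overline\nu^\infty)-f(z)\le G(t\overline\nu^\infty)=t\,G(\overline\nu^\infty)$ for all $z\in\V$ (immediate for $\lebn$-a.e.\ $z$ by Fubini, then for all $z$ by continuity). Integrating against $\nu^0$ and invoking \eqref{concentration} with $F=f$ gives
$$
\int_\V f(z+t\overline\nu^\infty)\,\dd\nu^0(z)\le\int_\V f\,\dd\nu^0+t\,G(\overline\nu^\infty)\le\int_\V f\,\dd\nu^0+t\int_{\SV}f^\infty\,\dd\nu^\infty,
$$
and chaining this with the previous display yields the required Jensen inequality, whence $\nu\in Y$ (for the reference point $z=\overline\nu^0+t\overline\nu^\infty$). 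I expect the translation trick to be the only genuine obstacle: recognising that the decoupling hypothesis on $\nu^0$ is precisely what permits $t\overline\nu^\infty$ to be pushed inside is the crux of the argument, whereas the auxiliary estimate $f(z+w)-f(z)\le G(w)$ is routine and could equally well be recorded alongside the lemma preceding the Proposition.
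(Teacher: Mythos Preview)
Your proof is correct and rests on the same two ingredients as the paper's: the pointwise bound $f(z+w)-f(z)\le G(w)$ and the assumed oscillation Jensen inequality, combined to feed into Lemma~\ref{hb}. The only difference is the order in which you chain them. You apply Jensen to the \emph{translated} integrand $g(z)=f(z+t\overline{\nu}^\infty)$ to get $f(\overline{\nu}^0+t\overline{\nu}^\infty)\le\int f(z+t\overline{\nu}^\infty)\,\dd\nu^0(z)$, and then integrate the pointwise bound against $\nu^0$. The paper instead applies the pointwise bound at the single point $z=\overline{\nu}^0$ first, obtaining $F(\overline{\nu}^0+t\overline{\nu}^\infty)\le F(\overline{\nu}^0)+tG(\overline{\nu}^\infty)$, and then invokes the hypothesis for $F$ itself. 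This shortcut avoids your ``translation trick'' entirely: there is no need to pass to $g$ or to integrate the pointwise bound, since evaluating it at one point already suffices. Your careful remarks about upgrading from $F\in\HH$ to general $f$ of linear growth via Lemma~\ref{approximationlemma} are sound and implicit in the paper's terser write-up.
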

This follows from the fact that for all $z\in \mathbb{V}$ we have
$$
 F(z+t\overline{\nu}^\infty)\leq F(z)+tG(\overline{\nu}^\infty),
$$
where we substitute $z=\overline\nu^0$ and use the assumed Jensen inequalities to obtain \eqref{testing}.

\bigskip

\noindent
\small {Mathematical Institute, University of Oxford, Andrew Wiles Building\\ 
Radcliffe Observatory Quarters, Woodstock Road, Oxford OX2 6GG\\ 
United Kingdom}
\\
\\
\small{\noindent Max-Planck-Institut f\"ur Mathematik in den Naturwissenschaften\\
Inselstra\ss e 22, Leipzig, 04103\\
Germany}

\end{document}